\documentclass[a4paper,10pt]{article}
\usepackage{amsmath,amssymb,amsfonts,amsthm} % 引入AMS的数学、符号、字体和定理宏包
\usepackage{mathrsfs} % 引入花体字母宏包
\usepackage{enumerate} % 用于创建自定义的列表环境
\usepackage{cases} % 用于创建多行方程中的分段函数
\usepackage{float} % 提供更多控制浮动对象的位置的选项
\usepackage{graphicx} % 引入图形宏包 Include this line if your document contains figures,
\usepackage{subfig}
\allowdisplaybreaks[4] % 允许多行公式在页面之间断开
\usepackage{wrapfig}  % 允许图形和文字环绕
\usepackage{url} % 支持URL格式
\usepackage[usenames]{color} % 引入颜色宏包，并使用预定义的颜色名称
\usepackage{setspace} % 设置行间距
\usepackage{booktabs}
\def\b1{\mbox{\boldmath $1$}} % 定义加粗的数字1

\makeatletter
\newcommand{\Biggg}{\bBigg@{3.5}} % 定义一个新的大尺寸符号
\makeatother

\theoremstyle{plain} % 设置定理样式为plain
\newtheorem{theorem}{\bf Theorem}[section] % 定义新的定理环境，编号基于章节
\newtheorem{corollary}{\bf Corollary}[section] % 定义新的推论环境，编号与定理共享
\newtheorem{lemma}{\bf Lemma}[section] % 定义新的引理环境，编号与定理共享
\newtheorem{proposition}{\bf Proposition}[section] % 定义新的命题环境，编号与定理共享
\newtheorem{definition}{\bf Definition}[section] % 定义新的定义环境，编号与定理共享
\newtheorem{remark}{\bf Remark}[section]  % 定义新的备注环境，编号与定理共享
\makeatother
\allowdisplaybreaks %  允许多行公式在页面之间断开.

\usepackage[numbers,sort&compress]{natbib} % 引入natbib宏包，使参考文献连续编号的引用以压缩方式出现
\usepackage[bookmarksnumbered, bookmarksopen,colorlinks,citecolor=blue,linkcolor=blue]{hyperref} % 引入超链接宏包，并设置超链接属性

\begin{document}
\date{}

\title{Stochastic linear-quadratic control problem for regime-switching jump-diffusion system and its application to finance}
% \thanks{Fan Wu is supported by the Natural Science Foundation of Anhui Province (Grant No. 2508085QA026) and the Provincial Natural Science Research Project of Anhui Colleges (Grant No. 2025AHGXZK30544). Xin Zhang is supported by the National Natural Science Foundation of China (Grant Nos. 12171086, 12371472), Tianyuan Fund for Mathematics grant 12426652, and Jiangsu Provincial Scientific Research Center of Applied Mathematics grant BK20233002. Xun Li is supported by the Research Grants Council of Hong Kong under grants 15221621, 15226922 and 15225124, PolyU 1-ZVXA, and 4-ZZLT.  Jie Xiong is supported by National Key R\&D Program of China (Grant No. 2022YFA1006102), and the National Natural Science Foundation of China (Grant No. 12471418).}
\author{Fan Wu \thanks{School of Big Data and Statistics, Anhui University, Hefei 230601, China; Email: wfyy121107@163.com}\quad
Xun Li \thanks{Department of Applied Mathematics, The Hong Kong Polytechnic University, Hong Kong, China; Email: li.xun@polyu.edu.hk}\quad
Jie Xiong \thanks{Department of Mathematics and SUSTech International Center for Mathematics, Southern University of Science and Technology, Shenzhen, Guangdong, 518055, China; E-mail: xiongj@sustech.edu.cn}\quad
Xin Zhang \thanks{Corresponding Author: School of Mathematics, Southeast University, Nanjing 211189, China; E-mail: x.zhang.seu@gmail.com}}
\maketitle

\noindent{\bf Abstract: }  This paper investigates a stochastic linear-quadratic (SLQ) control problem for a regime-switching jump-diffusion system. Unlike traditional regime-switching diffusion systems that couple a diffusion process with a Markov chain, we incorporate the jumps of the Markov chain into the state equation. This modeling methodology effectively captures potential \emph{gains} or \emph{losses} of the system during the regime transitions. It should be noted that the introduction of Markov chain jumps into the state equation leads to increased complexity in the corresponding coupled differential Riccati equations (CDREs), thereby rendering the solvability of the control problem more challenging. Under the assumption that the cost functional is uniformly convex, we establish the unique solvability of the corresponding CDREs. Building upon this foundation, we derive a closed-loop representation for the unique open-loop optimal control. Finally, we apply our theoretical results to the mean-variance portfolio selection problem in a regime-switching financial market with Markov chain jumps and obtain its efficient frontier.
\medskip

\noindent{\bf Keywords:} Stochastic linear-quadratic; Regime-switching jump-diffusion system; Coupled differential Riccati equations; Open-loop optimal control; Mean-variance portfolio selection.

\section{Introduction}\label{section-1}

Let $(\Omega,\mathcal{F},\mathbb{F},\mathbb{P} )$ be a complete filtered probability space on which a one-dimensional standard Brownian motion $W(\cdot)$ and a continuous time, irreducible Markov chain $\alpha(\cdot)$ are defined with $\mathbb{F}=\{\mathcal{F}_{t}\}_{t\geq 0}$ being its natural filtration augmented by all $\mathbb{P}$-null sets in $\mathcal{F}$. Throughout this paper, we denote the state space of the Markov chain $\alpha(\cdot)$ as $\mathcal{S}:=\left\{1,2,...,L\right\}$, where $L$ is a finite natural number. The generator of the Markov chain $\alpha(\cdot)$  under $\mathbb{P}$ is defined as $\pi(t):=\left[\pi_{ij}(t)\right]_{i,j=1,2,...,L}$. Here,  for $i\neq j$,  $\pi_{ij}(t)\geq 0$ is the bounded determinate transition intensity of the chain from state $i$ to state $j$ at time $t$ and $\sum_{j=1}^{L}\pi_{ij}(t)=0$ for any fixed $i$. In the following, let $N_{j}(t)$ be the number of jumps  into state $j$ up to time $t$ and set
$$
\begin{array}{c}
\widetilde{N}_{j}(t)=N_{j}(t)-\int_{0}^{t}\lambda_{j}(s)ds\quad \text{with} \quad \lambda_{j}(s)=\pi_{\alpha(s-)j}(s)I_{\{\alpha(s-)\neq j\}}%=\sum_{i\neq j}^{L}\pi_{ij}(s).
\end{array}
$$
Then for each $j\in\mathcal{S}$, the term $\widetilde{N}_{j}(t)$ is an $\left(\mathbb{F},\mathbb{P}\right)$-martingale.

Let $\mathcal{P}$ be the $\mathbb{F}$ predictable $\sigma$-field on $[0,T]\times\Omega$, and we write $\varphi(\cdot) \in \mathcal{P}$  (respectively, $\varphi (\cdot) \in \mathbb{F}$) if the stochastic process $\varphi(\cdot)$ is $\mathcal{P}$-measurable (respectively, $\mathbb{F}$-progressively measurable). Then, for any Euclidean space $\mathbb{H}$ and $t\in[0,T]$, we introduce the following spaces:
\begin{align*}
&C(t, T; \mathbb{H}) =\big\{\varphi:[t, T] \rightarrow \mathbb{H} \text{ }\big|\text{ } \varphi(\cdot) \text { is continuous }\big\}, \\
&L^{\infty}(t, T ; \mathbb{H}) =\big\{\varphi:[t, T] \rightarrow \mathbb{H} \text{ }\big|\text{ } \operatorname{esssup}_{s \in[t, T]}| \varphi(s)|<\infty\big\},\\
&L_{\mathcal{F}_{T}}^{2}(\Omega ; \mathbb{H}) =\big\{\xi: \Omega \rightarrow \mathbb{H} \text{ }\big|\text{ } \xi \text { is } \mathcal{F}_{T} \text {-measurable, } \mathbb{E}|\xi|^{2}<\infty\big\},\\
&\mathcal{S}_{\mathbb{F}}^{2}(t, T ; \mathbb{H}) =\big\{\varphi:[t, T] \times \Omega \rightarrow \mathbb{H} \text{ }\big|\text{ }\varphi(\cdot) \in \mathbb{F} \text {, } \mathbb{E}\big[\sup _{s \in[t, T]}|\varphi(s)|^{2}\big]<\infty\big\}, \\
&L_{\mathbb{F}}^{2}(t, T ; \mathbb{H}) =\big\{\varphi:[t, T] \times \Omega \rightarrow \mathbb{H} \text{ }\big|\text{ } \varphi(\cdot) \in \mathbb{F}\text{, } \mathbb{E} \int_{t}^{T}|\varphi(s)|^{2} ds<\infty\big\}, \\
&L_{\mathcal{P}}^{2}\left(t, T ; \mathbb{H}\right)=\big\{\varphi:[t, T] \times \Omega \rightarrow \mathbb{H} \text{ }\big|\text{ } \varphi(\cdot) \in \mathcal{P}\text{, } \mathbb{E} \int_{t}^{T}|\varphi(s)|^{2} ds<\infty\big\}.
\end{align*}

In this paper, we will study an SLQ optimal control problem with the following state equation and cost functional:
 \begin{equation}\label{state}
   \left\{
   \begin{array}{l}
   dX(s)=\left[A(s,\alpha(s))X(s)+B(s,\alpha(s))u(s)+b(s)\right]ds\\[2mm]
   \qquad\qquad+\left[C(s,\alpha(s))X(s)+D(s,\alpha(s))u(s)+\sigma(s)\right]dW(s)\\[2mm]
   \qquad\qquad+\sum_{j=1}^{L}\left[E_{j}(s,\alpha(s-))X(s-)+F_{j}(s,\alpha(s-))u(s)+\gamma_{j}(s)\right]d\widetilde{N}_{j}(s),\qquad s\in[t,T],\\[2mm]
   X(t)=x,\quad \alpha(t)=i,
   \end{array}
   \right.
 \end{equation}
 and
 \begin{equation}\label{cost}
\begin{aligned}
   J(t,x,i;u(\cdot))
   \triangleq \mathbb{E}\Big\{&\int_{t}^{T}\left[
    \left<
    \left(
    \begin{matrix}
    Q(s,\alpha(s)) & S(s,\alpha(s))^{\top}  \\
    S(s,\alpha(s)) & R(s,\alpha(s))
    \end{matrix}
    \right)
    \left(
    \begin{matrix}
    X(s) \\
    u(s)
    \end{matrix}
   \right),
    \left(
    \begin{matrix}
    X(s) \\
    u(s)
    \end{matrix}
    \right)
    \right>\right.\\
   &\left. +2\left<
    \left(
    \begin{matrix}
    q(s) \\
    \rho(s)
    \end{matrix}
   \right),
    \left(
    \begin{matrix}
    X(s) \\
    u(s)
    \end{matrix}
    \right)
   \right>
   \right]ds
   +\big<G(T,\alpha(T))X(T)+2g, X(T)\big>\Big\}.
  \end{aligned}
\end{equation}
In the above, $X(\cdot)$ is called the \emph{state process}  with initial value $(x,i)\in\mathbb{R}^{n}\times\mathcal{S}$, and $u(\cdot)$ is called the \emph{control process}, which belongs to the Hilbert space  $\mathcal{U}[t,T]\triangleq L_{\mathcal{P}}^{2}\left(t, T ; \mathbb{R}^{m}\right)$. Throughout this paper, we suppose  that the coefficients of the state equation \eqref{state} and cost functional \eqref{cost} satisfy the following assumptions:

 \textbf{(A1).} For any fixed $i,\text{ }j\in\mathcal{S}$,
 $$
 \begin{aligned}
 &A(\cdot,i), \text{ }C(\cdot,i), \text{ } E_{j}(\cdot,i)\in L^{\infty}(0,T;\mathbb{R}^{n\times n}),\quad B(\cdot,i),\text{ }D(\cdot,i),\text{ }F_{j}(\cdot,i)\in L^{\infty}( 0,T; \mathbb{R}^{n\times m}),\\
  &b(\cdot),\text{ }\sigma(\cdot)\in L_{\mathbb{F}}^{2}(0,T;\mathbb{R}^{n}),\quad
  \gamma_{j}(\cdot)\in L_{\mathcal{P}}^{2}(0,T;\mathbb{R}^{n}).
  \end{aligned}
  $$

 \textbf{(A2).} For any $i\in\mathcal{S}$,
\begin{align*}
&Q(\cdot,i)\in L^{\infty}(0,T;\mathbb{S}^{n}),\quad S(\cdot,i)\in L^{\infty}(0,T;\mathbb{R}^{m\times n }),\quad R(\cdot,i)\in L^{\infty}( 0,T; \mathbb{S}^{m}),\quad G(T,i)\in\mathbb{S}^{n},\\
&q(\cdot)\in L_{\mathbb{F}}^{2}(0,T;\mathbb{R}^{n}),\quad
\rho(\cdot)\in L_{\mathbb{F}}^{2}(0,T;\mathbb{R}^{m}),\quad
 g\in L_{\mathcal{F}_{T}}^{2}(\Omega,\mathbb{R}^{n})
\end{align*}
  Then, by Situ  \cite[Theorem 117]{situ2005theory}, for  given initial value $(t,x,i)\in[0,T]\times\mathbb{R}^{n}\times\mathcal{S}$ and control $u(\cdot)\in\mathcal{U}[t,T]$, the state equation \eqref{state} admits a unique solution $X(\cdot;t,x,i,u)\in\mathcal{S}_{\mathbb{F}}^{2}(t,T;\mathbb{R}^{n})$. Consequently, the cost functional is well-posed for any given $(t,x,i,u(\cdot))\in[0,T]\times\mathbb{R}^{n}\times\mathcal{S}\times \mathcal{U}[t,T]$. Based on the above notation, we summarize the SLQ optimal control problem for Markov regime-switching jump-diffusion systems as follows.

\noindent\textbf{Problem (M-SLQ):} For any given $(t,x,i)\in [0,T]\times\mathbb{R}^{n}\times \mathcal{S}$, find a $u^{*}(\cdot)\in\mathcal{U}[t,T]$ such that
\begin{equation}\label{value}
J\left(t,x,i;u^{*}(\cdot)\right)=\inf_{u(\cdot)\in \mathcal{U}[t,T]}J\left(t,x,i;u(\cdot)\right)\triangleq V(t,x,i).
\end{equation}
The function $V(\cdot,\cdot,\cdot)$ is called the value function of Problem (M-SLQ). If $b(\cdot)=\sigma(\cdot)=q(\cdot)=0$, $\rho(\cdot)=0$, $g=0$ and $\gamma_{j}(\cdot)=0$ for $j\in\mathcal{S}$, the corresponding state process, performance functional,  value function and problem are denoted by  $X^{0}(\cdot;t, x, i,u)$, $J^{0}(t, x, i;u)$, $V^{0}(\cdot,\cdot,\cdot)$, Problem (M-SLQ)$^0$,  respectively.

Linear-quadratic (LQ) control theory, as the most mature fundamental subfield within the system control theory, constitutes a vital cornerstone of modern control theory. Its concepts, methods, and results have exerted a profound influence on the development of other branches. Research on LQ control problems traces back to the seminal works of Kalman \cite{kalman1960contributions} and Letov \cite{Letov1961analytical}, with early studies primarily focused on deterministic control problems where state dynamics are governed by linear ordinary differential equations. In this context, the positive semi-definiteness of the quadratic control coefficient $R$ in the cost functional constitutes a necessary condition for the well-posedness of the LQ control problem. Notably, when $R$ is uniformly positive definite, the LQ control problem admits a complete solution via the Riccati equation under relatively weak conditions (see Yong and Zhou \cite{yong-zhou-2012-stochastic-control-theory}). In 1968, Wonham \cite{Wonham.W.M.1968} pioneered the study of SLQ control problems, which subsequently gained significant research attention due to their broad applications in engineering, finance, insurance, and economics. Influenced by deterministic LQ research, early investigations of SLQ problems predominantly assumed the weighting coefficients in the performance index satisfied \emph{standard conditions}. Under these assumptions, optimal controls for diffusion-based SLQ problems could be expressed as linear state feedback forms through solutions to Riccati equations, with the solvability of these equations being verifiable under standard conditions (see the monograph Yong and Zhou \cite{yong-zhou-2012-stochastic-control-theory}). Chen et al. \cite{Chen.S.P.1998_ILQ} discovered in their study of diffusion-based SLQ control that the quadratic control coefficient $R$ could even be negative definite when the control variable enters the diffusion term. In such cases, the failure of standard conditions renders the solvability of the associated Riccati equation a significant theoretical challenge. The authors established the solution under specific circumstances and subsequently constructed the optimal feedback representation. Following this breakthrough, numerous researchers have advanced LQ control theory by examining SLQ problems under diverse modeling frameworks. Below, we outline only those contributions most relevant to the present study.

Sun et al.  \cite{Sun-Li-Yong-2016} systematically investigated the indefinite SLQ control problem for diffusion systems over a finite time horizon, providing an in-depth analysis of the open-loop solvability and closed-loop solvability of the control problem. Their research demonstrates that the open-loop solvability of the control problem is equivalent to the cost functional satisfying a convexity condition and the solvability of the optimality system (a coupled system of forward-backward stochastic differential equations, FBSDEs). Conversely, the closed-loop solvability is equivalent to the existence of a regular solution to the corresponding differential Riccati equation. Notably, under the condition that the cost functional is uniformly convex (a requirement strictly weaker than the standard condition), the authors constructed a strongly regular solution to the differential Riccati equation using an iterative method based on solutions to Lyapunov equations. Inspired by these developments, Zhang et al. \cite{Zhang-Li-Xiong-2021} recently studied the indefinite SLQ control problem within regime-switching systems, providing a detailed discussion of open-loop and closed-loop solvability. Recently, Alia and Alia \cite{alia_time-inconsistent_2024} incorporated both Markovian jumps and Poisson jumps into the state equation while studying SLQ control for regime-switching systems. However, the author did not introduce a control variable in the associated Markovian jumps term (that is to say, the corresponding state coefficients in \cite{alia_time-inconsistent_2024} satisfy $F_{j}(\cdot,i)\equiv 0$ for all $i,\,j\in\mathcal{S}$).
%A significant observation is that constructing a closed-loop optimal strategy now requires solving a system of coupled differential Riccati equations (CDREs), whose solvability presents greater complexity than that of a single Riccati equation.
Additional research on SLQ control problems within regime-switching systems can be found in works such as Zhang and Yin \cite{Zhang.Q.1999_LQG}, Li and Zhou \cite{li_indefinite_2002}, and Wen et al. \cite{wen_weak_2021,Wen_2023}.

%{\color{red}
However, in the existing literature on SLQ problems based on Markov regime-switching systems, the state equation is commonly simplified to a diffusion system modulated by a Markov chain:
\begin{equation}\label{MRSDS}
\left\{
\begin{aligned}
dX(s)&=\left[A(s,\alpha(s))X(s)+B(s,\alpha(s))u(s)+b(s)\right]ds\\
&\quad+\left[C(s,\alpha(s))X(s)+D(s,\alpha(s))u(s)+\sigma(s)\right]dW(s),\\
 X(t)&=x,\quad \alpha(t)=i.
\end{aligned}
\right.
\end{equation}
Although the systems \eqref{MRSDS} can effectively describe the dynamic behavior of the state process switching between different modes (represented by different states of the Markov chain), it cannot capture abrupt jumps such as \emph{losses} or \emph{gains} that occur during mode switching since the system \eqref{MRSDS} remains a continuous stochastic system. In this paper, we introduce the following Markovian jumps into the state equation
\begin{equation}\label{Markovian-jumps}
\sum_{j=1}^{L}\left[E_{j}(s,\alpha(s-))X(s-)+F_{j}(s,\alpha(s-))u(s)+\gamma_{j}(s)\right]d\widetilde{N}_{j}(s).
\end{equation}
Hence, we can characterize both \emph{mode switching} and \emph{switching losses/gains} features of the control system. Obviously, when the jump coefficients in the Markov regime-switching jump-diffusion system satisfy the following condition:
$$
E_{j}(\cdot,i)=0,\quad F_{j}(\cdot,i)=0,\quad \gamma_{j}(\cdot)=0,\quad\forall i,\, j\in\mathcal{S},
$$
the Markov regime-switching jump-diffusion system reduces to the classical diffusion system modulated by a Markov chain. Therefore, the Markov regime-switching jump-diffusion system \eqref{state} can be regarded as a natural extension of the diffusion system \eqref{MRSDS} modulated by a Markov chain, and it is suitable for modeling more general stochastic optimal control problems.

 It is worthy to note that introducing Markovian jumps into the state equation has significant practical value. In application contexts, for stochastic optimal control problems with regime switching, the state process may typically experience jump-like mutations at the moment of a regime switch. Such jump sizes can be interpreted as the losses or gains of the state process incurred by the regime transition. Consider financial market modeling as an example. While the classical diffusion system modulated by a Markov chain can effectively capture the evolution of asset prices under different market trends (see, e.g., Zhou and Yin \cite{Zhou-2003-MV}, Shen \cite{Shen-2024-optimal}, Zhang et al. \cite{Zhang-2010-portfolio}), their inherent continuity as stochastic systems renders them incapable of modeling the losses or gains incurred during trend transitions. Extensive empirical evidence indicates that major events, including policy announcements, credit defaults, black swan events, and market crashes, usually coincide with market trend transitions and lead to instantaneous jumps in asset prices and portfolio values. Savku and Weber \cite{savku2022stochastic} and Shi and Xu \cite{shi2025optimal} observed the jump phenomenon of financial assets during market regime switching when discussing portfolio problems in financial markets. Accordingly, they incorporated Markovian jumps into the price equation for risky assets, thereby characterizing the losses and gains in risky asset prices accompanying market trend shifts.
In addition, Song and Wu \cite{song2022general} incorporated Markovian jumps into the state equation when studying the maximum principle for stochastic optimal control problems governed by Markov regime-switching systems, and established the maximum principle for general stochastic optimal control problems. Further studies on Markov regime-switching jump-diffusion systems in areas including queueing theory and financial engineering can be found in Azam et al. \cite{A} and Shu et al. \cite{B}. It should be noted that although a Poisson jump-diffusion system modulated by a Markov chain can describe the jump behavior of the state process, its jump term is driven by an exogenous Poisson process and thus cannot capture the discontinuous impact on the state process caused by the intrinsic regime switching of the control system.

Introducing the Markovian jumps into the state equation also poses new challenges for solving Problem (M-SLQ). In the study of SLQ control problems with state equation \eqref{MRSDS}, since the Markovian jumps \eqref{Markovian-jumps} are not introduced into the state equation, although the corresponding adjoint equation itself is a backward stochastic differential equation (BSDE) driven by both the Markovian jumps and Brownian motion, the variable $\Gamma_{j}(\cdot)$ characterizing the jump size does not appear in the generator of the corresponding adjoint equation and the corresponding stationary condition (see the second and third equations in \eqref{optimality-system-zhang}). By introducing the Markovian jumps \eqref{Markovian-jumps} into the state equation, this paper extends the corresponding adjoint equation to a more general class of BSDE, whose generator explicitly includes the variable corresponding to the jump size. This structural change significantly complicates the decoupling process of the optimality system for the SLQ control problem, which typically requires the regularity solution of a more complex Riccati equation. As we can see from the Remark \ref{rmk-2}, compared with the CDREs \eqref{CDREs-2} derived from the classical SLQ control problem with state equation \eqref{MRSDS}, the CDREs \eqref{CDREs} derived in this paper is highly coupled, and its cannot be directly addressed by the classical dimension-augmentation approach. In studying the SLQ control problem with Markovian jumps, Alia and Alia \cite{alia_time-inconsistent_2024}
%incorporated both Markovian jumps and Poisson jumps into the state equation while studying SLQ control for regime-switching systems. However, the author did not introduce control variable in the associated Markovian jumps term (that is to say, the corresponding state coefficients in \cite{alia_time-inconsistent_2024} satisfy $F_{j}(\cdot,i)\equiv 0$ for all $i\,j\in\mathcal{S}$), and they
explicitly acknowledged that the solvability of the corresponding stochastic differential Riccati equation becomes highly challenging when the control variable affects the Markov chain jump component (see Remark 2.10 in  \cite{alia_time-inconsistent_2024}). In studying the maximum principle for general stochastic optimal control problems, Zhang et al. \cite{Zhang-Elliott-Siu-2012} also introduces the Markovian jumps into the state equation. However, when discussing the mean-variance portfolio problem (a special case of the SLQ control problem) in financial markets, they only consider Poisson jumps modulated by a Markov chain in the asset price, without further introducing Markovian jumps.  All these facts reflect that adding controlled Markovian jumps \eqref{Markovian-jumps} to the state equation will bring certain difficulties and challenges to the solvability of Problem (M-SLQ).

In light of these considerations, this paper investigates an SLQ control problem within the framework of regime-switching jump-diffusion systems. Specifically, we postulate that both the parameters of the state equation and the state process itself undergo abrupt jumps concomitant with the Markovian regime transitions (see \eqref{state}). Unlike classical regime-switching diffusion systems \eqref{MRSDS}, these jumps in the state process during regime transitions can be interpreted as direct gains or losses induced by the regime switch to the system. The main contributions of this paper can be concluded as follows:
\begin{enumerate}
\item We model the SLQ control problem, incorporating both mode switching and switching losses/gains, using a regime-switching jump-diffusion system. Under the uniform convexity condition of the cost functional, the unique open-loop solvability of the control problem is established. Our results extend the study \cite{Zhang-Li-Xiong-2021} of the SLQ control problem for the regime-switching jump-diffusion system.
\item We establish the equivalence between the uniform convexity condition \eqref{uniformly-convex-condition} of the cost functional and the unique strongly regular solvability of corresponding CDREs \eqref{CDREs}. Based on this result, we provide a closed-loop representation for open-loop optimal control and an explicit expression for the value function.
\item We apply our findings to a mean-variance portfolio selection problem. Under the assumption that the risky asset prices follow a regime-switching jump-diffusion model, we successfully derive the efficient frontier for the portfolio problem based on the derived results. From this perspective, our findings develop the results in Zhou and Yin  \cite{Zhou-2003-MV}, which studied the mean-variance portfolio selection problem under the assumption that the risky asset prices follow a regime-switching diffusion model.
\end{enumerate}

The rest of the paper is organized as follows. %Section \ref{section-2} introduces the basic notations and formulates the problem studied in this paper.
In Section \ref{section-3}, we characterize the cost functional from the Hilbert space point of view. Section \ref{section-4} studies the open-loop solvability in terms of the optimality system and the convexity condition for the cost functional. Section \ref{section-5} aims to investigate the closed-loop representation for open-loop optimal control based on the solution to a system of CDREs, whose solvability is studied in  Section \ref{section-6}. Finally, Section \ref{section-7} provides an application to the mean-variance portfolio selection problem, shedding light on how to employ the obtained results.
\section{Representation of the cost functional}\label{section-3}
We begin this section by introducing some useful notations besides those introduced in the previous section. We denote $\mathbb{R}^{n \times m}$ as the Euclidean space of all $n \times m  $ matrices and $\mathbb{R}^{n}$ as the $\mathbb{R}^{n \times 1}$ for simplicity. In addition, the set of all $n\times n$ symmetric matrices is denoted by $\mathbb{S}^{n}$. Specially, the sets of all $ n\times n$ semi-positive definite matrices and positive definite matrices are denoted by $\overline{\mathbb{S}_{+}^n}$ and $\mathbb{S}_{+}^n$, respectively. We suppose that the Hilbert space $\mathbb{R}^{n\times m}$ is equipped with the Frobenius inner product $<M,N>\triangleq tr(M^{\top}N)$ for any $M,\, N\in\mathbb{R}^{n \times m}$, where $M^{\top}$ is the transpose of $M$ and $tr(M^{\top}N)$ is the trace of $M^{\top}N$. The norm introduced by the Frobenius inner product is denoted by $|\cdot|$. The identity matrix of size $n$ is denoted by $I_{n}$, which is often written as $I$ when no confusion occurs. For any $M, N \in \mathbb{S}^n$, we write $M \geqslant N$ (respectively, $M>N$) if $M-N$ is semi-positive definite (respectively, positive definite). Specially, for an $\mathbb{S}^n$-valued measurable function $F(\cdot)$ on $[0, T]$, we write
$$\left\{
\begin{array}{lll}
F(\cdot) \geqslant 0 & \text { if } \quad F(s) \geqslant 0, & \text { a.e. } s \in[0, T], \\
F(\cdot)>0 & \text { if } \quad F(s)>0, & \text { a.e. } s \in[0, T], \\
F(\cdot) \gg 0 & \text { if } \quad F(s) \geqslant \delta I_n, & \text { a.e. } s \in[0, T], \quad \text { for some } \delta>0 .
\end{array}
\right.$$
Moreover, we use $F(\cdot)\leq 0$, $F(\cdot)<0$ and $F(\cdot)\ll 0$ to indicated that $-F(\cdot)\geq 0$, $-F(\cdot)>0$ and $-F(\cdot)\gg 0$, respectively. For any given Banach space $\mathbb{B}$, we denote
$$\mathcal{D}\left(\mathbb{B}\right)=\left\{\mathbf{\Lambda}(\cdot)=\left(\Lambda(\cdot,1),\cdots,\Lambda(\cdot,L)\right) \mid \Lambda(\cdot,i) \in \mathbb{B}\text{, } \forall i\in \mathcal{S}\right\}.$$

Now, we  consider the following FBSDEs:
\begin{equation}\label{FBSDE}
      \left\{
      \begin{array}{l}
      dX(s)=\left[A(s,\alpha(s))X(s)+B(s,\alpha(s))u(s)+b(s)\right]ds\\[2mm]
      \qquad\qquad +\left[C(s,\alpha(s))X(s)+D(s,\alpha(s))u(s)+\sigma(s)\right]dW(s)\\[2mm]
      \qquad\qquad+\sum_{j=1}^{L}\left[E_{j}(s,\alpha(s-))X(s-)+F_{j}(s,\alpha(s-))u(s)
      +\gamma_{j}(s)\right]d\widetilde{N}_{j}(s),\\[2mm]
      dY(s)=-\big[A(s,\alpha(s))^{\top}Y(s)+C(s,\alpha(s))^{\top}Z(s)+\sum_{j=1}^{L}\lambda_{j}(s) E_{j}(s,\alpha(s))^{\top}\Gamma_{j}(s)\\[2mm]
      \qquad\qquad+ Q(s,\alpha(s))X(s)+S(s,\alpha(s))^{\top}u(s)+q(s)\big]ds+Z(s)dW(s)+\sum_{j=1}^{L}\Gamma_{j}(s) d\widetilde{N}_{j}(s),\\[2mm]
      X(t)=x,\quad \alpha(t)=i,\quad Y(T)=G(T,\alpha(T)) X(T)+g,\quad s\in[0,T].
      \end{array}
      \right.
    \end{equation}
To emphasize the relationship between the solution of the FBSDEs \eqref{FBSDE} and the specified initial state $(t,x,i)\in[0,T]\times\mathbb{R}^{n}\times \mathcal{S}$ along with the control $u(\cdot)\in\mathcal{U}[t,T]$, %To emphasize the connection between the solution of above FBSDEs and the given condition $(t,x,i,u(\cdot))\in[0,T]\times\mathbb{R}^{n}\times \mathcal{S}\times\mathcal{U}[t,T]$,
we denote the solution to \eqref{FBSDE} by $$\left(X(\cdot;t,x,i,u),Y(\cdot;t,x,i,u),Z(\cdot;t,x,i,u),\mathbf{\Gamma}(\cdot;t,x,i,u)\right),$$
with
$$\mathbf{\Gamma}(\cdot;t,x,i,u)\triangleq\left[\Gamma_{1}(\cdot;t,x,i,u),\Gamma_{2}(\cdot;t,x,i,u),
\cdots,\Gamma_{L}(\cdot;t,x,i,u)\right].$$
Additionally, if $b(\cdot)=\sigma(\cdot)=q(\cdot)=0$, $\rho(\cdot)=0$, $g=0$ and $\gamma_{j}(\cdot)=0$ for $j\in\mathcal{S}$, we denote the associated solution to FBSDEs \eqref{FBSDE} as
$$
\left(X^{0}(\cdot;t,x,i,u),Y^{0}(\cdot;t,x,i,u),Z^{0}(\cdot;t,x,i,u),\mathbf{\Gamma}^{0}(\cdot;t,x,i,u)\right).
$$

Based on the above notations, we can provide a representation of the cost functional \eqref{cost} from the Hilbert space point of view.
\begin{proposition}\label{prop-cost-representation}
Let assumptions (A1)-(A2) hold. Then, for any $(t,x,i,u(\cdot))\in[0,T]\times\mathbb{R}^{n}\times \mathcal{S}\times\mathcal{U}[t,T]$, the cost functional \eqref{cost} can be represented as
\begin{equation}\label{cost-representation}
\begin{aligned}
&J^{0}(t,x,i;u(\cdot))=\big<M_{2}(t,i)u,u\big>+2\big<M_{1}(t,i)x,u\big>+\big<M_{0}(t,i)x,x\big>,\\
&J(t,x,i;u(\cdot))=\big<M_{2}(t,i)u,u\big>+2\big<M_{1}(t,i)x,u\big>+\big<M_{0}(t,i)x,x\big>
+2\big<\nu_{t}^{i},u\big>+2\big<y_{t}^{i},x\big>+c_{t}^{i},
\end{aligned}
\end{equation}
where
\begin{align*}
\big[M_{2}(t,i)u\big](s)&=B(s,\alpha(s))^{\top}Y^{0}(s;t,0,i,u)+D(s,\alpha(s))^{\top}Z^{0}(s;t,0,i,u)\\
&\quad+\sum_{j=1}^{L}\lambda_{j}(s) F_{j}(s,\alpha(s))^{\top}\Gamma_{j}^{0}(s;t,0,i,u)+S(s,\alpha(s))X^{0}(s;t,0,i,u)+R(s,\alpha(s))u(s),\\
\big[M_{1}(t,i)x\big](s)&=B(s,\alpha(s))^{\top}Y^{0}(s;t,x,i,0)+D(s,\alpha(s))^{\top}Z^{0}(s;t,x,i,0)\\
 &\quad+\sum_{j=1}^{L}\lambda_{j}(s) F_{j}(s,\alpha(s))^{\top}\Gamma_{j}^{0}(s;t,x,i,0)+S(s,\alpha(s))X^{0}(s;t,x,i,0),\\
M_{0}(t,i)x&=\mathbb{E}\left[Y^{0}(t;t,x,i,0)\right],\\
\nu_{t}^{i}(s)&=B(s,\alpha(s))^{\top}Y(s;t,0,i,0)+D(s,\alpha(s))^{\top}Z(s;t,0,i,0)\\
&\quad+\sum_{j=1}^{L}\lambda_{j}(s) F_{j}(s,\alpha(s))^{\top}\Gamma_{j}(s;t,0,i,0)+S(s,\alpha(s))X(s;t,0,i,0)+\rho(s),\\
y_{t}^{i}&=\mathbb{E}\left[Y(s;t,0,i,0)\right],\\
c_{t}^{i}&=\mathbb{E}\Big\{\int_{t}^{T}\big<Q(s,\alpha(s))X(s;t,0,i,0)+2q(s),X(s;t,0,i,0)\big>ds\\
&\qquad\quad +\big<G(T,\alpha(T))X(T;t,0,i,0)+2g,X(T;t,0,i,0)\big>\Big\}.
\end{align*}
\end{proposition}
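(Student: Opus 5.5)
The plan is the standard Hilbert-space argument of Sun--Li--Yong, adapted to the Markovian-jump setting. The only new ingredient is a duality (Itô product) identity that accounts for the compensated Markov-chain jumps.

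\emph{Step 1: linear decomposition.} By linearity of the state equation \eqref{state} and of the backward equation in \eqref{FBSDE}, for every $(t,x,i,u)$ we have
\[
X(\cdot;t,x,i,u)=X^{0}(\cdot;t,x,i,0)+X^{0}(\cdot;t,0,i,u)+X(\cdot;t,0,i,0).
\]
The triple $(Y,Z,\mathbf{\Gamma})$ decomposes in the same way. Here the nonhomogeneous terms $b,\sigma,\gamma_j,q,g$ are carried entirely by the last piece. Substituting this into \eqref{cost} and expanding the quadratic form, $J$ becomes a sum of bilinear pairings between the three pieces, together with the linear terms $q,\rho,g$. It therefore suffices to identify each pairing with one of the operator expressions in the statement.

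\emph{Step 2: the duality identity.} Take any two controls $u,v$ with homogeneous states $X_1=X^{0}(\cdot;t,x_1,i,u)$ and $X_2=X^{0}(\cdot;t,x_2,i,v)$. Let $(Y_2,Z_2,\mathbf{\Gamma}_2)$ be the homogeneous adjoint of $X_2$; if a forcing term is present, include $q$ and $g$. Apply Itô's formula for semimartingales with jumps to $\langle X_1(s),Y_2(s)\rangle$ on $[t,T]$. The key bookkeeping is as follows:
\begin{itemize}
\item The common jump part gives $\sum_j[E_jX_1+F_ju]^\top\Gamma_{2,j}\,dN_j$.
\item Writing $dN_j=d\widetilde N_j+\lambda_j\,ds$, this yields the drift $\sum_j\lambda_j\langle E_jX_1+F_ju,\Gamma_{2,j}\rangle$.
\item The $E_j$ part cancels against the term $\sum_j\lambda_jE_j^\top\Gamma_j$ in the adjoint generator.
\end{itemize}
One also has to check that $\alpha(s-)$ may be replaced by $\alpha(s)$ inside $ds$-integrals, since they differ only on a countable set.

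Taking expectations, the stochastic integrals have zero mean: they are true martingales, using $X\in\mathcal S^2_{\mathbb F}$, $(Y,Z,\mathbf\Gamma)$ in the appropriate $L^2$ spaces, and boundedness of $\lambda_j$. This gives
\[
\mathbb E\langle G X_1(T),X_2(T)\rangle+\mathbb E\int_t^T\langle QX_2+S^\top v,X_1\rangle ds=\langle x_1,\mathbb EY_2(t)\rangle+\mathbb E\int_t^T\langle B^\top Y_2+D^\top Z_2+\textstyle\sum_j\lambda_jF_j^\top\Gamma_{2,j},u\rangle ds .
\]

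\emph{Step 3: assembling.} Apply this identity to each pairing from Step 1, adding the $\langle Su,X\rangle$ and $\langle Ru,u\rangle$ terms:
\begin{itemize}
\item The $(u,u)$ pairing gives $\langle M_2u,u\rangle$.
\item The cross terms between $x$ and $u$ give $2\langle M_1x,u\rangle$, after using symmetry of the identity in the two roles.
\item The $(x,x)$ pairing gives $\langle M_0x,x\rangle$, with $M_0x=\mathbb EY^0(t;t,x,i,0)$.
\item The pairings with the nonhomogeneous piece give $2\langle\nu_t^i,u\rangle+2\langle y_t^i,x\rangle$; I would read $y_t^i$ as $\mathbb EY(t;t,0,i,0)$.
\item The remaining self-pairing of $X(\cdot;t,0,i,0)$ is exactly $c_t^i$.
\end{itemize}

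Care is needed with the forcing terms $b,\sigma,\gamma_j$: they appear in the Itô expansion paired with the homogeneous adjoint. One must verify that they cancel or land in $c_t^i$. I would handle this by always putting the homogeneous process in the forward slot and the full (forced) adjoint in the backward slot. Setting the data to zero recovers the $J^0$ formula.

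I expect the main obstacle to be the jump term in Step 2. One has to correctly compute the quadratic covariation $[X_1,Y_2]$ for the Markov-chain jumps, and justify the martingale property of $\int\Gamma_j\,d\widetilde N_j$-type terms under only square integrability. A localization argument followed by dominated convergence should settle the latter.
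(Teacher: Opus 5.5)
Your proposal is correct and follows essentially the same route as the paper. You split the state into the $x$-part, the $u$-part and the forced part, then evaluate each pairing by applying It\^o's rule to $\langle Y, X\rangle$ with the homogeneous state in the forward slot, which is how the paper obtains $I_1,\dots,I_6$. Your reading $y_t^i=\mathbb{E}[Y(t;t,0,i,0)]$ is the right one; the paper's $\mathbb{E}[Y(s;t,0,i,0)]$ is a typo. Your treatment of the Markov-jump covariation and of the martingale property fills in details the paper leaves implicit.
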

\begin{proof}
  Observe that
   $$X^{0}(\cdot;t,x,i,u)=X^{0}(\cdot;t,0,i,u)+X^{0}(\cdot;t,x,i,0).$$
   Therefore, $J^{0}(t,x,i;u(\cdot))$ can be decomposed into
   \begin{equation*}
   J^{0}(t,x,i;u(\cdot))=I_{1}+2I_{2}+I_{3},
   \end{equation*}
   where
    \begin{align*}
      &I_{1}=\mathbb{E}\Big\{\int_{t}^{T}\Big[\big< Q(s,\alpha(s))X^{0}(s;t,0,i,u), X^{0}(s;t,0,i,u)\big>
      +2\big<S(s,\alpha(s))X^{0}(s;t,0,i,u),u(s)\big>\\
     & \qquad+\big< R(s,\alpha(s))u(s), u(s)\big>\Big] ds
      +\big< G(T,\alpha(T)) X^{0}(T;t,0,i,u), X^{0}(T;t,0,i,u)\big>\Big\},\\
      &I_{2}=\mathbb{E}\Big\{\int_{t}^{T}\Big[\big< Q(s,\alpha(s))X^{0}(s;t,x,i,0), X^{0}(s;t,0,i,u)\big>
      +\big<S(s,\alpha(s))X^{0}(s;t,x,i,0),u(s)\big>\Big] ds\\
      &\qquad+\big< G(T,\alpha(T)) X^{0}(T;t,x,i,0), X^{0}(T;t,0,i,u)\big>\Big\},\\
      &I_{3}=\mathbb{E}\Big\{\int_{t}^{T}\big< Q(s,\alpha(s))X^{0}(s;t,x,i,0), X^{0}(s;t,x,i,0)\big>ds\\
      &\qquad +\big< G(T,\alpha(T)) X^{0}(T;t,x,i,0), X^{0}(T;t,x,i,0)\big>\Big\}.
    \end{align*}
    Consequently, applying the It\^{o}'s rule to
    $\big<Y^{0}(s;t,0,i,u), X^{0}(s;t,0,i,u)\big>$, $\big<Y^{0}(s;t,x,i,0) ,X^{0}(s;t,0,i,u)\big>$,  and  $\big<Y^{0}(s;t,x,i,0) ,X^{0}(s;t,x,i,0)\big>$,
    respectively, we obtain
    \begin{align*}
      &I_{1}=\mathbb{E}\int_{t}^{T}
      \big< \left[M_{2}(t,i)u\right](s), u(s)\big> ds
      =\big< M_{2}(t,i)u, u\big>,\\
      &I_{2}=\mathbb{E}\int_{t}^{T}
      \big< \left[M_{1}(t,i)x\right](s), u(s)\big> ds
      =\big< M_{1}(t,i)x, u \big>,\\
      &I_{3}=\big<M_{0}(t,i)x, x\big>.
    \end{align*}

  Similarly, we can decompose $J(t,x,i;u(\cdot))$ as %into
  \begin{equation*}
  J(t,x,i;u(\cdot))=J^{0}(t,x,i;u(\cdot))+2I_{4}+2I_{5}+I_{6},
  \end{equation*}
  where
   \begin{align*}
     I_{4}&=\mathbb{E}\Big\{\int_{t}^{T}\Big[\big< Q(s,\alpha(s))X(s;t,0,i,0)+q(s), X^{0}(s;t,0,i,u)\big>\\
     &\quad +\big<S(s,\alpha(s))X(s;t,0,i,0)+\rho(s),u(s)\big>\Big] ds\\
     &\quad+\big< G(T,\alpha(T)) X(T;t,0,i,0)+g, X^{0}(T;t,0,i,u)\big>\Big\}=\big<\nu_{t}^{i},u\big>,\\
     I_{5}&=\mathbb{E}\Big\{\int_{t}^{T}\big< Q(s,\alpha(s))X(s;t,0,i,0)+q(s), X^{0}(s;t,x,i,0)\big>ds\\
      &\quad+\big< G(T,\alpha(T)) X(T;t,0,i,0)+g, X^{0}(T;t,x,i,0)\big>\Big\}
      =\big<y_{t}^{i},x\big>,\\
     I_{6}&=\mathbb{E}\Big\{\int_{t}^{T}\big< Q(s,\alpha(s))X(s;t,0,i,0)+2q(s), X(s;t,0,i,0)\big>ds\\
      &\quad+\big< G(T,\alpha(T)) X(T;t,0,i,0)+2g, X(T;t,0,i,0)\big>\Big\}=c_{t}^{i}.
   \end{align*}
 This completes the proof.
\end{proof}

\begin{remark}\label{rmk-bounded}\rm
Based on the estimation for the solution to FBSDE \eqref{FBSDE} (see, for example,  \cite{cohen2015stochastic,situ2005theory}), we point out that $M_{2}(t,i)$, $M_{1}(t,i)$ and $M_{0}(t,i)$ are bounded linear operators of different Hilbert space. More specifically, we have
\begin{equation*}
  M_{2}(t,i):\text{ } \mathcal{U}[t,T] \mapsto \mathcal{U}[t,T],\qquad
  M_{1}(t,i): \text{ }\mathbb{R}^{n} \mapsto \mathcal{U}[t,T],\qquad
  M_{0}(t,i): \text{ }\mathbb{R}^{n} \mapsto \mathbb{R}^{n}.
\end{equation*}
\end{remark}
Based on the  Proposition \ref{prop-cost-representation}, we can directly obtain the following corollary.
\begin{corollary}\label{coro-cost-representation}
Let assumptions (A1)-(A2) hold. Then for any $(t,x,i)\in[0,T]\times\mathbb{R}^{n}\times \mathcal{S}$, $u(\cdot), \upsilon(\cdot)\in \mathcal{U}[t,T]$ and $\varepsilon\in\mathbb{R}$, the following holds:
\begin{equation}\label{epsilon-cost}
 J(t,x,i;u(\cdot)+\varepsilon\upsilon(\cdot))=J(t,x,i,u(\cdot))+\varepsilon^2 J^{0}(t,0,i,\upsilon(\cdot))
 +2\varepsilon\mathbb{E}\int_{t}^{T}\big<\bar{M}[t,i,x,u](s),\upsilon(s)\big>ds,
\end{equation}
where
\begin{equation}\label{bar-M}
\hspace{-0.4cm}\begin{array}{l}
  \bar{M}[t,i,x,u](s)=B(s,\alpha(s))^{\top}Y(s;t,x,i,u)+D(s,\alpha(s))^{\top}Z(s;t,x,i,u)+R(s,\alpha(s))u(s)+\rho(s)\\[2mm]
\qquad\qquad\qquad\hspace{2mm}+\sum_{j=1}^{L}\lambda_{j}(s) F_{j}(s,\alpha(s))^{\top}\Gamma_{j}(s;t,x,i,u)
+S(s,\alpha(s))X(s;t,x,i,u).
\end{array}
\end{equation}
Consequently, the map $u(\cdot)\mapsto J(t,x,i;u(\cdot))$ is Fr\'echet differentiable and the Fr\'echet derivative is given by
\begin{equation}\label{derivative-cost}
\mathcal{D}J(t,x,i;u(\cdot))(s)=2\bar{M}[t,i,x,u](s).
\end{equation}
\end{corollary}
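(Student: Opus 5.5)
The plan is to expand $J(t,x,i;u+\varepsilon\upsilon)$ directly with the representation \eqref{cost-representation} of Proposition \ref{prop-cost-representation}, and then identify the cross term with $\bar M$. Since $M_2(t,i)$ is bounded and linear (Remark \ref{rmk-bounded}) and also self-adjoint, we get
\begin{align*}
J(t,x,i;u+\varepsilon\upsilon)&=J(t,x,i;u)+\varepsilon^2\big<M_2(t,i)\upsilon,\upsilon\big>\\
&\quad+2\varepsilon\big<M_2(t,i)u+M_1(t,i)x+\nu_t^i,\upsilon\big>.
\end{align*}
Self-adjointness of $M_2(t,i)$ follows from $I_1$ in the proof of Proposition \ref{prop-cost-representation}. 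There $\big<M_2(t,i)u,u\big>$ is a quadratic form in $u$, and the same Itô computation for the pair $X^0(\cdot;t,0,i,u)$, $Y^0(\cdot;t,0,i,\upsilon)$ gives a bilinear expression that is symmetric in $(u,\upsilon)$. Alternatively, we can symmetrize and use only $\big<M_2 u,\upsilon\big>+\big<M_2\upsilon,u\big>$ together with the explicit formula. Next, taking $x=0$ in the first identity of \eqref{cost-representation} gives $\big<M_2(t,i)\upsilon,\upsilon\big>=J^0(t,0,i;\upsilon)$.

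It then remains to show that $M_2(t,i)u+M_1(t,i)x+\nu_t^i=\bar M[t,i,x,u]$. The FBSDE \eqref{FBSDE} is linear in $(x,u)$ and in the nonhomogeneous data $(b,\sigma,\gamma_j,q,g)$, so by uniqueness of its solution the superposition
\[
(X,Y,Z,\mathbf{\Gamma})(\cdot;t,x,i,u)=(X^0,Y^0,Z^0,\mathbf{\Gamma}^0)(\cdot;t,0,i,u)+(X^0,Y^0,Z^0,\mathbf{\Gamma}^0)(\cdot;t,x,i,0)+(X,Y,Z,\mathbf{\Gamma})(\cdot;t,0,i,0)
\]
holds. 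Adding the explicit formulas for $M_2u$, $M_1x$ and $\nu_t^i$ term by term then produces exactly \eqref{bar-M}. The $Ru$ term comes from $M_2$ and the $\rho$ term comes from $\nu$. This proves \eqref{epsilon-cost}.

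For Fréchet differentiability, put $\varepsilon=1$. The remainder is $J^0(t,0,i;\upsilon)=\big<M_2(t,i)\upsilon,\upsilon\big>$, which is bounded by $\|M_2(t,i)\|\,\|\upsilon\|^2=o(\|\upsilon\|)$. The map $\upsilon\mapsto 2\mathbb{E}\int_t^T\big<\bar M,\upsilon\big>ds$ is continuous and linear, because $\bar M[t,i,x,u]\in\mathcal U[t,T]$ by the FBSDE estimates cited in Remark \ref{rmk-bounded}. So the derivative is $2\bar M[t,i,x,u]$, which is \eqref{derivative-cost}.

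The main obstacle I expect is the symmetry of $M_2(t,i)$. To prove it, I would apply Itô's formula to $\big<Y^0(s;t,0,i,u),X^0(s;t,0,i,\upsilon)\big>$, carefully handling the jump term. The compensator contributes $\sum_j\lambda_j\big<\Gamma^0_j,E_jX+F_j\upsilon\big>$, and this is precisely why $\lambda_jE_j^\top\Gamma_j$ appears in the adjoint drift and $\lambda_jF_j^\top\Gamma_j$ appears in $M_2$. The outcome is
\[
\big<M_2(t,i)u,\upsilon\big>=\mathbb{E}\Big\{\int_t^T\big[\big<QX_u,X_\upsilon\big>+\big<SX_u,\upsilon\big>+\big<SX_\upsilon,u\big>+\big<Ru,\upsilon\big>\big]ds+\big<G X_u(T),X_\upsilon(T)\big>\Big\},
\]
which is symmetric in $(u,\upsilon)$.
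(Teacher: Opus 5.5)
Your proposal is correct and follows essentially the same route as the paper: you expand $J(t,x,i;u+\varepsilon\upsilon)$ via the representation \eqref{cost-representation}, then identify $M_2(t,i)u+M_1(t,i)x+\nu_t^i$ with $\bar M[t,i,x,u]$ by linearity and superposition for the FBSDE \eqref{FBSDE}. Your It\^o check that $M_2(t,i)$ is self-adjoint, which the paper's expansion uses without comment, and your $o(\|\upsilon\|)$ bound for the Fr\'echet derivative, which the paper only asserts, are both valid and fill steps the paper leaves implicit.
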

\begin{proof}
It follows from equation \eqref{cost-representation} that
\begin{equation}\label{eq-1}
  \begin{aligned}
 &\quad  J(t,x,i;u(\cdot)+\varepsilon\upsilon(\cdot))-J(t,x,i;u(\cdot))\\
  &=\varepsilon^{2}\big<M_{2}(t,i)\upsilon,\upsilon\big>
  +2\varepsilon\big[\big<M_{2}(t,i)u,\upsilon\big>+\big<M_{1}(t,i)x,\upsilon\big>+\big<\nu_{t}^{i},\upsilon\big>\big]\\
  &=\varepsilon^{2} J^{0}(t,0,i;\upsilon(\cdot))+2\varepsilon\big<M_{2}(t,i)u+M_{1}(t,i)x+\nu_{t}^{i},\upsilon\big>.
  \end{aligned}
  \end{equation}
  Based on the linearity of FBSDEs \eqref{FBSDE}, we can further obtain
  $$
  \begin{array}{l}
  \big[M_{2}(t,i)u+M_{1}(t,i)x+\nu_{t}^{i}\big](s)=B(s,\alpha(s))^{\top}Y(s;t,x,i,u)+D(s,\alpha(s))^{\top}Z(s;t,x,i,u)\\[2mm]
\qquad+\sum_{j=1}^{L}\lambda_{j}(s) F_{j}(s,\alpha(s))^{\top}\Gamma_{j}(s;t,x,i,u)
+S(s,\alpha(s))X(s;t,x,i,u)+R(s,\alpha(s))u(s)+\rho(s).
\end{array}
  $$
  Hence, the desired result follows by substituting the above equation into \eqref{eq-1}.
  %This completes the proof.
\end{proof}

\section{Open-loop solvability}\label{section-4}
In this section, we shall study the open-loop solvability of Problem (M-SLQ). The following result establishes the equivalence between open-loop solvability and the existence of an adapted solution to FBSDEs with constraints under the convexity condition on the cost functional.

\begin{theorem}\label{thm-open-loop-solvability}
Let assumptions (A1)-(A2) hold. An element $u(\cdot)\in \mathcal{U}[t,T]$ is an open-loop optimal control of Problem (M-SLQ) for the initial value $(t,x,i)\in [0,T]\times\mathbb{R}^{n}\times \mathcal{S}$ if and only if:
  \begin{description}
    \item[(i)] The following convexity condition holds:
    \begin{equation}\label{convex-condition}
      J^{0}(t,0,i;v(\cdot))\geq 0,\quad \forall v(\cdot)\in \mathcal{U}[t,T], \quad \forall i\in \mathcal{S};
    \end{equation}
    \item[(ii)] The adapted solution $\left(X(\cdot),Y(\cdot),Z(\cdot),\mathbf{\Gamma}(\cdot)\right)$ to the  FBSDEs \eqref{FBSDE} % with $u(\cdot)$ replaced by $u(\cdot)$
    satisfies the following stationary condition:
  \begin{equation}\label{stationary-condition}
  \begin{array}{l}
      \bar{M}[t,i,x,u](s)=B(s,\alpha(s))^{\top}Y(s)+ D(s,\alpha(s))^{\top}Z(s)
    +\sum_{j=1}^{L}\lambda_{j}(s)F_{j}(s,\alpha(s))^{\top}\Gamma_{j}(s)\\[2mm]
      \qquad+S(s,\alpha(s))X(s)+R(s,\alpha(s))u(s)+\rho(s)=0,\quad a.e.\text{ }s\in[t,T],\quad a.s. .
     \end{array}
  \end{equation}
  \end{description}
\end{theorem}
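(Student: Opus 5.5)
The plan is to derive both directions from the variational identity \eqref{epsilon-cost} in Corollary \ref{coro-cost-representation}. That identity says that, for every $u(\cdot),\upsilon(\cdot)\in\mathcal{U}[t,T]$ and $\varepsilon\in\mathbb{R}$,
\begin{equation*}
J(t,x,i;u+\varepsilon\upsilon)-J(t,x,i;u)=\varepsilon^{2}J^{0}(t,0,i;\upsilon)+2\varepsilon\,\mathbb{E}\int_{t}^{T}\big<\bar{M}[t,i,x,u](s),\upsilon(s)\big>ds.
\end{equation*}
With this in hand, the theorem reduces to elementary facts about a real quadratic function of $\varepsilon$.

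\emph{Sufficiency.} Assume (i) and (ii). Any $w(\cdot)\in\mathcal{U}[t,T]$ can be written as $u+\varepsilon\upsilon$ with $\varepsilon=1$ and $\upsilon=w-u$. By (ii), $\bar{M}[t,i,x,u]=0$, so the linear term vanishes. By (i), $J^{0}(t,0,i;\upsilon)\ge 0$. Hence $J(t,x,i;w)\ge J(t,x,i;u)$, and $u$ is optimal.

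\emph{Necessity.} Let $u$ be optimal. Then for every $\upsilon$, the real function
\[
\varepsilon\mapsto \varepsilon^{2}J^{0}(t,0,i;\upsilon)+2\varepsilon\,\mathbb{E}\int_{t}^{T}\big<\bar{M}[t,i,x,u],\upsilon\big>ds
\]
is nonnegative on $\mathbb{R}$. Two consequences follow:
\begin{itemize}
\item Dividing by $\varepsilon>0$ and by $\varepsilon<0$ and letting $\varepsilon\to0$ gives $\mathbb{E}\int_{t}^{T}\big<\bar{M}[t,i,x,u],\upsilon\big>ds=0$ for all $\upsilon$.
\item With the linear term gone, $J^{0}(t,0,i;\upsilon)\ge0$ for all $\upsilon$, which is (i).
\end{itemize}
To get (ii), choose $\upsilon=\bar{M}[t,i,x,u]$. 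This forces $\bar{M}=0$ a.e., a.s., provided $\bar{M}[t,i,x,u]$ is itself an admissible control, i.e.\ lies in $\mathcal{U}[t,T]=L^{2}_{\mathcal{P}}(t,T;\mathbb{R}^{m})$.

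\emph{Main obstacle: admissibility of $\bar{M}$.} Square-integrability is routine. It follows from the a priori estimates for the FBSDE \eqref{FBSDE} (Remark \ref{rmk-bounded}) together with boundedness of the coefficients and of $\lambda_j$. Predictability is the delicate point, because $\bar{M}$ contains $Y(s)$, $X(s)$, $\alpha(s)$ and $\rho(s)$, which are only progressively measurable. I would handle it as follows:
\begin{itemize}
\item Replace $\bar{M}$ by its predictable projection, equivalently replace $X(s),Y(s),\alpha(s)$ by $X(s-),Y(s-),\alpha(s-)$.
\item These coincide with the originals except at the countably many jump times of the chain. The jump set has $ds\otimes d\mathbb{P}$-measure zero, so the pairing with any predictable $\upsilon$ is unchanged.
\item Likewise $\rho$, $Z$ and $\Gamma_j$ (the latter two taken predictable) can be replaced by predictable versions without changing the pairing.
\end{itemize}
Testing against $\upsilon=\bar{M}^{p}$, the predictable version, then gives $\bar{M}^{p}=0$ and hence $\bar{M}=0$ a.e., a.s.

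Finally, the quantifier $\forall i$ in \eqref{convex-condition} should be read for the given initial regime $i$ at which optimality is assumed. That is exactly what the argument above yields.
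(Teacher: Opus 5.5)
Your argument is correct and follows the paper's proof: both directions are read off from the identity \eqref{epsilon-cost} of Corollary \ref{coro-cost-representation}, treating $J(t,x,i;u+\varepsilon\upsilon)-J(t,x,i;u)$ as a quadratic in $\varepsilon$ whose linear coefficient must vanish and whose leading coefficient must be nonnegative. You also spell out two things the paper leaves implicit: the step it calls ``clear'' (testing with a predictable version of $\bar{M}[t,i,x,u]$ to get the pointwise condition \eqref{stationary-condition}), and the fact that necessity only yields \eqref{convex-condition} for the given initial regime $i$, which is also all the paper's argument gives.
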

\begin{proof}
  By definition, $u(\cdot)\in \mathcal{U}[t,T]$ is an open-loop optimal control for Problem (M-SLQ) if and only if
  \begin{equation}\label{open-loop}
  J(t,x,i;u(\cdot)+\varepsilon \upsilon(\cdot))- J(t,x,i;u(\cdot))\geq 0,\quad \forall\varepsilon\in\mathbb{R},\quad \forall  \upsilon(\cdot)\in\mathcal{U}[t,T].
\end{equation}
It follows from Corollary \ref{coro-cost-representation} that
  %By Corollary \ref{coro-cost-representation}, we obtain
  $$
   J(t,x,i;u(\cdot)+\varepsilon\upsilon(\cdot))-J(t,x,i,u(\cdot))=\varepsilon^2 J^{0}(t,0,i,\upsilon(\cdot))
 +2\varepsilon\mathbb{E}\int_{t}^{T}\big<\bar{M}[t,i,x,u](s),\upsilon(s)\big>ds.
  $$
Consequently, it is clear that the condition \eqref{open-loop} holds if and only if the convexity condition \eqref{convex-condition} and the stationary condition \eqref{stationary-condition} hold.
\end{proof}

\begin{remark}\label{rmk-1}\rm
In the above, the stationary condition \eqref{stationary-condition} together with the FBSDEs \eqref{FBSDE} %(with $u(\cdot)$ replaced by $u^{*}(\cdot)$)
constitute the \emph{optimality system} for the Problem (M-SLQ). As we can see from Zhang et al. \cite{Zhang-Li-Xiong-2021}, the corresponding optimality system for the SLQ control problem with state \eqref{MRSDS} is given by
\begin{equation}\label{optimality-system-zhang}
      \left\{
      \begin{array}{l}
      dX(s)=\left[A(s,\alpha(s))X(s)+B(s,\alpha(s))u(s)+b(s)\right]ds\\[2mm]
      \qquad\qquad +\left[C(s,\alpha(s))X(s)+D(s,\alpha(s))u(s)+\sigma(s)\right]dW(s)\\[2mm]
      dY(s)=-\big[A(s,\alpha(s))^{\top}Y(s)+C(s,\alpha(s))^{\top}Z(s)+ Q(s,\alpha(s))X(s)\\[2mm]
      \qquad\qquad+S(s,\alpha(s))^{\top}u(s)+q(s)\big]ds+Z(s)dW(s)+\sum_{j=1}^{L}\Gamma_{j}(s) d\widetilde{N}_{j}(s),\\[2mm]
      B(s,\alpha(s))^{\top}Y(s)+ D(s,\alpha(s))^{\top}Z(s)+S(s,\alpha(s))X(s)+R(s,\alpha(s))u(s)+\rho(s)=0,\\[2mm]
      X(t)=x,\quad \alpha(t)=i,\quad Y(T)=G(T,\alpha(T)) X(T)+g,\quad s\in[0,T].
      \end{array}
      \right.
    \end{equation}
In the above, although the adjoint equation (the second equation in \eqref{optimality-system-zhang}) is jointly driven by the Markovian jumps and Brownian motion, the jump size variable $\Gamma_{j}(\cdot)$ does not appear in its generator. In contrast, the optimality system derived in this paper is significantly more complex. As will be seen in the subsequent analysis, decoupling such an optimality system requires the regularity solution of a more complex Riccati equation.
\end{remark}

Theorem \ref{thm-open-loop-solvability} only provides an equivalent characterization for open-loop optimal control by the solvability of the optimality system. However, the optimality system is a coupled FBSDEs due to the stationary condition \eqref{stationary-condition}, and its solvability is not self-evident. Therefore, a fundamental question naturally arises: Under what conditions is Problem (M-SLQ) guaranteed to be open-loop solvable, and what is the expression for its open-loop optimal control?

Both Sun et al. \cite{Sun-Li-Yong-2016} and Zhang et al. \cite{Zhang-Li-Xiong-2021} have proven that the SLQ control problems with deterministic coefficients are open-loop solvable under the uniform convexity condition of the cost functional. In what follows, we develop this result for Problem (M-SLQ). We first introduce the following  uniform convexity condition of the cost functional:
\begin{equation}\label{uniformly-convex-condition}
  J^{0}(0,0,i;u(\cdot))\geq \mu \mathbb{E}\int_{0}^{T}\big|u(s)\big|^{2}ds,\quad \forall u(\cdot)\in\mathcal{U}[0,T],\quad \forall i\in\mathcal{S},\quad \text{for some } \mu>0.
\end{equation}
We then obtain the following result.
\begin{lemma}\label{lem-0}
 Let assumptions (A1)-(A2) and \eqref{uniformly-convex-condition} hold. Then for any $t\in[0,T]$,  the following holds:
 \begin{equation}\label{uniformly-convex-condition-t}
  J^{0}(t,0,i;u(\cdot))\geq \mu \mathbb{E}\int_{t}^{T}\big|u(s)\big|^{2}ds,\quad \forall u(\cdot)\in\mathcal{U}[t,T],\quad \forall i\in\mathcal{S},\quad \text{for some } \mu>0.
\end{equation}
\end{lemma}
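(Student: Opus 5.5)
The natural approach is to extend a control on $[t,T]$ by zero to $[0,T]$ and then apply the uniform convexity condition \eqref{uniformly-convex-condition} on the whole interval. The one complication is the initial regime. Under the zero control on $[0,t)$ the homogeneous state started at $(0,0,j)$ stays identically zero on $[0,t]$, whatever the initial regime $j$. At time $t$, however, the regime $\alpha(t)$ is random, not fixed at $i$. So the plan is to localize on $\{\alpha(t)=i\}$.

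Fix $t\in(0,T]$ (the case $t=0$ is trivial) and $i\in\mathcal{S}$, and take $u(\cdot)\in\mathcal{U}[t,T]$. The first step is to choose a starting regime $j\in\mathcal{S}$ with $\mathbb{P}(\alpha(t)=i\mid\alpha(0)=j)>0$; this exists by irreducibility (for example $j=i$). Working under the chain started at $\alpha(0)=j$, I define
\[
\tilde u(s)=I_{\{\alpha(t)=i\}}\,u(s)I_{[t,T]}(s), \qquad s\in[0,T].
\]
Here $u$ is understood as a predictable functional of the noise increments after $t$, so that it can be placed on the shifted path starting from regime $i$ at time $t$. This $\tilde u$ is predictable and lies in $\mathcal{U}[0,T]$. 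Because the homogeneous system with zero initial state and zero control stays at zero, we have $X^0(s;0,0,j,\tilde u)=0$ on $[0,t]$. On $[t,T]$ the process $X^0$ solves the homogeneous equation with $X^0(t)=0$ and control $\tilde u$.

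The second step uses the strong Markov property of $(W,\alpha)$ at time $t$, together with the time-homogeneity-free structure (the coefficients depend only on $s$ and $\alpha(s)$). On $\{\alpha(t)=i\}$, the conditional law of the pair (state, cost) on $[t,T]$ equals that of the problem started at $(t,0,i)$ with control $u$. This gives
\[
J^0(0,0,j;\tilde u)=\mathbb{P}(\alpha(t)=i\mid\alpha(0)=j)\,J^0(t,0,i;u),
\]
and in the same way
\[
\mathbb{E}\int_0^T|\tilde u|^2ds=\mathbb{P}(\alpha(t)=i\mid\alpha(0)=j)\,\mathbb{E}\int_t^T|u|^2ds.
\]
The contribution on $\{\alpha(t)\neq i\}$ is zero, since there the state and the control vanish identically. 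Applying \eqref{uniformly-convex-condition} with initial regime $j$ and dividing by the positive probability gives \eqref{uniformly-convex-condition-t} with the same $\mu$.

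The main obstacle is making this transfer of $u$ rigorous. An element of $\mathcal{U}[t,T]$ is adapted to the filtration generated from time $0$, whereas the problem at $(t,0,i)$ fixes $\alpha(t)=i$. If instead the convention is that the underlying space is the same and $\alpha(t)=i$ is imposed, the argument is simpler: extend $u$ by zero and use $\alpha(0)=j$ chosen so that the regime at $t$ is $i$. Either way, I would make the identification precise by conditioning on $\mathcal{F}_t$, using that the homogeneous problem's future depends only on $\alpha(t)$ and the future noise increments.

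If this bookkeeping becomes heavy, an alternative is available. Write the cost via the Itô formula as an integral of a quadratic form in $u$ with zero state up to time $t$, and argue through the representation of Proposition~\ref{prop-cost-representation}: $\langle M_2(0,j)\tilde u,\tilde u\rangle$ reduces to $\langle M_2(t,i)u,u\rangle$ restricted to $\{\alpha(t)=i\}$. This is exactly the same conditioning step in operator language.
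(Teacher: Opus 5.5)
Your proposal is correct and follows the same route as the paper. Both arguments extend $u$ by zero on $[0,t)$, observe that the homogeneous state stays at zero up to time $t$, condition on $\{\alpha(t)=i\}$, and apply \eqref{uniformly-convex-condition} at time $0$; your localization $\tilde u=I_{\{\alpha(t)=i\}}u\,I_{[t,T]}$ with a single starting regime $j$ is slightly cleaner than the paper's conditional mixture over $\{\alpha(0)=j\}$, because both the cost and the control norm then carry the same factor $\mathbb{P}(\alpha(t)=i\mid\alpha(0)=j)$, which cancels.
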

\begin{proof}
 For any $t\in[0,T]$ and $u(\cdot)\in \mathcal{U}[t,T]$, let
$$\upsilon(s)=\left\{
\begin{array}{ll}
0, & s\in[0,t)\\
u(s), & s\in[0,T].
\end{array}
\right.$$
Then $\upsilon(\cdot)\in\mathcal{U}[0,T]$ and $X^{0}(s;0,0,j,\upsilon)\equiv 0$ over the interval $[0,t]$ for all $j\in\mathcal{S}$.
Consequently, for any $i\in\mathcal{S}$, one has
\begin{align*}
J^{0}(t,0,i;u(\cdot))=\mathbb{E}\sum_{j=1}^{L}\left[J^{0}(0,0,j;\upsilon(\cdot))I_{\{\alpha(0)=j\}}\big| \alpha(t)=i\right]
\geq \mu \mathbb{E}\int_{0}^{T} |\upsilon(s)|^2ds
=\mu \mathbb{E}\int_{t}^{T} |u(s)|^2ds.
\end{align*}
This completes the proof.
\end{proof}

The following result points out that the Problem (M-SLQ) is open-loop solvable for any given initial value under the uniform convexity condition \eqref{uniformly-convex-condition}.

\begin{theorem}\label{thm-open-loop-solvability-uniform-convex}
  Let assumptions (A1)-(A2) and \eqref{uniformly-convex-condition} hold.  Then for any $(t,x,i)\in[0,T]\times \mathbb{R}^{n}\times\mathcal{S}$, Problem (M-SLQ) is uniquely open-loop solvable, and there exists a constant $\gamma\in\mathbb{R}$ such that
  \begin{equation}\label{value-0-property}
    V^{0}(t,x,i)\geq \gamma \big|x\big|^{2},\quad \forall (t,x,i)\in [0,T]\times \mathbb{R}^{n}\times\mathcal{S}.
  \end{equation}
\end{theorem}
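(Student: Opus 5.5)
By Proposition \ref{prop-cost-representation}, $J(t,x,i;\cdot)$ is a quadratic functional on the Hilbert space $\mathcal{U}[t,T]$:
\[
J(t,x,i;u)=\langle M_2(t,i)u,u\rangle+2\langle M_1(t,i)x+\nu_t^i,u\rangle+\langle M_0(t,i)x,x\rangle+2\langle y_t^i,x\rangle+c_t^i .
\]
The plan is to show that $M_2(t,i)$ is coercive and then minimize this quadratic explicitly.

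\emph{Step 1 (coercivity).} By the first line of \eqref{cost-representation} with $x=0$, $\langle M_2(t,i)u,u\rangle=J^0(t,0,i;u)$. Lemma \ref{lem-0} then gives
\[
\langle M_2(t,i)u,u\rangle\ge\mu\|u\|^2\quad\text{for all }u\in\mathcal{U}[t,T].
\]
The operator $M_2(t,i)$ is bounded by Remark \ref{rmk-bounded}. It is also self-adjoint: one can replace it by its symmetric part, or see this directly from the It\^o duality used in the proof of Proposition \ref{prop-cost-representation}, since $\langle M_2u,v\rangle$ is a symmetric bilinear form in $(u,v)$. Hence $M_2(t,i)\ge\mu I$, and by Lax--Milgram it is invertible with $\|M_2(t,i)^{-1}\|\le\mu^{-1}$.

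\emph{Step 2 (existence and uniqueness).} The functional $J(t,x,i;\cdot)$ is continuous and strictly convex, since its second-order term is $\ge\mu\|u\|^2$. Completing the square gives the unique minimizer
\[
u^*=-M_2(t,i)^{-1}\bigl(M_1(t,i)x+\nu_t^i\bigr).
\]
Equivalently, in the language of Theorem \ref{thm-open-loop-solvability}: the convexity condition \eqref{convex-condition} holds, and $\bar M[t,i,x,u^*]=M_2u^*+M_1x+\nu=0$ is exactly the stationary condition \eqref{stationary-condition}. For uniqueness, if $u_1,u_2$ are both optimal, then Corollary \ref{coro-cost-representation} with $\varepsilon=1$, $\upsilon=u_2-u_1$, together with the stationarity of $u_1$, gives
\[
0=J(u_2)-J(u_1)=J^0(t,0,i;u_2-u_1)\ge\mu\|u_2-u_1\|^2,
\]
so $u_1=u_2$.

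\emph{Step 3 (lower bound \eqref{value-0-property}).} In the homogeneous problem, substituting $u^*$ gives
\[
V^0(t,x,i)=\bigl\langle\bigl(M_0(t,i)-M_1(t,i)^*M_2(t,i)^{-1}M_1(t,i)\bigr)x,x\bigr\rangle .
\]
Hence
\[
V^0(t,x,i)\ge-\Bigl(\|M_0(t,i)\|+\mu^{-1}\|M_1(t,i)\|^2\Bigr)|x|^2 .
\]

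The main obstacle is making the constant $\gamma$ independent of $t\in[0,T]$, that is, a uniform-in-$t$ bound on $\|M_0(t,i)\|$ and $\|M_1(t,i)\|$. I would take this from the standard a priori estimates for the linear FBSDE \eqref{FBSDE} with bounded deterministic coefficients and $u=0$:
\[
\mathbb{E}\sup_{s\in[t,T]}\bigl|X^0(s;t,x,i,0)\bigr|^2+\mathbb{E}\int_t^T\Bigl(|Y^0|^2+|Z^0|^2+\sum_j\lambda_j|\Gamma^0_j|^2\Bigr)ds\le K|x|^2 .
\]
These follow from Gronwall's inequality, and $K$ depends only on $T$, the $L^\infty$ bounds of the coefficients, and the bounds on the intensities $\pi_{ij}$; in particular $K$ does not depend on $t$ or $i$. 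The same estimate bounds $\|M_1(t,i)\|^2$ and $\|M_0(t,i)\|$ by a constant times $K$.

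An alternative route avoids the operator norms altogether. Write $J^0(t,x,i;u)=J^0(t,0,i;u)+2\langle M_1x,u\rangle+\langle M_0x,x\rangle$, apply Young's inequality $2\langle M_1x,u\rangle\ge-\mu\|u\|^2-\mu^{-1}\|M_1x\|^2$, and use $J^0(t,0,i;u)\ge\mu\|u\|^2$ to cancel the $-\mu\|u\|^2$ term. This leaves
\[
J^0(t,x,i;u)\ge\langle M_0(t,i)x,x\rangle-\mu^{-1}\|M_1(t,i)x\|^2\ge\gamma|x|^2
\]
for every $u$, with $\gamma=-\sup_{t,i}\bigl(\|M_0(t,i)\|+\mu^{-1}\|M_1(t,i)\|^2\bigr)$, which is finite by the uniform estimate above.
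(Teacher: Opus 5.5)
Your proof is correct. Its skeleton matches the paper's: both combine $J^{0}(t,0,i;u)\ge\mu\|u\|^{2}$ from Lemma \ref{lem-0} with the quadratic structure of $J$. Your ``alternative route'' for the lower bound is exactly the paper's Young-inequality estimate; the paper splits it as $\frac{\mu}{2}\|u\|^2$ and $\frac{1}{2\mu}\|\mathcal{D}J(t,x,i;0)\|^2$.

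The two approaches differ in two places.

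\emph{Existence.} The paper derives the coercive bound $J(t,x,i;u)\ge J(t,x,i;0)+\frac{\mu}{2}\|u\|^2-C$ and invokes the direct method (minimizing sequence, weak compactness), with uniqueness from strict convexity. You instead invert $M_2(t,i)$ via Lax--Milgram and complete the square. This needs $M_2(t,i)$ to be self-adjoint, which holds by the It\^o duality you cite and is already used implicitly in the proof of Corollary \ref{coro-cost-representation}. In return you get the explicit minimizer $u^{*}=-M_2(t,i)^{-1}(M_1(t,i)x+\nu_t^i)$ and the formula $V^{0}(t,x,i)=\langle (M_0-M_1^{*}M_2^{-1}M_1)x,x\rangle$.

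\emph{Uniformity of $\gamma$ in $t$.} The paper only asserts that the right-hand side of its bound is continuous in $t$, implicitly using compactness of $[0,T]$, and does not prove that continuity. Your appeal to FBSDE a priori estimates, whose constant depends only on $T$ and the coefficient bounds, is more direct and settles this step more convincingly.

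One small point. Your displayed estimate controls $\mathbb{E}\int_t^T|Y^0|^2ds$ but not $|\mathbb{E}Y^0(t)|$, so it does not by itself bound $\|M_0(t,i)\|$. Either use the standard version that includes $\mathbb{E}\sup_s|Y^0(s)|^2$, or note that $\langle M_0(t,i)x,x\rangle=J^{0}(t,x,i;0)\ge -C\,\mathbb{E}\sup_s|X^0(s;t,x,i,0)|^2$. The latter is all the lower bound requires.
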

\begin{proof}
By Corollary \ref{coro-cost-representation} and Lemma \ref{lem-0}, we have
\begin{equation}\label{J}
\begin{aligned}
 J(t,x,i;u(\cdot))&=J(t,x,i;0)+J^{0}(t,0,i;u(\cdot))+\mathbb{E}\int_{t}^{T}\big<\mathcal{D}J(t,x,i;0)(s),u(s)\big>ds\\
 &\geq J(t,x,i;0)+\frac{\mu}{2}\mathbb{E}\int_{t}^{T}\big|u(s)\big|^{2}ds
 -\frac{1}{2\mu}\mathbb{E}\int_{t}^{T}\big|\mathcal{D}J(t,x,i;0)(s)\big|^{2}ds.
\end{aligned}
\end{equation}
Thus, adopting the standard argument involving minimizing sequence and locally weak compactness of Hilbert space, we obtain that the Problem (M-SLQ) admits a unique open-loop optimal control for any given initial value $(t,x,i)\in[0,T]\times \mathbb{R}^{n}\times\mathcal{S}$.

Additionally, if $b(\cdot)=\sigma(\cdot)=q(\cdot)=0$, $\rho(\cdot)=0$, $g=0$, then \eqref{J} implies that
\begin{equation}\label{V}
V^{0}(t,x,i)\geq J^{0}(t,x,i;0)-\frac{1}{2\mu}\mathbb{E}\int_{t}^{T}\big|\mathcal{D}J^{0}(t,x,i;0)(s)\big|^{2}ds,
\end{equation}
where
$$
  \begin{array}{l}
\mathcal{D}J^{0}(t,x,i;u(\cdot))(s)=2\Big\{B(s,\alpha(s))^{\top}Y^{0}(s;t,x,i,0)+D(s,\alpha(s))^{\top}Z^{0}(s;t,x,i,0)\\[2mm]
\qquad\qquad+\sum_{j=1}^{L}\lambda_{j}(s) F_{j}(s,\alpha(s))^{\top}\Gamma_{j}^{0}(s;t,x,i,0)
+S(s,\alpha(s))X^{0}(s;t,x,i,0)\Big\}.
\end{array}
$$
The functions on the right-hand side of \eqref{V} are quadratic in $x$ and continuous in $t$. Hence, the desired result \eqref{value-0-property} follows immediately.
\end{proof}

The above theorem illustrates that the equation \eqref{uniformly-convex-condition} serves as a sufficient condition for the open-loop solvability of Problem (M-SLQ). However, from an application perspective, condition \eqref{uniformly-convex-condition} remains challenging to verify. As demonstrated in  \cite{Zhang-Li-Xiong-2021} for the SLQ control problem of regime-switching diffusion systems, the following standard condition guarantees the validity of condition \eqref{uniformly-convex-condition}:
\begin{equation}\label{standard-condition}
  G(T,i)\geq 0,\quad R(\cdot,i)\gg 0,\quad Q(\cdot,i)-S(\cdot,i)^{\top}R(\cdot,i)S(\cdot,i)\geq 0,\quad i\in\mathcal{S}.
\end{equation}

Next, we also develop this result for the Problem (M-SLQ). To this end, we first provide the following lemma.

\begin{lemma}\label{lem-3}
 Suppose that assumption (A1) holds and $u(\cdot)\in \mathcal{U}[t,T]$. Let $X^{u}(\cdot)$ be the solution to
 \begin{equation}\label{lemma-3-1}
   \left\{
   \begin{array}{l}
   dX^{u}(s)=\big[A(s,\alpha(s))X^{u}(s)+B(s,\alpha(s))u(s)\big]ds
   +\big[C(s,\alpha(s))X^{u}(s)+D(s,\alpha(s))u(s)\big]dW(s)\\[2mm]
   \qquad\qquad+\sum_{j=1}^{L}\big[E_{j}(s,\alpha(s-))X^{u}(s)
   +F_{j}(s,\alpha(s))u(s)\big]d\widetilde{N}_{j}(s), \quad s\in[t,T],\\[2mm]
   X^{u}(t)=0,\quad \alpha(t)=i.
   \end{array}
   \right.
 \end{equation}
 Then, for any  $\mathbf{\Theta}(\cdot)\in \mathcal{D}\left(L^{\infty}(t,T;\mathbb{R}^{m\times n})\right)$, there exist a constant $\delta>0$ such that
 \begin{equation}\label{lem-3-2}
   \mathbb{E}\int_{t}^{T}\big|u(s)-\Theta(s,\alpha(s))X^{u}(s)\big|^{2}ds\geq \delta \mathbb{E}\int_{t}^{T}\big|u(s)\big|^{2}ds.
 \end{equation}
\end{lemma}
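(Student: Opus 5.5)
The plan is the standard inverse-operator argument of Sun et al. \cite{Sun-Li-Yong-2016}, adapted to the Markovian jumps. Fix $\mathbf{\Theta}(\cdot)$ and define the linear map $\mathcal{L}:\mathcal{U}[t,T]\to\mathcal{U}[t,T]$ by $\mathcal{L}u:=u-\Theta(\cdot,\alpha(\cdot))X^{u}(\cdot)$. Here $u\mapsto X^u$ is bounded by the standard $L^2$ estimates for \eqref{lemma-3-1}. Since $X^u$ is c\`adl\`ag and adapted, we may replace $X^u(s)$ by $X^u(s-)$ where needed without changing the $ds$-integral, so $\mathcal{L}u$ lies in $\mathcal{U}[t,T]$ after taking the predictable version. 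Hence $\mathcal{L}$ is bounded. The claim \eqref{lem-3-2} is exactly that $\mathcal{L}$ is bounded below. By the bounded inverse theorem it suffices to show that $\mathcal{L}$ is bijective; then $\delta=\|\mathcal{L}^{-1}\|^{-2}$.

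To invert $\mathcal{L}$, take any $v\in\mathcal{U}[t,T]$ and look for $u$ with $u-\Theta X^u=v$, i.e.\ $u=\Theta(s,\alpha(s-))X^u(s-)+v(s)$. Substituting this into \eqref{lemma-3-1} yields the closed-loop linear SDE
\begin{equation*}
\begin{aligned}
dX&=\big[(A+B\Theta)X+Bv\big]ds+\big[(C+D\Theta)X+Dv\big]dW\\
&\quad+\sum_{j=1}^{L}\big[(E_j+F_j\Theta)X(s-)+F_jv\big]d\widetilde{N}_j,\qquad X(t)=0,\ \alpha(t)=i.
\end{aligned}
\end{equation*}
All the coefficients are bounded because $\Theta(\cdot,i)\in L^\infty$ and (A1) holds, and the intensities $\lambda_j$ are bounded. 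By Situ \cite[Theorem 117]{situ2005theory}, this equation has a unique solution $\tilde X\in\mathcal{S}^2_{\mathbb{F}}$ satisfying $\mathbb{E}\sup|\tilde X|^2\le K\,\mathbb{E}\int_t^T|v|^2ds$. Set $u:=\Theta\tilde X_-+v$, which is predictable and square integrable. By uniqueness for \eqref{lemma-3-1}, $X^u=\tilde X$, so $\mathcal{L}u=v$ and $\mathcal{L}$ is surjective.

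Injectivity follows from the same uniqueness: if $\mathcal{L}u=0$, then $X^u$ solves the homogeneous closed-loop equation with $v=0$, so $X^u\equiv0$, and therefore $u=0$. Alternatively, the estimate itself gives the bound directly:
\[
\mathbb{E}\int_t^T|u|^2\le 2\,\mathbb{E}\int_t^T|v|^2+2\|\Theta\|_\infty^2 TK\,\mathbb{E}\int_t^T|v|^2,
\]
which yields $\delta=\big(2+2\|\Theta\|_\infty^2TK\big)^{-1}$ explicitly and avoids the open mapping theorem.

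The main obstacle is bookkeeping rather than conceptual. One must check that the feedback control $\Theta(s,\alpha(s-))X(s-)$ is predictable, since controls must lie in $L^2_{\mathcal P}$. One must also check that the jump-driven closed-loop SDE has the a priori estimate with a constant $K$ depending only on the bounds of the coefficients and of $\pi$. This estimate comes from It\^o's formula for $|X|^2$ together with the compensator $\sum_j\lambda_j|(E_j+F_j\Theta)X+F_jv|^2ds$, followed by Gronwall's inequality. The mismatch in \eqref{lemma-3-1} between $\alpha(s-)$ and $\alpha(s)$ in $F_j$ is harmless, because the two differ only on a $ds\otimes d\mathbb{P}$-null set.
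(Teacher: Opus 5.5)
Your proposal is correct and follows the paper's proof: you define the bounded operator $u\mapsto u-\Theta X^{u}$, build its inverse from the closed-loop SDE, and get $\delta$ from the bounded inverse theorem (or, as you note, from the direct a priori estimate for the closed-loop system, which avoids that theorem), and your constant $\delta=\|\mathcal{L}^{-1}\|^{-2}$ is in fact the right one, whereas the paper's $\|\mathfrak{L}^{-1}\|^{-1}$ drops a square. The one slip is your closing remark: the gap between $F_j(s,\alpha(s))$ and $F_j(s,\alpha(s-))$ inside the $d\widetilde{N}_j$-integral cannot be closed by a $ds\otimes d\mathbb{P}$-null-set argument, because $dN_j$ charges exactly the switching times where the two differ, so $\alpha(s)$ there (like $X^{u}(s)$ in that term) should simply be read as a typo for the predictable version $\alpha(s-)$.
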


\begin{proof}
  For given $\mathbf{\Theta}(\cdot)\in \mathcal{D}\left(L^{\infty}(t,T;\mathbb{R}^{m\times n})\right)$, one can define a bounded linear operator $\mathfrak{L}:$ $\mathcal{U}[t,T]\rightarrow \mathcal{U}[t,T]$ by
  $$\mathfrak{L} u(s)=u(s)-\Theta(s,\alpha(s))X^{u}(s),\quad s\in[t,T].$$
  Then $\mathfrak{L}$ is bijective, and its inverse $\mathfrak{L}^{-1}$ is given by
  $$
  \mathfrak{L}^{-1} u(s)=u(s)+\Theta(s,\alpha(s))\widetilde {X}^{u}(s), \quad s\in[t,T],
  $$
  where $\widetilde {X}^{u}(\cdot)$ is the solution to the following SDE:
  \begin{equation}\label{lemma-3-2}
   \left\{
   \begin{array}{l}
   d\widetilde {X}^{u}(s)=\big\{\big[A(s,\alpha(s))+B(s,\alpha(s))\Theta(s,\alpha(s))\big]\widetilde {X}^{u}(s)+B(s,\alpha(s))u(s)\big\}ds\\[2mm]
   \qquad+\big\{\big[C(s,\alpha(s))+D(s,\alpha(s))\Theta(s,\alpha(s))\big]\widetilde {X}^{u}(s)+D(s,\alpha(s))u(s)\big\}dW(s)\\[2mm]
   \qquad+\sum_{j=1}^{L}\big\{\big[E_{j}(s,\alpha(s-))+F_{j}(s,\alpha(s-))\Theta(s,\alpha(s-))\big]\widetilde {X}^{u}(s-)+F_{j}(s,\alpha(s-))u(s)\big\}d\widetilde{N}_{j}(s), \\[2mm]
   \widetilde {X}^{u}(t)=0,\quad \alpha(t)=i,\quad s\in[t,T].
   \end{array}
   \right.
 \end{equation}
 By the bounded inverse theorem, $\mathfrak{L}^{-1}$ is a bounded operator with  norm $||\mathfrak{L}^{-1}||>0$. Hence,
 \begin{align*}
  \mathbb{E}\int_{t}^{T}\big|u(s)\big|^{2}ds
  &=\mathbb{E}\int_{t}^{T}\big|\mathfrak{L}^{-1}(\mathfrak{L}u)(s)\big|^{2}ds
  \leq ||\mathfrak{L}^{-1}||\mathbb{E}\int_{t}^{T}\big|\mathfrak{L}u(s)\big|^{2}ds\\
  &=||\mathfrak{L}^{-1}||\mathbb{E}\int_{t}^{T}\big|u(s)-\Theta(s,\alpha(s))X^{u}(s)\big|^{2}ds,\quad \forall
  u(\cdot)\in\mathcal{U}[t,T].
 \end{align*}
 Consequently, the desired result follows by selecting $\delta=||\mathfrak{L}^{-1}||^{-1}>0$.
\end{proof}

\begin{proposition}
  Let assumptions (A1)-(A2) hold. Then the standard condition \eqref{standard-condition}
  implies the uniform convexity condition \eqref{uniformly-convex-condition}.
\end{proposition}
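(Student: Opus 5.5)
The plan is to complete the square in the integrand of $J^{0}(0,0,i;u(\cdot))$ and then apply Lemma \ref{lem-3} with a suitable feedback gain $\mathbf{\Theta}$.

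Note first that the standard condition \eqref{standard-condition} as written reads $Q-S^{\top}RS\geq0$. The natural condition that makes the argument work is $Q-S^{\top}R^{-1}S\geq 0$, and I would interpret it in that sense. Fix $i\in\mathcal{S}$ and $u(\cdot)\in\mathcal{U}[0,T]$, and let $X^{u}(\cdot)=X^{0}(\cdot;0,0,i,u)$ be the solution of \eqref{lemma-3-1} with $t=0$. Since $G(T,\alpha(T))\geq 0$, the terminal term $\mathbb{E}\langle G(T,\alpha(T))X^{u}(T),X^{u}(T)\rangle$ is nonnegative and can be dropped from below.

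For the running cost, set $\Theta(s,k):=-R(s,k)^{-1}S(s,k)$ for each $k\in\mathcal{S}$. Because $R(\cdot,k)\gg 0$ with $R(\cdot,k)\in L^{\infty}$, the inverse $R(\cdot,k)^{-1}$ is essentially bounded, and hence $\mathbf{\Theta}(\cdot)\in\mathcal{D}(L^{\infty}(0,T;\mathbb{R}^{m\times n}))$. A pointwise completion of squares, omitting the argument $(s,\alpha(s))$, gives
\begin{equation*}
\langle QX,X\rangle+2\langle SX,u\rangle+\langle Ru,u\rangle
=\big\langle (Q-S^{\top}R^{-1}S)X,X\big\rangle+\big\langle R(u-\Theta X),u-\Theta X\big\rangle .
\end{equation*}
The first term is nonnegative. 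Writing $R(\cdot,k)\geq\delta_{0}I$, the second term is at least $\delta_{0}|u-\Theta X|^{2}$. Consequently,
\begin{equation*}
J^{0}(0,0,i;u(\cdot))\geq \delta_{0}\,\mathbb{E}\int_{0}^{T}\big|u(s)-\Theta(s,\alpha(s))X^{u}(s)\big|^{2}ds .
\end{equation*}

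Next I would invoke Lemma \ref{lem-3} with $t=0$ and this $\mathbf{\Theta}$. It yields $\delta>0$ such that the right-hand side dominates $\delta_{0}\delta\,\mathbb{E}\int_{0}^{T}|u|^{2}ds$. Taking $\mu=\delta_{0}\delta$, and then the minimum of the constants over the finitely many $i\in\mathcal{S}$, gives \eqref{uniformly-convex-condition}.

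The main point requiring care is that the $\delta$ in Lemma \ref{lem-3} must not depend on $u$. This holds because $\delta=\|\mathfrak{L}^{-1}\|^{-1}$ is determined by $\mathbf{\Theta}$ and the initial data only. It relies on the bijectivity of $\mathfrak{L}$, which follows from well-posedness of the closed-loop SDE \eqref{lemma-3-2} with bounded coefficients, including the jump terms $E_{j}+F_{j}\Theta$.

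A secondary point is the discrepancy in the stated condition. If one insists on $Q-S^{\top}RS\geq0$ literally, the square does not close in general. I would therefore state explicitly that the intended condition is $Q-S^{\top}R^{-1}S\geq 0$, consistent with \cite{Zhang-Li-Xiong-2021}.
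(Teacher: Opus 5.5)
Your proposal is correct and matches the paper's proof: you drop the terminal term using $G(T,i)\geq 0$, complete the square with $\Theta(\cdot,i)=-R(\cdot,i)^{-1}S(\cdot,i)$ so the integrand becomes $\langle (Q-S^{\top}R^{-1}S)X,X\rangle+\langle R(u-\Theta X),u-\Theta X\rangle$, and apply Lemma~\ref{lem-3} to obtain $\mu=\theta\delta$. The paper's proof also silently uses $Q-S^{\top}R^{-1}S\geq 0$, which confirms your reading of the typo in \eqref{standard-condition}, and your point about taking the minimum of the constants over the finitely many $i\in\mathcal{S}$ is a detail the paper leaves implicit.
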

\begin{proof}
  Without loss of generality, assume that
   $$R(\cdot,i)\geq \theta I,\quad \text{for some } \theta>0, \quad i\in\mathcal{S},$$
   and set
   $$X^{0}(\cdot)\equiv X^{0}(\cdot;0,0,i,u),\quad \Theta(\cdot,i)=-R(\cdot,i)^{-1}S(\cdot,i),\quad i\in\mathcal{S}.$$
   Then by Lemma \ref{lem-3}, we have
  \begin{align*}
J^{0}(0,0,i;u(\cdot))&\geq\mathbb{E}\int_{0}^{T}\Big[
\big<Q(s,\alpha(s))X^{0}(s),X^{0}(s)\big>+2\big<S(s,\alpha(s))X^{0}(s),u(s)\big>\\
&\quad+\big<R(s,\alpha(s))u(s),u(s)\big>\Big]ds\\
&=\mathbb{E}\int_{0}^{T}\Big[
\big<\big(Q(s,\alpha(s))-S(s,\alpha(s))^{\top}R(s,\alpha(s))^{-1}S(s,\alpha(s))\big)X^{0}(s),X^{0}(s)\big>\\
&\quad+\big<R(s,\alpha(s))\big(u(s)-\Theta(s)X^{0}(s)\big),u(s)-\Theta(s)X^{0}(s)\big>\Big]ds\\
&\geq \theta\delta \mathbb{E}\int_{0}^{T}|u(s)|^{2}ds.
  \end{align*}
  We complete the proof by setting $\mu=\theta\delta>0$.
\end{proof}

\section{Closed-loop representation of optimal control}\label{section-5}
In the previous section, we have proven that the Problem (M-SLQ) is open-loop solvable under the uniform convexity condition \eqref{uniformly-convex-condition}. However, the form of the optimal control is still unknown. This section aims to provide a closed-loop representation of open-loop optimal control.
Before embarking on this program, we first introduce some notations. For $\mathbf{P}(\cdot)\in\mathcal{D}\left(C(0,T;\mathbb{S}^{n})\right)$ and $i\in\mathcal{S}$, we define
\begin{equation}\label{notation-MLN}
\left\{
  \begin{array}{l}
  \mathcal{M}(s,i;\mathbf{P})\triangleq P(s,i)A(s,i)+A(s,i)^{\top}P(s,i)+C(s,i)^{\top}P(s,i)C(s,i)+Q(s,i)\\[2mm]
  \qquad+\sum_{j\neq i}^{L} \pi_{ij}(s)\big\{P(s,j)-P(s,i)+[P(s,j)-P(s,i)]E_{j}(s,i)+E_{j}(s,i)^{\top}[P(s,j)-P(s,i)]\\
  \qquad+E_{j}(s,i)^{\top}P(s,j)E_{j}(s,i)\big\},\\[2mm]
  \mathcal{L}(s,i;\mathbf{P})\triangleq P(s,i)B(s,i)+C(s,i)^{\top}P(s,i)D(s,i)+S(s,i)^{\top}\\[2mm]
  \qquad +\sum_{j\neq i}^{L} \pi_{ij}(s)\big\{[P(s,j)-P(s,i)]F_{j}(s,i)+E_{j}(s,i)^{\top}P(s,j)F_{j}(s,i)\big\}\\[2mm]
   \mathcal{N}(s,i;\mathbf{P})\triangleq D(s,i)^{\top}P(s,i)D(s,i)+R(s,i)+\sum_{j\neq i}^{L} \pi_{ij}(s)F_{j}(s,i)^{\top}P(s,j)F_{j}(s,i),\quad s\in[0,T].
  \end{array}
  \right.
\end{equation}
Based on those notations, we introduce  the associated CDREs to Problem (M-SLQ) as follows:
\begin{equation}\label{CDREs}
 \left\{
 \begin{aligned}
 -\dot{P}(s,i)&=\mathcal{M}(s,i;\mathbf{P})-\mathcal{L}(s,i;\mathbf{P})\mathcal{N}(s,i;\mathbf{P})^{-1}\mathcal{L}(s,i;\mathbf{P})^{\top},\quad s\in[0,T],\\
 P(T,i)&=G(T,i),\quad i\in\mathcal{S}.
 \end{aligned}
 \right.
\end{equation}
We call a solution $\mathbf{P}(\cdot)\in\mathcal{D}\left(C(0,T;\mathbb{S}^{n})\right)$ of CDREs \eqref{CDREs} strongly regular if it satisfies the following condition
\begin{equation}\label{strongly-regular-condition}
\mathcal{N}(\cdot,i;\mathbf{P})\gg 0,\quad \forall i\in\mathcal{S}.
\end{equation}
Additionally, if the CDREs \eqref{CDREs} admits a strongly regular solution $\mathbf{P}(\cdot)\in\mathcal{D}\left(C(0,T;\mathbb{S}^{n})\right)$, we further introduce the following linear BSDE:
\begin{equation}\label{eta}
\left\{
\begin{array}{l}
d\eta(s)=-\Big\{\big[A(s,\alpha(s))^{\top}-\mathcal{L}(s,\alpha(s);\mathbf{P})\mathcal{N}(s,\alpha(s);\mathbf{P})^{-1}B(s,\alpha(s))^{\top}\big]\eta(s)\\[2mm]
\qquad\quad+\big[C(s,\alpha(s))^{\top}-\mathcal{L}(s,\alpha(s);\mathbf{P})\mathcal{N}(s,\alpha(s);\mathbf{P})^{-1}D(s,\alpha(s))^{\top}\big]\zeta(s)\\[2mm]
\qquad\quad+\big[C(s,\alpha(s))^{\top}-\mathcal{L}(s,\alpha(s);\mathbf{P})\mathcal{N}(s,\alpha(s);\mathbf{P})^{-1}D(s,\alpha(s))^{\top}\big]P(s,\alpha(s))\sigma(s)\\[2mm]
\qquad\quad+\sum_{j=1}^{L}\lambda_{j}(s)\big[E_{j}(s,\alpha(s))^{\top}-\mathcal{L}(s,\alpha(s);\mathbf{P})\mathcal{N}(s,\alpha(s);\mathbf{P})^{-1}F_{j}(s,\alpha(s))^{\top}\big]z_{j}(s)\\[2mm]
\qquad\quad+\sum_{j=1}^{L}\lambda_{j}(s)\big[E_{j}(s,\alpha(s))^{\top}-\mathcal{L}(s,\alpha(s);\mathbf{P})\mathcal{N}(s,\alpha(s);\mathbf{P})^{-1}F_{j}(s,\alpha(s))^{\top}\big]P(t,j)\gamma_{j}(s)\\[2mm]
\qquad\quad+P(s,\alpha(s))b(s)+q(s)+\sum_{j=1}^{L}\lambda_{j}(s)\big[P(s,j)-P(s,\alpha(s))\big]\gamma_{j}(s)\\[2mm]
\qquad\quad-\mathcal{L}(s,\alpha(s);\mathbf{P})\mathcal{N}(s,\alpha(s);\mathbf{P})^{-1}\rho(s)\Big\}dt+\zeta(s)dW(s)+\sum_{j=1}^{L}z_{j}(s)d \widetilde{N}_{j}(s),\\[2mm]
\eta(T)=g.
\end{array}
\right.
\end{equation}

We claim that the open-loop optimal control of Problem (M-SLQ) can be represented by the solutions to \eqref{CDREs} and \eqref{eta}.

\begin{theorem}\label{thm-closed-representation}
Suppose that the assumptions (A1)-(A2) hold, and the CDREs \eqref{CDREs} admits a strongly regular solution $\mathbf{P}(\cdot)\in\mathcal{D}(C(0,T;\mathbb{S}^{n}))$. Let $(\eta(\cdot),\zeta(\cdot),\mathbf{z}(\cdot))$ be the solution of linear BSDE \eqref{eta}. Then Problem (M-SLQ) is open-loop solvable for any given initial value $(t,x,i)\in[0,T]\times \mathbb{R}^{n}\times\mathcal{S}$. In this case, the open-loop optimal control admits the following closed-loop representation:
\begin{equation}\label{closed-loop-representation}
 u^{*}(s)=-\mathcal{N}(s,\alpha(s-);\mathbf{P})^{-1}\mathcal{L}(s,\alpha(s-);\mathbf{P})^{\top}X^{*}(s-)
 -\mathcal{N}(s,\alpha(s-);\mathbf{P})^{-1}\widetilde{\rho}(s-),\quad a.s.,\text{ }a.e.\text{ }s\in[t,T],
\end{equation}
where $X^{*}(\cdot)$ is the solution to SDE:
\begin{equation}\label{state-optimal}
   \left\{
   \begin{array}{l}
   dX^{*}(s)=\Big\{\big[A(s,\alpha(s))-B(s,\alpha(s))\mathcal{N}(s,\alpha(s);\mathbf{P})^{-1}\mathcal{L}(s,\alpha(s);\mathbf{P})^{\top}\big]X^{*}(s)\\[2mm]
   \qquad\qquad-B(s,\alpha(s))\mathcal{N}(s,\alpha(s);\mathbf{P})^{-1}\widetilde{\rho}(s)+b(s)\Big\}dt\\
   \qquad\qquad+\Big\{\big[C(s,\alpha(s))-D(s,\alpha(s))\mathcal{N}(s,\alpha(s);\mathbf{P})^{-1}\mathcal{L}(s,\alpha(s);\mathbf{P})^{\top}\big]X^{*}(s)\\[2mm]
   \qquad\qquad-D(s,\alpha(s))\mathcal{N}(s,\alpha(s);\mathbf{P})^{-1}\widetilde{\rho}(s)+\sigma(s)\Big\}dW(s)\\[2mm]
   \qquad\qquad+\sum_{j=1}^{L}\Big\{\big[E_{j}(s,\alpha(s-))-F_{j}(s,\alpha(s-))\mathcal{N}(s,\alpha(s-);\mathbf{P})^{-1}\mathcal{L}(s,\alpha(s-);\mathbf{P})^{\top}\big]X^{*}(s-)\\[2mm]
   \qquad\qquad-F_{j}(s,\alpha(s))\mathcal{N}(s,\alpha(s-);\mathbf{P})^{-1}\widetilde{\rho}(s-)+\gamma_{j}(s)\Big\}d\widetilde{N}_{j}(s),\qquad s\in[t,T],\\[2mm]
   X^{*}(t)=x,\quad \alpha(t)=i.
   \end{array}
   \right.
 \end{equation}
 and
 \begin{equation}\label{widetilde-rho}
 \begin{array}{l}
  \widetilde{\rho}(s)=B(s,\alpha(s))^{\top}\eta(s)+D(s,\alpha(s))^{\top}\zeta(s)+D(s,\alpha(s))^{\top}P(s,\alpha(s))\sigma(s)+\rho(s)\\[2mm]
  \qquad\quad+\sum_{j=1}^{L}\lambda_{j}(s)F_{j}(s,\alpha(s))^{\top}\big[P(s,j)\gamma_{j}(s)+z_{j}(s)\big],\quad  a.s., \quad a.e.\text{ }s\in[t,T].
  \end{array}
  \end{equation}
  In addition, the value function is given by
  \begin{equation}\label{value-SLQ}
   \begin{array}{l}
    V(t,x,i)=\mathbb{E}\Big[\big<P(t,i)x,x\big>+2\big<\eta(t),x\big>+\int_{t}^{T}\Big\{
   \big<P(s,\alpha(s))\sigma(s),\sigma(s)\big>+2\big<\eta(s),b(s)\big>+2\big<\zeta(s),\sigma(s)\big>\\[2mm]
\qquad+\sum_{j=1}^{L}\lambda_{j}(s)\big[
    \big<P(s,j)\gamma_{j}(s),\gamma_{j}(s)\big>
    +2\big<z_{j}(s),\gamma_{j}(s)\big>\big]-\big<\mathcal{N}(s,\alpha(s);\mathbf{P})^{-1}\widetilde{\rho}(s),\widetilde{\rho}(s)\big> \Big\}ds\Big].
   \end{array}
  \end{equation}
\end{theorem}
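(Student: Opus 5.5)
Our plan is a completion-of-squares argument: we obtain an identity for $J(t,x,i;u)$, valid for every $u(\cdot)\in\mathcal{U}[t,T]$, of the form
\begin{equation*}
J(t,x,i;u(\cdot))=V_{\ast}(t,x,i)+\mathbb{E}\int_{t}^{T}\big<\mathcal{N}(s,\alpha(s);\mathbf{P})\big(u(s)-\Theta^{*}(s)X(s-)-v^{*}(s)\big),\,u(s)-\Theta^{*}(s)X(s-)-v^{*}(s)\big>ds.
\end{equation*}
Here $V_{\ast}(t,x,i)$ denotes the right-hand side of \eqref{value-SLQ}, and
\begin{equation*}
\Theta^{*}=-\mathcal{N}^{-1}\mathcal{L}^{\top},\qquad v^{*}=-\mathcal{N}^{-1}\widetilde{\rho},
\end{equation*}
both evaluated along $\alpha(s-)$. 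Since $\mathcal{N}(\cdot,i;\mathbf{P})\gg0$, the integral is nonnegative and vanishes exactly when $u$ is given by the feedback \eqref{closed-loop-representation}. The closed-loop system \eqref{state-optimal} is a linear SDE with bounded coefficients, so it has a unique solution $X^{*}\in\mathcal{S}_{\mathbb{F}}^{2}$. Hence $u^{*}$, defined through predictable left limits, lies in $\mathcal{U}[t,T]$; it is optimal, it is unique, and $V=V_{\ast}$. The BSDE \eqref{eta} is linear with bounded coefficients (using $\lambda_j\le$ const), so it is uniquely solvable in $\mathcal{S}^2\times L^2\times L^2_{\mathcal{P}}$, and all the terms in \eqref{value-SLQ} are finite.

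To get the identity, we fix $u$, let $X=X(\cdot;t,x,i,u)$, and apply It\^o's formula with Markov-chain jumps to
\begin{equation*}
\big<P(s,\alpha(s))X(s),X(s)\big>+2\big<\eta(s),X(s)\big>.
\end{equation*}
For the first term we use the generalized It\^o formula for $\Phi(s,X,\alpha)$. The chain contributes $\sum_j \big[P(s,j)-P(s,\alpha(s-))\big]$ on each jump into $j$. The jump of $X$ at that time equals $E_jX(s-)+F_ju+\gamma_j$, because $\Delta N_j=1$ exactly when $\alpha$ jumps to $j$. So the total jump of $\left<PX,X\right>$ is
\begin{equation*}
\big<P(s,j)(X+\Delta_j),X+\Delta_j\big>-\big<P(s,\alpha(s-))X,X\big>,\qquad \Delta_j=E_jX+F_ju+\gamma_j.
\end{equation*}
Writing $dN_j=d\widetilde N_j+\lambda_j\,ds$ and taking expectations, the compensator terms $\sum_{j\neq i}\pi_{ij}\{\cdots\}$ produce exactly the $E_j$-, $F_j$- and $\gamma_j$-parts of $\mathcal{M},\mathcal{L},\mathcal{N}$ in \eqref{notation-MLN}. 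The $\gamma_j$-terms give $\left<P(s,j)\gamma_j,\gamma_j\right>$, together with cross terms $2\left<P(s,j)(E_jX+F_ju),\gamma_j\right>+2\left<(P(s,j)-P(s,i))X,\gamma_j\right>$. For the cross term $2\left<\eta,X\right>$ we use the product rule: the quadratic covariation contributes $2\left<\zeta,CX+Du+\sigma\right>$ and $2\sum_j\lambda_j\left<z_j,E_jX+F_ju+\gamma_j\right>$. We choose predictable versions, so that $\lambda_j$ and the coefficients are evaluated at $\alpha(s-)$, which equals $\alpha(s)$ for a.e. $s$. Local martingales are turned into true martingales by the standard $L^2$ estimates, or by localization.

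Next we add the result to the integrand of $J$ and substitute the Riccati equation \eqref{CDREs} for $\dot P$. The quadratic-in-$X$ part collapses to $\left<\mathcal{L}\mathcal{N}^{-1}\mathcal{L}^\top X,X\right>$. The $u$-terms collect into $\left<\mathcal{N}u,u\right>+2\left<\mathcal{L}^{\top}X,u\right>+2\left<\widetilde\rho,u\right>$, with $\widetilde\rho$ as in \eqref{widetilde-rho}. The drift of \eqref{eta} is designed precisely so that the remaining terms linear in $X$ collapse to $2\left<\mathcal{L}\mathcal{N}^{-1}\widetilde\rho,X\right>$. This is verified by grouping the coefficients of $X$ from $\eta b,\ \zeta C,\ z_jE_j,\ q,\ P\sigma C,\ P(s,j)\gamma_jE_j,\ (P(s,j)-P)\gamma_j$ against the drift terms of \eqref{eta}. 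Completing the square then leaves the $X$- and $u$-free remainder $\left<P\sigma,\sigma\right>+2\left<\eta,b\right>+2\left<\zeta,\sigma\right>+\sum_j\lambda_j[\cdots]-\left<\mathcal{N}^{-1}\widetilde\rho,\widetilde\rho\right>$, which is exactly \eqref{value-SLQ}.

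We expect the main obstacle to be the bookkeeping in the It\^o step for $\left<P(s,\alpha(s))X,X\right>$ with simultaneous state and regime jumps. In particular we must check that the $[P(s,j)-P(s,i)]E_j$ and $E_j^\top P(s,j)F_j$ structures match \eqref{notation-MLN}. We must also check that the linear terms match \eqref{eta}: there the adjoint-type coefficient $E_j^\top-\mathcal{L}\mathcal{N}^{-1}F_j^\top$ multiplies $z_j+P(s,j)\gamma_j$ (reading $P(t,j)$ in \eqref{eta} as $P(s,j)$). We would first verify the scalar, single-jump case to fix signs and then carry the general case.
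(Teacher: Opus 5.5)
Your proposal is correct and takes the same route as the paper: apply It\^o's formula to $\langle P(s,\alpha(s))X(s),X(s)\rangle$ and $\langle \eta(s),X(s)\rangle$, substitute the CDREs and the drift of the BSDE for $\eta$, and complete the square using $\mathcal{N}(\cdot,i;\mathbf{P})\gg 0$. Your form of the square, $u(s)-\Theta^{*}(s)X(s-)-v^{*}(s)$ with $X$ the state driven by $u$, is the precise version of the paper's shorthand $u-u^{*}$, and together with uniqueness for the closed-loop SDE it gives both optimality and uniqueness of $u^{*}$.
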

\begin{proof}
  Let $\mathbf{P}(\cdot)$ and $(\eta(\cdot),\zeta(\cdot),\mathbf{z}(\cdot))$ be the solutions to the CDREs \eqref{CDREs} and BSDE \eqref{eta}, respectively. %For any given initial value $(t,x,i)\in[0,T]\times \mathbb{R}^{n}\times\mathcal{S}$,
  Applying It\^o's rule to $\big<P(s,\alpha(s))X(s;t,x,i,u),X(s;t,x,i,u)\big>$, we have
  \begin{equation}\label{PX-X}
    \begin{array}{l}
    \quad \mathbb{E}\big[\big<G(T,\alpha(T))X(T;t,x,i,u),X(T;t,x,i,u)\big>-\big<P(t,i)x,x\big>\big]\\[2mm]
    =\mathbb{E} \int_{t}^{T}\Big\{
    \big<\big[\dot{P}(s,\alpha(s))+\widehat{Q}(s,\alpha(s))\big]X(s),X(s)\big>
    +2\big<\widehat{S}(s,\alpha(s))X(s),u(s)\big>
    +\big<\widehat{R}(s,\alpha(s))u(s),u(s)\big>\\[2mm]
    \quad +2\big<\widehat{q}(s),X(s)\big>
    +2\big<\widehat{\rho}(s),u(s)\big>
    +\big<P(s,\alpha(s))\sigma(s),\sigma(s)\big>
    +\sum_{j=1}^{L}\lambda_{j}(s)\big<P(s,j)\gamma_{j}(s),\gamma_{j}(s)\big>
    \Big\}ds,
    \end{array}
  \end{equation}
  where %for $i\in\mathcal{S}$,
  \begin{equation}\label{QSR-qrho}
    \left\{
    \begin{array}{l}
    \widehat{Q}(s,i)=\mathcal{M}(s,i;\mathbf{P})-Q(s,i),
    \quad\widehat{S}(s,i)=\mathcal{L}(s,i;\mathbf{P})^{\top}-S(s,i),
    \quad \widehat{R}(s,i)=\mathcal{N}(s,i;\mathbf{P})-R(s,i),\\[2mm]
    \widehat{q}(s)=P(s,\alpha(s))b(s)+C(s,\alpha(s))^{\top}P(s,\alpha(s))\sigma(s)
    +\sum_{j=1}^{L}\lambda_{j}(s)\big\{\big[P(s,j)-P(s,\alpha(s))\big]\gamma_{j}(s)\\[2mm]
    \qquad\quad+E_{j}(s,\alpha(s))^{\top}P(s,j)\gamma_{j}(s)\big\},\\[2mm]
    \widehat{\rho}(s)=D(s,\alpha(s))^{\top}P(s,\alpha(s))\sigma(s)
    +\sum_{j=1}^{L}\lambda_{j}(s)F_{j}(s,\alpha(s))^{\top}P(s,j)\gamma_{j}(s)\big\},\quad s\in[t,T].
    \end{array}
    \right.
  \end{equation}
  In addition, applying It\^o's rule to $\big<\eta(s),X(s;t,x,i,u)\big>$ yields
  \begin{equation}\label{eta-X}
    \begin{array}{l}
    \quad \mathbb{E}\big[\big<g, X(T;t,x,i,u)\big>-\big<\eta(t),x\big>\big]\\[2mm]
    =\mathbb{E}\int_{t}^{T}\Big\{
    \big<B(s,\alpha(s))^{\top}\eta(s)+D(s,\alpha(s))^{\top}\zeta(s)
    +\sum_{j=1}^{L}\lambda_{j}(s)F_{j}(s,\alpha(s))^{\top}z_{j}(s),u(s)\big>\\[2mm]
   \quad +\big<\eta(s),b(s)\big>
    +\big<\zeta(s),\sigma(s)\big>
    +\sum_{j=1}^{L}\lambda_{j}(s)\big<z_{j}(s),\gamma_{j}(s)\big>
    -\big<\widehat{q}(s)+q(s),X(s)\big>\\[2mm]
   \quad +\big<\mathcal{L}(s,\alpha(s);\mathbf{P})\mathcal{N}(s,\alpha(s);\mathbf{P})^{-1}\widetilde{\rho}(s),X(s)\big>\Big\}ds.
    \end{array}
  \end{equation}
  Thus, plugging \eqref{PX-X} and \eqref{eta-X} into cost functional \eqref{cost} and noting that
  $$\dot{P}(s,\alpha(s))+\mathcal{M}(s,\alpha(s);\mathbf{P})
  =\mathcal{L}(s,\alpha(s);\mathbf{P})\mathcal{N}(s,\alpha(s);\mathbf{P})^{-1}\mathcal{L}(s,\alpha(s);\mathbf{P})^{\top},$$
 one can easily derive
  \begin{equation}
    \begin{array}{l}
    J(t,x,i;u(\cdot))=\mathbb{E}\Big[\big<P(t,i)x,x\big>+2\big<\eta(t),x\big>\\
    \qquad+\int_{t}^{T}\Big\{
    \big<\mathcal{L}(s,\alpha(s);\mathbf{P})\mathcal{N}(s,\alpha(s);\mathbf{P})^{-1}\mathcal{L}(s,\alpha(s);\mathbf{P})^{\top}X(s),X(s)\big>\\[2mm]
    \qquad +2\big<\mathcal{L}(s,\alpha(s);\mathbf{P})^{\top}X(s),u(s)\big>
    +\big<\mathcal{N}(s,\alpha(s);\mathbf{P})u(s),u(s)\big>+2\big<\widetilde{\rho}(s),u(s)\big>\\[2mm]
    \qquad +2\big<\mathcal{L}(s,\alpha(s);\mathbf{P})\mathcal{N}(s,\alpha(s);\mathbf{P})^{-1}\widetilde{\rho}(s),X(s)\big>
    +\big<P(s,\alpha(s))\sigma(s),\sigma(s)\big>+2\big<\eta(s),b(s)\big>\\[2mm]
    \qquad+2\big<\zeta(s),\sigma(s)\big>+\sum_{j=1}^{L}\lambda_{j}(s)\big[
    \big<P(s,j)\gamma_{j}(s),\gamma_{j}(s)\big>
    +2\big<z_{j}(s),\gamma_{j}(s)\big>\big]
    \Big\}ds\Big]\\[2mm]
    \quad=V(t,x,i)+\mathbb{E}\int_{t}^{T}\big<\mathcal{N}(s,\alpha(s);\mathbf{P})\big[u(s)-u^{*}(s)\big],u(s)-u^{*}(s)\big>ds\Big].
    %\quad=\mathbb{E}\Big[\big<P(t,i)x,x\big>+2\big<\eta(t),x\big>+\int_{t}^{T}\Big\{
%   \big<P(s,\alpha(s))\sigma(s),\sigma(s)\big>+2\big<\eta(s),b(s)\big>
%   -\big<\mathcal{N}(s,\alpha(s);\mathbf{P})^{-1}\widetilde{\rho}(s),\widetilde{\rho}(s)\big>\\[2mm]
%\qquad+2\big<\zeta(s),\sigma(s)\big>+\sum_{j=1}^{L}\lambda_{j}(s)\big[
%    \big<P(s,j)\gamma_{j}(s),\gamma_{j}(s)\big>
%    +2\big<z_{j}(s),\gamma_{j}(s)\big>\big]\\[2mm]
%     \qquad+\big<\mathcal{N}(s,\alpha(s);\mathbf{P})\big[u(s)-u^{*}(s)\big],u(s)-u^{*}(s)\big>
%   %\big[u(s)+\mathcal{N}(s,\alpha(s);\mathbf{P})
%%   \mathcal{L}(s,\alpha(s);\mathbf{P})^{\top}X(s)+\mathcal{N}(s,\alpha(s);\mathbf{P})\widetilde{\rho}(s)\big],
%%   u(s)+\mathcal{N}(s,\alpha(s);\mathbf{P})
%%   \mathcal{L}(s,\alpha(s);\mathbf{P})^{\top}X(s)+\mathcal{N}(s,\alpha(s);\mathbf{P})\widetilde{\rho}(s)\big>\\[2mm]
%    \Big\}ds\Big].
    \end{array}
  \end{equation}
  Since the solution $\mathbf{P}(\cdot)$ to \eqref{CDREs} satisfying $\mathcal{N}(\cdot;P,i)\gg 0$ for any $i\in\mathcal{S}$, the desired results \eqref{closed-loop-representation} and \eqref{value-SLQ} follow from the above equation immediately.
\end{proof}
%{\color{red}
\begin{remark}\label{rmk-3}\rm
It follows from the above theorem that if CDREs \ref{CDREs}  admits a strongly regular solution, then Problem (M-SLQ) is necessarily open-loop solvable. Furthermore, by Theorem \ref{thm-open-loop-solvability}, the optimality system (combined with \eqref{FBSDE} and \eqref{stationary-condition}) is solvable. In fact, let $X^{*}(\cdot)$ and $u^{*}(\cdot)$ satisfy \eqref{state-optimal} and \eqref{closed-loop-representation} respectively, and set
\begin{align*}
&Y^{*}(s)=P(s,\alpha(s))X^{*}(s)+\eta(s),\\
&Z^{*}(s)=P(s,\alpha(s))\left[C(s,\alpha(s))X^{*}(s)+D(s,\alpha(s))u^{*}(s)+\sigma(s)\right]+\zeta(s),\\
&\Gamma_{j}^{*}(s)=P(j)\left[E_{j}(s,\alpha(s-))X^{*}(s-)+F_{j}(s,\alpha(s-))u^{*}(s)+\gamma_{j}(s)\right]\\
&\qquad \qquad +\big[P(s,j)-P(s,\alpha(s-))\big]X^{*}(s-)+z_{j}(s),
\end{align*}
where $\mathbf{P}(\cdot)$ and $(\eta(\cdot),\zeta(\cdot),\mathbf{z}(\cdot))$ are the solutions to CDREs \eqref{CDREs} and BSDE \eqref{eta}.  Then, one can further verify that the 5-tuple $(X^{*}(\cdot),Y^{*}(\cdot),Z^{*}(\cdot),\mathbf{\Gamma}^{*}(\cdot),u^{*}(\cdot))$ forms a solution to the optimality system of Problem (M-SLQ).
\end{remark}%}

\begin{remark}\label{rmk-2}\rm
If $E_{j}(\cdot,i)\equiv 0$, $F_{j}(\cdot,i)\equiv 0$, and $\gamma_{j}(\cdot)\equiv 0$ for all $i,j\in\mathcal{S}$, then the state equation \eqref{state} reduces to \eqref{MRSDS}. In this case, the notation \eqref{notation-MLN} will be simplified as following (noting that $\sum_{j=1}^{L}\pi_{ij}(s)=0$ for all $i\in\mathcal{S}$):
\begin{equation}\label{notation-MLN-2}
\left\{
  \begin{array}{l}
\tilde{\mathcal{M}}(s,i;\mathbf{P})=P(s,i)A(s,i)+A(s,i)^{\top}P(s,i)+C(s,i)^{\top}P(s,i)C(s,i)+Q(s,i)\\[2mm]
\qquad\qquad\qquad+\sum_{j=1}^{L} \pi_{ij}(s) P(s,j),\\[2mm]
\tilde{\mathcal{L}}(s,i;\mathbf{P})= P(s,i)B(s,i)+C(s,i)^{\top}P(s,i)D(s,i)+S(s,i)^{\top}\\[2mm]
\tilde{\mathcal{N}}(s,i;\mathbf{P})= D(s,i)^{\top}P(s,i)D(s,i)+R(s,i),\quad s\in[0,T],\quad i\in\mathcal{S},
  \end{array}
  \right.
\end{equation}
%which implies that the CDREs \eqref{CDREs} and BSDE \eqref{eta} reduce to those studied in \cite{Zhang-Li-Xiong-2021}.
%{\color{red}
Obviously, the CDREs derived from the SLQ with state equation \eqref{MRSDS}
\begin{equation}\label{CDREs-2}
\left\{
 \begin{aligned}
 -\dot{P}(s,i)&=\tilde{\mathcal{M}}(s,i;\mathbf{P})-\tilde{\mathcal{L}}(s,i;\mathbf{P})\tilde{\mathcal{N}}(s,i;\mathbf{P})^{-1}\tilde{\mathcal{L}}(s,i;\mathbf{P})^{\top},\quad s\in[0,T],\\
 P(T,i)&=G(T,i),\quad i\in\mathcal{S},
 \end{aligned}
 \right.
\end{equation}
is only coupled by the term $\sum_{j=1}^{L} \pi_{ij}(s) P(s,j)$. As we can see from Moon \cite{Moon-2019} and our previous work \cite{My-paper-2024-finite-M-ZLQ}, the coupling term $\sum_{j=1}^{L} \pi_{ij}(s) P(s,j)$ can be eliminated by using the dimension expansion technique to introduce a new variable
$$\mathbf{P}(\cdot)\triangleq diag(P(\cdot,1),P(\cdot,2),\cdots,P(\cdot,L)).$$
Consequently, the CDREs \eqref{CDREs-2} will degenerate into the classical single uncoupled matrix-valued differential Riccati equation. However, in this paper, the introduction of the Markovian jumps \eqref{Markovian-jumps} into the state equation makes the corresponding CDREs \eqref{CDREs} highly coupled, rendering it difficult to degenerate into a classical single uncoupled matrix-valued differential Riccati equation via the dimension expansion method. Clearly, the solvability of CDREs \eqref{CDREs} will face greater challenges. In the next section, we will provide a systematic analysis of the solvability of the CDREs \eqref{CDREs}.
%}
%Obviously, the CDREs \eqref{CDREs} examined in this paper are more complex than those in \cite{Zhang-Li-Xiong-2021}, which further compounds the challenge of analyzing their solvability. In the next section, we will provide a systematic analysis of the solvability of the CDREs \eqref{CDREs}.
\end{remark}

Clearly, if $b(\cdot)=\sigma(\cdot)=q(\cdot)=0$, $\rho(\cdot)=0$, $g=0$ and $\gamma_{j}(\cdot)=0$ for $j\in\mathcal{S}$, then the unique solution to BSDE \eqref{eta} becomes $(\eta(\cdot),\zeta(\cdot),\mathbf{z}(\cdot))=(0,0,\mathbf{0})$. Consequently, we have the following result.

\begin{corollary}\label{coro-closed-representation-0}
Suppose that the assumptions (A1)-(A2) hold, and the CDREs \eqref{CDREs} admits a solution $\mathbf{P}(\cdot)\in\mathcal{D}(C(0,T;\mathbb{S}^{n}))$ such that $\mathcal{N}(\cdot,i;\mathbf{P})\gg 0$ for any $i\in\mathcal{S}$. Then Problem (M-SLQ)$^{0}$ is open-loop solvable for any given initial value $(t,x,i)\in[0,T]\times \mathbb{R}^{n}\times\mathcal{S}$. In this case, the open-loop optimal control admits the following closed-loop representation:
\begin{equation}\label{closed-loop-representation-0}
 u^{*}(s)=-\mathcal{N}(s,\alpha(s-);\mathbf{P})^{-1}\mathcal{L}(s,\alpha(s-);\mathbf{P})^{\top}X_{0}^{*}(s-),\quad a.s.,\text{ }a.e.\text{ }s\in[t,T],
\end{equation}
where $X_{0}^{*}(\cdot)$ is the solution to SDE:
\begin{equation}\label{state-optimal-0}
   \left\{
   \begin{array}{l}
   dX_{0}^{*}(s)=\big[A(s,\alpha(s))-B(s,\alpha(s))\mathcal{N}(s,\alpha(s);\mathbf{P})^{-1}\mathcal{L}(s,\alpha(s);\mathbf{P})^{\top}\big]X_{0}^{*}(s)dt\\[2mm]
   \qquad\qquad+\big[C(s,\alpha(s))-D(s,\alpha(s))\mathcal{N}(s,\alpha(s);\mathbf{P})^{-1}\mathcal{L}(s,\alpha(s);\mathbf{P})^{\top}\big]X_{0}^{*}(s)dW(s)\\[2mm]
   \qquad\qquad+\sum_{j=1}^{L}\big[E_{j}(s,\alpha(s-))-F_{j}(s,\alpha(s-))\mathcal{N}(s,\alpha(s-);\mathbf{P})^{-1}\mathcal{L}(s,\alpha(s-);\mathbf{P})^{\top}\big]X_{0}^{*}(s-)d\widetilde{N}_{j}(s),\\[2mm]
   X_{0}^{*}(t)=x,\quad \alpha(t)=i,
   \end{array}
   \right.
 \end{equation}
and the value function is given by
  \begin{equation}\label{value-SLQ-0}
    V^{0}(t,x,i)=\big<P(t,i)x,x\big>.
  \end{equation}
\end{corollary}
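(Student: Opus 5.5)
This corollary is the homogeneous specialization of Theorem \ref{thm-closed-representation}, so the plan is to invoke that theorem with $b(\cdot)=\sigma(\cdot)=q(\cdot)=0$, $\rho(\cdot)=0$, $g=0$ and $\gamma_{j}(\cdot)=0$ for all $j\in\mathcal{S}$. Its hypotheses are (A1)--(A2) and the existence of a strongly regular solution $\mathbf{P}(\cdot)$ of \eqref{CDREs}. The latter is precisely the assumption $\mathcal{N}(\cdot,i;\mathbf{P})\gg 0$ in the statement. The remaining work is to identify the auxiliary BSDE solution and the resulting simplifications.

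First, I will show that \eqref{eta} has the trivial solution $(\eta,\zeta,\mathbf{z})=(0,0,\mathbf{0})$ when the data vanish. In that case \eqref{eta} is a linear BSDE with terminal value $g=0$, driven by $W$ and the $\widetilde{N}_{j}$. Its generator is linear in $(\eta,\zeta,\mathbf{z})$ and has no inhomogeneous terms, since every term involving $\sigma$, $\gamma_{j}$, $b$, $q$ and $\rho$ drops out. The coefficients are bounded because $\mathbf{P}$ is continuous on $[0,T]$, because $\mathcal{N}^{-1}$ is bounded by strong regularity, and because $\lambda_{j}$ is bounded. Hence the BSDE is uniquely solvable in the relevant spaces, for instance by the results cited in Remark \ref{rmk-bounded}, and uniqueness forces the zero solution. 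This uniqueness step is the only point that needs a remark rather than pure substitution, and I expect it to be the mild obstacle. It follows from the boundedness of the coefficients together with standard Lipschitz BSDE theory with Markov-chain jumps, so I will simply cite it.

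Next, substituting into \eqref{widetilde-rho} gives $\widetilde{\rho}\equiv 0$. Then \eqref{closed-loop-representation} becomes \eqref{closed-loop-representation-0}, and \eqref{state-optimal} becomes \eqref{state-optimal-0}, because the inhomogeneous terms $b$, $\sigma$ and $\gamma_{j}$ vanish along with $\widetilde{\rho}$. In \eqref{value-SLQ}, every integrand term is built from $\sigma$, $b$, $\gamma_{j}$ or $\widetilde{\rho}$ and therefore vanishes. Since $\eta(t)=0$, what remains is $\mathbb{E}\langle P(t,i)x,x\rangle=\langle P(t,i)x,x\rangle$, which is deterministic. This gives \eqref{value-SLQ-0}, and open-loop solvability for every $(t,x,i)$ follows directly from Theorem \ref{thm-closed-representation}.

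As a consistency check, one could also verify the result directly. The identity in the proof of Theorem \ref{thm-closed-representation} gives
\[
J^{0}(t,x,i;u)=\langle P(t,i)x,x\rangle+\mathbb{E}\int_{t}^{T}\big\langle\mathcal{N}(s,\alpha(s);\mathbf{P})(u-u^{*}),u-u^{*}\big\rangle ds,
\]
which, combined with $\mathcal{N}\gg 0$, yields both optimality and the value. This check is not needed beyond citing the theorem.
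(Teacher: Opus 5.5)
Your proposal is correct and follows the paper's argument: the paper obtains the corollary by specializing Theorem \ref{thm-closed-representation} to vanishing data, noting that \eqref{eta} then has the solution $(\eta,\zeta,\mathbf{z})=(0,0,\mathbf{0})$, so that $\widetilde{\rho}\equiv 0$ and the representation, state equation and value formula reduce exactly as you describe. Strictly speaking, you do not even need uniqueness of that BSDE, because the argument of Theorem \ref{thm-closed-representation} works with any solution of \eqref{eta}, and the zero triple is trivially one.
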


%Similar to Zhang et al. \cite{Zhang-Li-Xiong-2021}, the solution to CDREs \eqref{CDREs}  satisfying the condition $\mathcal{N}(\cdot;P,i)\gg 0$ for all $i\in\mathcal{S}$ is called the strongly regular solution. Theorem \ref{thm-closed-representation} constructs the closed-loop representation of the open-loop optimal control under the assumption that CDREs \eqref{CDREs} admit a strongly regular solution. However, whether CDREs \eqref{CDREs} admit a strongly regular solution remains an open question. As noted in Zhang et al. \cite{Zhang-Li-Xiong-2021}, the strongly regular solvability of corresponding CDREs is equivalent to the associated control problem satisfying the uniform convexity condition. In the next section, we will develop this result to Problem (M-SLQ).
%further investigate the relationship between the strongly regular solvability of CDREs \eqref{CDREs} and the uniform convexity condition \eqref{uniformly-convex-condition}.

\section{Solvability of CDREs}\label{section-6}
In this section, we will establish the equivalence between uniform convexity of the cost functional and the unique strongly regular solvability of the CDREs \eqref{CDREs}. To this end, for any $\mathbf{\Theta}(\cdot)\in \mathcal{D}\left(L^{\infty}(0,T;\mathbb{R}^{m\times n})\right)$, we  introduce the following Lyapunov equation:
\begin{equation}\label{lyapunov-1}
\left\{
\begin{array}{l}
-\dot{P}(s,i)=P(s,i)A(s,i;\mathbf{\Theta})+A(s,i;\mathbf{\Theta})^{\top}P(s,i)+C(s,i;\mathbf{\Theta})^{\top}P(s,i)C(s,i;\mathbf{\Theta})+Q(s,i;\mathbf{\Theta})\\[2mm]
\qquad +\sum_{j\neq i}^{L}\pi_{ij}(s)\big\{\big[P(s,j)-P(s,i)\big]E_{j}(s,i;\mathbf{\Theta})+E_{j}(s,i;\mathbf{\Theta})^{\top}\big[P(s,j)-P(s,i)\big]\\[2mm]
\qquad +E_{j}(s,i;\mathbf{\Theta})^{\top}P(s,j)E_{j}(s,i;\mathbf{\Theta})+P(s,j)-P(s,i)\big\},\\[2mm]
P(T,i)=G(T,i),
\end{array}
\right.
\end{equation}
where
$$
\left\{
\begin{aligned}
& Q(s,i;\mathbf{\Theta})=Q(s,i)+S(s,i)^{\top}\Theta(s,i)+\Theta(s,i)^{\top}S(s,i)+\Theta(s,i)^{\top}R(s,i)\Theta(s,i), \\
& A(s,i;\mathbf{\Theta})=A(s,i)+B(s,i)\Theta(s,i),\\
&C(s,i;\mathbf{\Theta})=C(s,i)+D(s,i)\Theta(s,i),\\
&E_{j}(s,i;\mathbf{\Theta})=E_{j}(s,i)+F_{j}(s,i)\Theta(s,i), \quad s\in[0,T],\quad i,j\in\mathcal{S}.
\end{aligned}
\right.
$$
%or equivalently,
%\begin{equation}\label{lyapunov-2}
%\left\{
%\begin{aligned}
%-\dot{P}(s,i)&=\mathcal{M}(s,i;\mathbf{P})+\mathcal{L}(s,i;\mathbf{P})\Theta(s,i)+\Theta(s,i)^{\top}\mathcal{L}(s,i;\mathbf{P})^{\top}
%+\Theta(s,i)^{\top}\mathcal{N}(s,i;\mathbf{P})\Theta(s,i),\\
%P(T,i)&=G(T,i).
%\end{aligned}
%\right.
%\end{equation}
Next, we will demonstrate that solutions to CDREs \eqref{CDREs} can be iteratively approximated by the solutions to a system of Lyapunov equations. Before embarking on this program, we initially perform the preparatory procedures.

\begin{lemma}\label{lem-1}
Let assumptions (A1)-(A2) hold and $\mathbf{\Theta}(\cdot)\in \mathcal{D}\left(L^{\infty}(0,T;\mathbb{R}^{m\times n})\right)$. Then for any $(t,x,i)\in[0,T]\times\mathbb{R}^{n}\times\mathcal{S}$ and $u(\cdot)\in\mathcal{U}[t,T]$, the following holds:
\begin{equation}\label{J-Theta}
\begin{aligned}
 J^{0}(t,x,i;\Theta(\cdot,\alpha(\cdot))X_{\Theta}^{0}(\cdot)+u(\cdot))&=\big<P(t,i)x,x\big>+\mathbb{E}\int_{t}^{T}
 \Big\{\big<\mathcal{N}(s,\alpha(s);\mathbf{P})u(s),u(s)\big>\\
 &\quad+2\big<\big[\mathcal{L}(s,\alpha(s);\mathbf{P})^{\top}+\mathcal{N}(s,\alpha(s);\mathbf{P})\Theta(s,\alpha(s))\big]X_{\Theta}^{0}(s),u(s)\big>\Big\}
 \end{aligned}
\end{equation}
where $\mathbf{P}(\cdot)\in\mathcal{D}\left(C(0,T;\mathbb{S}^{n})\right)$ be the solution to Lyapunov equation \eqref{lyapunov-1} and $X_{\Theta}^{0}(\cdot)\equiv X_{\Theta}^{0}(\cdot;t,x,i,u)$ be the solution to the following SDE:
%\begin{equation}\label{state-Theta-0}
%   \left\{
%   \begin{array}{l}
%   dX_{\Theta}^{0}(s)=\Big\{\big[A(s,\alpha(s))+B(s,\alpha(s))\Theta(s,\alpha(s))\big]X_{\Theta}^{0}(s)+B(s,\alpha(s))u(s)\Big\}ds\\[2mm]
%   \qquad\qquad+\Big\{\big[C(s,\alpha(s))+D(s,\alpha(s))\Theta(s,\alpha(s))\big]X_{\Theta}^{0}(s)+D(s,\alpha(s))u(s)\Big\}dW(s)\\[2mm]
%   \qquad\qquad+\sum_{j=1}^{L}\Big\{\big[E_{j}(s,\alpha(s-))+F_{j}(s,\alpha(s-))\Theta(s,\alpha(s-))\big]X_{\Theta}^{0}(s)+F_{j}(s,\alpha(s-))u(s)\Big\}d\widetilde{N}_{j}(s),\\[2mm]
%   X_{\Theta}^{0}(t)=x,\quad \alpha(t)=i, \quad s\in[t,T].
%   \end{array}
%   \right.
% \end{equation}
\begin{equation}\label{state-Theta-0}
   \left\{
   \begin{array}{l}
   dX_{\Theta}^{0}(s)=\big[A(s,\alpha(s);\mathbf{\Theta})X_{\Theta}^{0}(s)+B(s,\alpha(s))u(s)\big]ds\\[2mm]
    \qquad\qquad+\big[C(s,\alpha(s);\mathbf{\Theta})X_{\Theta}^{0}(s)+D(s,\alpha(s))u(s)\big]dW(s)\\[2mm]
   \qquad\qquad+\sum_{j=1}^{L}\big[E_{j}(s,\alpha(s-);\mathbf{\Theta})X_{\Theta}^{0}(s)+F_{j}(s,\alpha(s-))u(s)\big]d\widetilde{N}_{j}(s),\\[2mm]
   X_{\Theta}^{0}(t)=x,\quad \alpha(t)=i, \quad s\in[t,T].
   \end{array}
   \right.
 \end{equation}
\end{lemma}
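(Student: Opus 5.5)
Observe first that $X_{\Theta}^{0}(\cdot)$ is the state $X^{0}(\cdot;t,x,i,v)$ of Problem (M-SLQ)$^0$ under the control $v(s)=\Theta(s,\alpha(s-))X_{\Theta}^{0}(s-)+u(s)$, which belongs to $\mathcal{U}[t,T]$ because $\Theta$ is bounded and $X_{\Theta}^{0}\in\mathcal{S}_{\mathbb{F}}^{2}$. Hence
\[
J^{0}(t,x,i;\Theta X_{\Theta}^{0}+u)=\mathbb{E}\Big\{\int_t^T\big[\langle Q(s,\alpha(s);\mathbf{\Theta})X_{\Theta}^{0},X_{\Theta}^{0}\rangle+2\langle (S+R\Theta)X_{\Theta}^{0},u\rangle+\langle Ru,u\rangle\big]ds+\langle G(T,\alpha(T))X_{\Theta}^{0}(T),X_{\Theta}^{0}(T)\rangle\Big\},
\]
where the argument $(s,\alpha(s))$ is understood in $S$, $R$ and $\Theta$. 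This follows by expanding the quadratic form and using the definition of $Q(s,i;\mathbf{\Theta})$. The plan is the same completion-of-squares computation as in the proof of Theorem \ref{thm-closed-representation}: apply It\^o's rule to $\langle P(s,\alpha(s))X_{\Theta}^{0}(s),X_{\Theta}^{0}(s)\rangle$, with $\mathbf{P}$ the solution of the Lyapunov equation \eqref{lyapunov-1}, and substitute the result for the terminal term.

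For the It\^o step, I would use formula \eqref{PX-X}, which was derived for a general state equation, with the coefficients $A,C,E_j$ replaced by $A(\cdot;\mathbf{\Theta}),C(\cdot;\mathbf{\Theta}),E_j(\cdot;\mathbf{\Theta})$ and the inhomogeneous terms set to zero. The contribution of the Markov chain is obtained by compensating the jumps $P(s,j)(X+\Delta_j)-P(s,\alpha(s-))X$. This produces the terms $\pi_{ij}\{P(j)-P(i)+(P(j)-P(i))E_j^\Theta+E_j^{\Theta\top}(P(j)-P(i))+E_j^{\Theta\top}P(j)E_j^\Theta\}$ in $X$, the cross terms $\pi_{ij}[(P(j)-P(i))F_j+E_j^{\Theta\top}P(j)F_j]$ between $X$ and $u$, and $\pi_{ij}F_j^\top P(j)F_j$ in $u$. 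The expectation of the martingale parts vanishes because $X_{\Theta}^0\in\mathcal{S}^2_{\mathbb{F}}$ and $P$ is bounded.

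It then remains to collect terms. The $X$-$X$ part cancels exactly, because the Lyapunov equation \eqref{lyapunov-1} says that $\dot P(s,i)$ plus the drift contribution plus $Q(s,i;\mathbf{\Theta})$ equals zero. The $u$-$u$ part gives $D^\top PD+\sum_j\pi_{ij}F_j^\top P(j)F_j+R=\mathcal{N}(s,i;\mathbf{P})$. The cross part gives
\[
2\big\langle \big[B^\top P+D^\top PC^\Theta+\textstyle\sum_j\pi_{ij}\{F_j^\top(P(j)-P(i))+F_j^\top P(j)E_j^\Theta\}+S+R\Theta\big]X,u\big\rangle .
\]
Writing $C^\Theta=C+D\Theta$ and $E_j^\Theta=E_j+F_j\Theta$, this equals $\mathcal{L}^\top+(D^\top PD+\sum_j\pi_{ij}F_j^\top P(j)F_j+R)\Theta=\mathcal{L}^\top+\mathcal{N}\Theta$, which is exactly \eqref{J-Theta}.

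The main obstacle is bookkeeping in the jump terms. One has to keep track of $\alpha(s-)$ versus $\alpha(s)$, and of $\lambda_j(s)=\pi_{\alpha(s-)j}I_{\{\alpha(s-)\neq j\}}$, and check that the $-P(i)$ pieces combine correctly with the $P(j)$ pieces so that they match the definitions in \eqref{notation-MLN}. In particular, the $j=i$ terms drop out, and the sums run over $j\neq i$.
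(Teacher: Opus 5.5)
Your proposal is correct and follows the same route as the paper. The paper also rewrites $J^{0}(t,x,i;\Theta X_{\Theta}^{0}+u)$ as a quadratic functional of $(X_{\Theta}^{0},u)$ with weights $Q(\cdot;\mathbf{\Theta})$, $S+R\Theta$, $R$ and terminal weight $P(T,\cdot)=G(T,\cdot)$, then applies It\^o's rule to $\langle P(s,\alpha(s))X_{\Theta}^{0}(s),X_{\Theta}^{0}(s)\rangle$ and uses the Lyapunov equation \eqref{lyapunov-1}; your bookkeeping of the jump terms and the identification of the cross coefficient as $\mathcal{L}^{\top}+\mathcal{N}\Theta$ supply exactly the computation the paper leaves to the reader.
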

\begin{proof}
Clearly, let $S(s,i;\mathbf{\Theta})=S(s,i)+R(s,i)\Theta(s,i)$ for all $i\in\mathcal{S}$. Then
%\begin{equation}\label{cost-Theta-0}
\begin{align*}
   &\quad  J^{0}(t,x,i;\Theta(\cdot,\alpha(\cdot))X_{\Theta}^{0}(\cdot)+u(\cdot))\\
    &= \mathbb{E}\Big\{\int_{t}^{T}
    \left<
    \left(
    \begin{matrix}
    Q(s,\alpha(s)) & S(s,\alpha(s))^{\top}  \\
    S(s,\alpha(s)) & R(s,\alpha(s))
    \end{matrix}
    \right)
    \left(
    \begin{matrix}
    X_{\Theta}^{0}(s) \\
    \Theta(s,\alpha(s))X_{\Theta}^{0}(s)+u(s)
    \end{matrix}
   \right),
    \left(
    \begin{matrix}
    X_{\Theta}^{0}(s) \\
    \Theta(s,\alpha(s))X_{\Theta}^{0}(s)+u(s)
    \end{matrix}
    \right)
    \right>
    ds\\
   &\quad +\big<G(T,\alpha(T))X_{\Theta}^{0}(T), X_{\Theta}^{0}(T)\big>\Big\}\\
   &=\mathbb{E}\Big\{\int_{t}^{T}
    \left<
    \left(
    \begin{matrix}
    Q(s,\alpha(s);\mathbf{\Theta}) & S(s,\alpha(s);\mathbf{\Theta})^{\top}  \\
    S(s,\alpha(s);\mathbf{\Theta}) & R(s,\alpha(s))
    \end{matrix}
    \right)
    \left(
    \begin{matrix}
    X_{\Theta}^{0}(s) \\
 u(s)
    \end{matrix}
   \right),
    \left(
    \begin{matrix}
    X_{\Theta}^{0}(s) \\
    u(s)
    \end{matrix}
    \right)
    \right>
    ds\\
    &\quad+\big<P(T,\alpha(T))X_{\Theta}^{0}(T), X_{\Theta}^{0}(T)\big>\Big\},
  \end{align*}
%\end{equation}
 Consequently, we can complete the proof by applying It\^o's rule to $\big<P(s,\alpha(s))X_{\Theta}^{0}(s),X_{\Theta}^{0}(s)\big>$ and substituting it into the above equation.
\end{proof}

 The following result provides an estimate for the solution of the Lyapunov equation \eqref{lyapunov-1}, which plays a crucial role in establishing the equivalence between uniform convexity of the cost functional and the strongly regular solution of the CDREs \eqref{CDREs}.

\begin{proposition}\label{lem-2}
  Let assumptions (A1)-(A2) and \eqref{uniformly-convex-condition} hold. Then for any $$\mathbf{\Theta}(\cdot)\in \mathcal{D}\left(L^{\infty}(0,T;\mathbb{R}^{m\times n})\right),$$ the solution $\mathbf{P}(\cdot)\in\mathcal{D}\left(C(0,T;\mathbb{S}^{n})\right)$  to the Lyapunov equation \eqref{lyapunov-1} satisfies
  \begin{equation}\label{lyapunov-condition}
    \mathcal{N}(s,i;\mathbf{P})\geq \mu I \quad \text{and} \quad P(s,i)\geq \gamma I,\quad s\in[0,T],\quad i\in\mathcal{S},
  \end{equation}
  where $\mu$ and $\gamma$ appear in \eqref{uniformly-convex-condition} and \eqref{value-0-property}, respectively.
\end{proposition}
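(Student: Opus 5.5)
The plan is to read off both bounds from the identity \eqref{J-Theta} of Lemma \ref{lem-1}, choosing the initial data and the perturbation $u(\cdot)$ appropriately. Throughout, $\mathbf{P}(\cdot)$ is the solution of the Lyapunov equation \eqref{lyapunov-1} associated with the given $\mathbf{\Theta}(\cdot)$, and $\mathcal{L},\mathcal{N}$ are the quantities \eqref{notation-MLN} evaluated at this $\mathbf{P}$.

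\emph{Lower bound on $P$.} Take $u(\cdot)=0$ in \eqref{J-Theta}. Then $J^{0}(t,x,i;\Theta(\cdot,\alpha(\cdot))X_{\Theta}^{0}(\cdot))=\langle P(t,i)x,x\rangle$. The feedback control $\Theta(\cdot,\alpha(\cdot))X_{\Theta}^{0}(\cdot)$ is predictable if we use $\alpha(s-)$ and $X^{0}_\Theta(s-)$; this does not change the $ds$-integrals. It lies in $\mathcal{U}[t,T]$ because $\mathbf{\Theta}$ is bounded and $X^0_\Theta\in\mathcal{S}^2_{\mathbb{F}}$. Hence this value is at least $V^{0}(t,x,i)$, which by Theorem \ref{thm-open-loop-solvability-uniform-convex} is at least $\gamma|x|^2$. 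Since $x$ is arbitrary, $P(t,i)\ge\gamma I$ for all $t,i$.

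\emph{Lower bound on $\mathcal{N}$.} Take $x=0$ and a general $u$. Set $v=\Theta X_\Theta^0+u$; then $X_\Theta^0(\cdot;t,0,i,u)=X^0(\cdot;t,0,i,v)$. By Lemma \ref{lem-0} we have $J^{0}(t,0,i;v)\ge\mu\mathbb{E}\int_t^T|v|^2ds$, so \eqref{J-Theta} gives
\begin{equation*}
\mathbb{E}\int_t^T\Big\{\langle \mathcal{N}u,u\rangle+2\langle(\mathcal{L}^\top+\mathcal{N}\Theta)X_\Theta^0,u\rangle\Big\}ds\ge\mu\,\mathbb{E}\int_t^T|\Theta X^0_\Theta+u|^2ds .
\end{equation*}
Now localize. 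Fix $s_0\in[t,T)$, a small $\varepsilon>0$, a vector $v_0\in\mathbb{R}^m$, and a state $k\in\mathcal{S}$. Choose $u(s)=v_0 I_{\{\alpha(s-)=k\}}I_{[s_0,s_0+\varepsilon]}(s)$ and start at time $s_0$ with $x=0$ and $\alpha(s_0)=k$; equivalently, apply the inequality with $t=s_0$ and $i=k$. Standard estimates give $\mathbb{E}\sup_{[s_0,s_0+\varepsilon]}|X^0_\Theta|^2\le K\varepsilon|v_0|^2$. Therefore the cross term and the $\Theta X$ terms are $O(\varepsilon^{3/2})$, while the leading terms are of order $\varepsilon$. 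Because $\alpha$ is right-continuous, $P(\cdot,\cdot)$ is continuous and the coefficients are bounded, $\mathbb{P}(\alpha(s)=k)\to1$ as $s\downarrow s_0$. Dividing by $\varepsilon$ and letting $\varepsilon\to0$ at Lebesgue points $s_0$ of the (bounded) coefficient functions yields $\langle\mathcal{N}(s_0,k;\mathbf{P})v_0,v_0\rangle\ge\mu|v_0|^2$ for a.e. $s_0$. Since $\mathcal{N}$ is defined only a.e. through $R,D,F_j,\pi_{ij}$, this is exactly the meaning of $\mathcal{N}(\cdot,k;\mathbf{P})\ge\mu I$.

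The main obstacle is this second localization step. Its delicate points are the Lebesgue-point argument for merely $L^\infty$ coefficients and making sure the $O(\varepsilon^{3/2})$ bounds hold uniformly. It also has to be checked that the jump terms coupling to $P(s,j)$ for $j\ne k$ are already absorbed inside $\mathcal{N}$ through It\^o's rule in Lemma \ref{lem-1}, so that no additional terms appear. If the pointwise limit is awkward, a fallback is to use general $u$ supported on $[s_0,s_0+\varepsilon]$ and bound the cross terms by Cauchy--Schwarz with the small factor $\sqrt{\varepsilon}$ coming from the estimate on $X^0_\Theta$.
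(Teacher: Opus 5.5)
Your proposal is correct and follows essentially the same route as the paper: the bound on $P$ comes from Lemma~\ref{lem-1} with $u=0$ together with Theorem~\ref{thm-open-loop-solvability-uniform-convex}, and the bound on $\mathcal{N}$ comes from Lemma~\ref{lem-1} combined with Lemma~\ref{lem-0}, a spike control on $[t,t+h]$, division by $h$, and $h\downarrow 0$. Your second-moment estimate $\mathbb{E}\sup_{[s_0,s_0+\varepsilon]}|X_\Theta^0|^2\le K\varepsilon|v_0|^2$ controls the cross term more soundly than the paper's Gronwall bound on $|\mathbb{E}[X_\Theta^0(s)]|$, which overlooks the $\alpha(s)$-dependence of the coefficients; note, however, that $\mu\,\mathbb{E}\int_{s_0}^{T}|\Theta X_\Theta^0|^2ds$ is only $O(\varepsilon)$, because the state keeps evolving after $s_0+\varepsilon$, so this term must be discarded by its sign, as in \eqref{induction-condition-1}, rather than by smallness.
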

\begin{proof}
  By \eqref{uniformly-convex-condition} and Lemma \ref{lem-0}, we have
 $$J^{0}(t,0,i;u(\cdot))
\geq \mu \mathbb{E}\int_{t}^{T} |u(s)|^{2}ds,\quad \forall (t,i,u(\cdot))\in[0,T]\times\mathcal{S}\times\mathcal{U}[t,T].$$
For any given $\mathbf{\Theta}(\cdot)\in \mathcal{D}\left(L^{\infty}(t,T;\mathbb{R}^{m\times n})\right)$, let $\mathbf{P}(\cdot)\in\mathcal{D}\left(C(t,T;\mathbb{S}^{n})\right)$
be the solutions to Lyapunov equation \eqref{lyapunov-1} and $X_{\Theta}^{0}(\cdot)$ be the solution to \eqref{state-Theta-0} with initial value $X_{\Theta}^{0}(t)=0$.
Then by Lemma \ref{lem-1}, we have
\begin{align*}
& \mu\mathbb{E}\int_{t}^{T}\big|\Theta(s,\alpha(s))X_{\Theta}^{0}(s)+u(s)\big|^{2}ds
\leq \mathbb{E}\int_{t}^{T}
 \Big\{\big<\mathcal{N}(s,\alpha(s);\mathbf{P})u(s),u(s)\big>\\
 &\qquad+2\big<\big[\mathcal{L}(s,\alpha(s);\mathbf{P})^{\top}
 +\mathcal{N}(s,\alpha(s);\mathbf{P})\Theta(s,\alpha(s))\big]X_{\Theta}^{0}(s),u(s)\big>\Big\},\quad \forall u(\cdot)\in\mathcal{U}[0,T],
\end{align*}
which implies that %for any $u(\cdot)\in\mathcal{U}[0,T]$,
\begin{equation}\label{induction-condition-1}
 \begin{aligned}
 &\mathbb{E}\int_{t}^{T}
 \Big\{2\big<\big[\mathcal{L}(s,\alpha(s);\mathbf{P})^{\top}
 +\big(\mathcal{N}(s,\alpha(s);\mathbf{P})-\mu I\big)\Theta(s,\alpha(s))\big]X_{\Theta}^{0}(s),u(s)\big>\\
 &\quad+\big<\big[\mathcal{N}(s,\alpha(s);\mathbf{P})-\mu I\big]u(s),u(s)\big>\Big\}ds
 \geq \mu\mathbb{E}\int_{t}^{T}\big|\Theta(s,\alpha(s))X_{\Theta}^{0}(s)\big|^{2}ds\geq 0, \quad \forall u(\cdot)\in\mathcal{U}[t,T].
 \end{aligned}
\end{equation}
%Let $\Phi_{\Theta}(\cdot;t,i)$ be the solution to the following SDE:
%\begin{equation}\label{state-Phi-0}
%   \left\{
%   \begin{array}{l}
%   d\Phi_{\Theta}(s)=\big[A(s,\alpha(s))+B(s,\alpha(s))\Theta(s,\alpha(s))\big]\Phi_{\Theta}(s)ds\\[2mm]
%   \qquad\qquad+\big[C(s,\alpha(s))+D(s,\alpha(s))\Theta(s,\alpha(s))\big]\Phi_{\Theta}(s)dW(s)\\[2mm]
%   \qquad\qquad+\sum_{j=1}^{L}\big[E_{j}(s,\alpha(s-))+F_{j}(s,\alpha(s-))\Theta(s,\alpha(s-))\big]\Phi_{\Theta}(s-)d\widetilde{N}_{j}(s),\\[2mm]
%   \Phi_{\Theta}(t)=I,\quad \alpha(t)=i, \quad s\in[t,T].
%   \end{array}
%   \right.
% \end{equation}
Now, for any $\upsilon\in\mathbb{R}^{m}$, let $u(s)=\upsilon I_{[t,t+h]}(s)$ with $0\leq t< t+h\leq T$. Then by the boundedness of $\mathbf{P}(\cdot)$ and $\mathbf{\Theta}(\cdot)$, the equation \eqref{induction-condition-1} yields
\begin{equation}\label{induction-condition-2}
 \begin{aligned}
 &\quad\int_{t}^{t+h}
 \Big\{K_{1}|\upsilon||\mathbb{E}[X_{\Theta}^{0}(s)]|+\mathbb{E}\int_{t}^{t+h}\big<\big[\mathcal{N}(s,\alpha(s);\mathbf{P})-\mu I\big]\upsilon,\upsilon\big>\Big\}ds\geq 0, \quad \text{for some } K_{1}>0.
 \end{aligned}
\end{equation}
Note that for $s\in [t,t+h]$,
\begin{align*}
\big|\mathbb{E}[X_{\Theta}^{0}(s)]\big|&\leq \mathbb{E}\int_{t}^{s}\Big\{\big|
\big[A(r,\alpha(r))+B(r,\alpha(r))\Theta(r,\alpha(r))\big]X_{\Theta}^{0}(r)+B(r,\alpha(r))\upsilon
\big|ds\Big\}\\
&\leq \int_{t}^{s} K_{2}\big[\big|\mathbb{E}X_{\Theta}^{0}(r) \big|+\big|\upsilon\big|\big]dr,\quad \text{for some } K_{2}>0.
\end{align*}
By Gronwall's inequality, we obtain
$$\big|\mathbb{E}[X_{\Theta}^{0}(s)]\big|\leq \int_{t}^{s}K_{2}\big|\upsilon\big| e^{K_{2}(s-r)}dr,\quad s\in[t,t+h].$$
Substituting the above result into \eqref{induction-condition-2}, we have
 \begin{equation}
 \begin{aligned}
 &\quad\int_{t}^{t+h}
 \Big\{K_{1}K_{2}\big|\upsilon\big|^{2} \int_{t}^{s} e^{K_{2}(s-r)}dr+\mathbb{E}\int_{t}^{t+h}\big<\big[\mathcal{N}(s,\alpha(s);\mathbf{P})-\mu I\big]\upsilon,\upsilon\big>\Big\}ds\geq 0.
 \end{aligned}
\end{equation}
Dividing both sides of above by $h$ and letting $h\downarrow 0$, we obtain
$$\big<\big[\mathcal{N}(t;P,i)-\mu I\big]\upsilon,\upsilon\big>\geq 0,\quad \forall \upsilon\in\mathbb{R}^{m},$$
which is equivalent to the first inequality in \eqref{lyapunov-condition}.

On the other hand, by Theorem \ref{thm-open-loop-solvability-uniform-convex} and Lemma \ref{lem-1}, one has
$$
\big<P(t,i)x,x\big>=J^{0}(t,x,i;\Theta(\cdot,\cdot)X(\cdot))\geq V^{0}(t,x,i)\geq \gamma |x|^{2},\quad \forall (t,x,i)\in[0,T]\times\mathbb{R}^{n}\times\mathcal{S}.
$$
The second inequality in \eqref{lyapunov-condition} follows directly from the above equation.
\end{proof}

Now, for $\mathcal{A}(\cdot,i),\text{ }\mathcal{C}(\cdot,i), \text{ }\mathcal{E}_{j}(\cdot,i)\in L^{\infty}(0,T;\mathbb{R}^{n\times n})$, $\mathcal{Q}(\cdot,i)\in L^{\infty}(0,T;\mathbb{S}^{n})$ and $\mathcal{G}(T,i)\in \mathbb{S}^{n}$, we consider the linear coupled ordinary differential equations (ODEs, for short):
\begin{equation}\label{M0-ODE}
  \left\{
  \begin{array}{l}
  -\dot{\Delta}(s,i)=\Delta(s,i)\mathcal{A}(s,i)+\mathcal{A}(s,i)^{\top}\Delta(s,i)
  +\mathcal{C}(s,i)^{\top}\Delta(s,i)\mathcal{C}(s,i)+\mathcal{Q}(s,i)\\[2mm]
  \qquad\qquad\quad +\sum_{j\neq i}^{L}\pi_{ij}(s)\big\{\big[\Delta(s,j)-\Delta(s,i)\big]\mathcal{E}_{j}(s,i)
  +\mathcal{E}_{j}(s,i)^{\top}\big[\Delta(s,j)-\Delta(s,i)\big]\\[2mm]
  \qquad\qquad\quad+\mathcal{E}_{j}(s,i)^{\top}\Delta(s,j)\mathcal{E}_{j}(s,i)+\Delta(s,j)-\Delta(s,i)\big\}\\[2mm]
  \Delta(T,i)=\mathcal{G}(T,i),\quad i\in\mathcal{S}.
  \end{array}
  \right.
\end{equation}
The following result provides a comparison principle for ODEs \eqref{M0-ODE}.
\begin{proposition}\label{prop-M0}
If
 $$\mathcal{G}(T,i)\geq 0,\quad \mathcal{Q}(\cdot,i)\geq 0,\quad \forall i\in\mathcal{S},$$
Then, the unique solution $\mathbf{\Delta}(\cdot)\in \mathcal{D}\left(C(0,T;\mathbb{S}^{n})\right)$  to \eqref{M0-ODE} satisfies $\Delta(\cdot,i)\geq 0$ for all $i\in\mathcal{S}$.
\end{proposition}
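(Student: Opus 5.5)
The plan is to represent $\Delta(t,i)$ probabilistically, as the quadratic form of a linear regime-switching jump-diffusion, in the same way Lemma \ref{lem-1} represents the Lyapunov equation \eqref{lyapunov-1}. Equation \eqref{M0-ODE} is exactly \eqref{lyapunov-1} with $A(\cdot,i;\mathbf{\Theta})$, $C(\cdot,i;\mathbf{\Theta})$, $E_{j}(\cdot,i;\mathbf{\Theta})$, $Q(\cdot,i;\mathbf{\Theta})$, $G(T,i)$ replaced by $\mathcal{A}(\cdot,i)$, $\mathcal{C}(\cdot,i)$, $\mathcal{E}_{j}(\cdot,i)$, $\mathcal{Q}(\cdot,i)$, $\mathcal{G}(T,i)$.

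First I would settle existence and uniqueness of $\mathbf{\Delta}(\cdot)$. The system \eqref{M0-ODE} is linear in the $L$-tuple $(\Delta(\cdot,1),\dots,\Delta(\cdot,L))$. Stacked as a vector in $(\mathbb{S}^{n})^{L}$, it is a linear ODE whose coefficients are $L^{\infty}$ in $s$, because $\pi_{ij}$ is bounded. Carathéodory theory therefore gives a unique absolutely continuous solution on $[0,T]$.

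Next, for fixed $(t,x,i)$, I would let $\Phi(\cdot)$ solve the homogeneous SDE
\[
d\Phi(s)=\mathcal{A}(s,\alpha(s))\Phi(s)ds+\mathcal{C}(s,\alpha(s))\Phi(s)dW(s)+\sum_{j=1}^{L}\mathcal{E}_{j}(s,\alpha(s-))\Phi(s-)d\widetilde{N}_{j}(s),
\]
with $\Phi(t)=x$ and $\alpha(t)=i$. This solution exists in $\mathcal{S}_{\mathbb{F}}^{2}$ by the same result of Situ invoked for \eqref{state}.

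I would then apply Itô's rule to $\langle\Delta(s,\alpha(s))\Phi(s),\Phi(s)\rangle$. Each term of the computation must match a term in \eqref{M0-ODE}:
\begin{itemize}
\item The drift of $\Phi$ gives $\Delta\mathcal{A}+\mathcal{A}^{\top}\Delta$, and the Brownian part gives $\mathcal{C}^{\top}\Delta\mathcal{C}$.
\item At a jump of $\alpha$ from $i$ to $j$, the state moves from $\Phi$ to $(I+\mathcal{E}_{j})\Phi$ and the weight moves from $\Delta(s,i)$ to $\Delta(s,j)$. The jump of the quadratic form is therefore $\langle[(I+\mathcal{E}_{j})^{\top}\Delta(s,j)(I+\mathcal{E}_{j})-\Delta(s,i)]\Phi,\Phi\rangle$.
\item The compensated-jump drift, $d\widetilde{N}_{j}=dN_{j}-\lambda_{j}ds$, contributes $-\pi_{ij}\,\mathcal{E}_{j}^{\top}\Delta(s,i)-\pi_{ij}\,\Delta(s,i)\mathcal{E}_{j}$.
\end{itemize}
Adding these, the jump compensator is $\pi_{ij}$ times
\[
\Delta(j)-\Delta(i)+[\Delta(j)-\Delta(i)]\mathcal{E}_{j}+\mathcal{E}_{j}^{\top}[\Delta(j)-\Delta(i)]+\mathcal{E}_{j}^{\top}\Delta(j)\mathcal{E}_{j},
\]
which is exactly the coupling term in \eqref{M0-ODE}. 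This bookkeeping is the main obstacle: the coupling must match precisely, or the argument fails. It is the same computation already used implicitly in Lemma \ref{lem-1} and Theorem \ref{thm-closed-representation}, so I would mirror it.

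With the matching in hand, the drift of $\langle\Delta(s,\alpha(s))\Phi(s),\Phi(s)\rangle$ collapses to $-\langle\mathcal{Q}(s,\alpha(s))\Phi(s),\Phi(s)\rangle$. The local martingale parts are true martingales because $\Phi\in\mathcal{S}_{\mathbb{F}}^{2}$ and all coefficients are bounded. Taking expectations then gives
\[
\langle\Delta(t,i)x,x\rangle=\mathbb{E}\Big[\langle\mathcal{G}(T,\alpha(T))\Phi(T),\Phi(T)\rangle+\int_{t}^{T}\langle\mathcal{Q}(s,\alpha(s))\Phi(s),\Phi(s)\rangle ds\Big].
\]
Since $\mathcal{G}(T,\cdot)\geq0$ and $\mathcal{Q}(\cdot,\cdot)\geq0$, the right-hand side is nonnegative. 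Because $x\in\mathbb{R}^{n}$, $t\in[0,T]$ and $i\in\mathcal{S}$ are arbitrary, $\Delta(t,i)\geq0$ for all $t$ and $i$. The case where $\mathcal{Q}\geq0$ holds only a.e. needs no change, since $\mathcal{Q}$ enters only under the time integral.
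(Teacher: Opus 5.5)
Your proposal is correct and follows the paper's proof. The paper likewise applies It\^o's formula to $\langle\Delta(s,\alpha(s))\Phi(s),\Phi(s)\rangle$ along the homogeneous regime-switching jump SDE driven by $\mathcal{A},\mathcal{C},\mathcal{E}_{j}$, and uses \eqref{M0-ODE} to collapse the drift to $-\mathcal{Q}$. This yields $\Delta(t,i)=\mathbb{E}\big[\Phi(T)^{\top}\mathcal{G}(T,\alpha(T))\Phi(T)+\int_{t}^{T}\Phi(s)^{\top}\mathcal{Q}(s,\alpha(s))\Phi(s)\,ds\,\big|\,\alpha(t)=i\big]$.

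The only difference is cosmetic: the paper takes the matrix-valued solution with $\Phi(t)=I_{n}$ instead of your $\Phi(t)=x$, which is the same by linearity. Your jump-compensator bookkeeping and your remarks on well-posedness and the true-martingale property are correct and slightly more careful than the paper's.
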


%{\color{red}
\begin{proof}
Let $\Phi(\cdot)\equiv\Phi(\cdot;t,i)$ denote the solution to the following stochastic differential equation:
\begin{equation*}\label{Phi-M0}
 \left\{
 \begin{array}{l}
 d\Phi(s)=\mathcal{A}(s,\alpha(s))\Phi(s)\,ds+\mathcal{C}(s,\alpha(s))\Phi(s)\,dW(s)
 +\sum_{j=1}^{L}\mathcal{E}_{j}(s,\alpha(s-))\Phi(s-)\,d\widetilde{N}_{j}(s),\quad s\in[t,T],\\[2mm]
   \Phi(t)=I_{n},\quad \alpha(t)=i.
 \end{array}
 \right.
\end{equation*}
Applying It\^o's formula to $\big\langle\Delta(s,\alpha(s))\Phi(s),\Phi(s)\big\rangle$ and noting that
 $$
  \begin{array}{l}
  d\Delta(s,\alpha(s))=\Big\{\dot{\Delta}(s,\alpha(s))+\sum_{j=1}^{L}\lambda_{j}(s)
  \big[\Delta(s,j)-\Delta(s,\alpha(s))\big]\Big\}ds\\[2mm]
  \qquad\qquad\qquad+\sum_{j=1}^{L}\big[\Delta(s,j)-\Delta(s,\alpha(s-))\big]
  d\widetilde{N}_{j}(s),
  \end{array}
  $$
we obtain, for any $(t,i)\in[0,T]\times\mathcal{S}$,
\begin{equation}\label{eq-Delta}
\begin{array}{l}
\quad \mathbb{E}\left[\big\langle\mathcal{G}(T,\alpha(T))\Phi(T),\Phi(T)\big\rangle-\Delta(t,i)\right] \\[2mm]
= \mathbb{E}\int_{t}^{T}\Big\{\big\langle\big[\dot{\Delta}(s,\alpha(s))+\sum_{j=1}^{L}\lambda_{j}(s)
\big(\Delta(s,j)-\Delta(s,\alpha(s))\big)\big]\Phi(s)+\Delta(s,\alpha(s))\mathcal{A}(s,\alpha(s))\Phi(s) \\[2mm]
\quad +\sum_{j=1}^{L}\lambda_{j}(s)\big[\Delta(s,j)-\Delta(s,\alpha(s))\big]\mathcal{E}_{j}(s,\alpha(s))\Phi(s),\Phi(s)\big\rangle \\[2mm]
\quad +\big\langle\Delta(s,\alpha(s))\Phi(s),\mathcal{A}(s,\alpha(s))\Phi(s)\big\rangle
+\big\langle\Delta(s,\alpha(s))\mathcal{C}(s,\alpha(s))\Phi(s),\mathcal{C}(s,\alpha(s))\Phi(s)\big\rangle \\[2mm]
\quad +\sum_{j=1}^{L}\lambda_{j}(s)\big\langle\big[\Delta(s,j)-\Delta(s,\alpha(s))\big]\Phi(s)
+\Delta(s,j)\mathcal{E}_{j}(s,\alpha(s))\Phi(s),\mathcal{E}_{j}(s,\alpha(s))\Phi(s)\big\rangle\Big\}ds \\[2mm]
= \mathbb{E}\int_{t}^{T}\Big\langle\Big\{\dot{\Delta}(s,\alpha(s))+\Delta(s,\alpha(s))\mathcal{A}(s,\alpha(s))
+\mathcal{A}(s,\alpha(s))^{\top}\Delta(s,\alpha(s))
+\mathcal{C}(s,\alpha(s))^{\top}\Delta(s,\alpha(s))\mathcal{C}(s,\alpha(s)) \\[2mm]
\quad+\sum_{j=1}^{L}\lambda_{j}(s)\Big[\big(\Delta(s,j)-\Delta(s,\alpha(s))\big)\mathcal{E}_{j}(s,\alpha(s))
+\mathcal{E}_{j}(s,\alpha(s))^{\top}\big(\Delta(s,j)-\Delta(s,\alpha(s))\big) \\[2mm]
\quad+\mathcal{E}_{j}(s,\alpha(s))^{\top}\Delta(s,j)\mathcal{E}_{j}(s,\alpha(s))+\Delta(s,j)-\Delta(s,\alpha(s))\Big]\Big\}\Phi(s),\Phi(s)
\Big\rangle ds.
\end{array}
\end{equation}
Observe that equation \eqref{M0-ODE} implies
$$
\begin{array}{l}
\dot{\Delta}(s,\alpha(s))+\Delta(s,\alpha(s))\mathcal{A}(s,\alpha(s))
+\mathcal{A}(s,\alpha(s))^{\top}\Delta(s,\alpha(s))
+\mathcal{C}(s,\alpha(s))^{\top}\Delta(s,\alpha(s))\mathcal{C}(s,\alpha(s)) \\[2mm]
+\sum_{j=1}^{L}\lambda_{j}(s)\Big[\big(\Delta(s,j)-\Delta(s,\alpha(s))\big)\mathcal{E}_{j}(s,\alpha(s))
+\mathcal{E}_{j}(s,\alpha(s))^{\top}\big(\Delta(s,j)-\Delta(s,\alpha(s))\big) \\[2mm]
+\mathcal{E}_{j}(s,\alpha(s))^{\top}\Delta(s,j)\mathcal{E}_{j}(s,\alpha(s))+\Delta(s,j)-\Delta(s,\alpha(s))\Big] = -\mathcal{Q}(s,\alpha(s)).
\end{array}
$$
Substituting this into equation \eqref{eq-Delta} yields
$$
\begin{array}{l}
\quad \mathbb{E}\left[\big\langle\mathcal{G}(T,\alpha(T))\Phi(T),\Phi(T)\big\rangle-\Delta(t,i)\right]
= -\mathbb{E}\int_{t}^{T}\big\langle\mathcal{Q}(s,\alpha(s))\Phi(s),\Phi(s)\big\rangle ds,
\end{array}
$$
which further implies that
\begin{equation*}
\Delta(t,i)=\mathbb{E}\Big[\Phi(T;t,i)^{\top}\mathcal{G}(T,\alpha(T))\Phi(T;t,i)
+\int_{t}^{T}\Phi(s;t,i)^{\top}\mathcal{Q}(s,\alpha(s))\Phi(s;t,i)\,ds\;\big|\;\alpha(t)=i \Big].
\end{equation*}
The desired result then follows directly from the above identity.
\end{proof}
%}

Now we prove the equivalence between the uniform convexity of the cost functional and the strongly regular solvability of the Riccati equation.

\begin{theorem}\label{thm-CDRE-uniform-convex}
 Let assumptions (A1)-(A2) hold. Then the condition \eqref{uniformly-convex-condition} holds if and only if the CDREs \eqref{CDREs} admits a solution $\mathbf{P}(\cdot)\in\mathcal{D}\left(C(0,T;\mathbb{S}^{n})\right)$  such that $\mathcal{N}(\cdot,i;\mathbf{P})\gg 0$ for any $i\in\mathcal{S}$.
\end{theorem}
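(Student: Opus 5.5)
The proof splits into the two implications; sufficiency is short and necessity carries the work.

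\emph{Sufficiency.} Suppose \eqref{CDREs} has a solution $\mathbf{P}(\cdot)$ with $\mathcal{N}(\cdot,i;\mathbf{P})\geq\delta I$ for all $i$. Set $\Theta^{*}(s,i)=-\mathcal{N}(s,i;\mathbf{P})^{-1}\mathcal{L}(s,i;\mathbf{P})^{\top}$, which is bounded. A direct substitution shows that \eqref{CDREs} is exactly the Lyapunov equation \eqref{lyapunov-1} with $\mathbf{\Theta}=\mathbf{\Theta}^{*}$, because $\mathcal{L}^{\top}+\mathcal{N}\Theta^{*}=0$. Any $u\in\mathcal{U}[0,T]$ can be written as $u=\Theta^{*}X+v$, where $X$ is the state driven by $u$ and $v=u-\Theta^{*}X$. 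Lemma~\ref{lem-1} with $x=0$ then gives
\[
J^{0}(0,0,i;u)=\mathbb{E}\int_{0}^{T}\big<\mathcal{N}v,v\big>ds\geq\delta\,\mathbb{E}\int_{0}^{T}|u-\Theta^{*}X^{u}|^{2}ds .
\]
Lemma~\ref{lem-3} bounds the right-hand side below by $\delta\delta'\mathbb{E}\int_{0}^{T}|u|^{2}ds$, which is \eqref{uniformly-convex-condition}.

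\emph{Necessity.} Assume \eqref{uniformly-convex-condition} and build $\mathbf{P}$ by policy iteration on Lyapunov equations, as in Sun--Li--Yong.
\begin{itemize}
\item Start from $\mathbf{\Theta}_{0}=0$ and let $\mathbf{P}_{0}$ solve \eqref{lyapunov-1}. This linear coupled ODE system is uniquely solvable with continuous solution.
\item Given $\mathbf{P}_{k}$, set $\Theta_{k+1}(s,i)=-\mathcal{N}(s,i;\mathbf{P}_{k})^{-1}\mathcal{L}(s,i;\mathbf{P}_{k})^{\top}$ and let $\mathbf{P}_{k+1}$ solve \eqref{lyapunov-1} with $\mathbf{\Theta}_{k+1}$.
\item Proposition~\ref{lem-2} gives $\mathcal{N}(\cdot,i;\mathbf{P}_{k})\geq\mu I$ and $P_{k}\geq\gamma I$ for every $k$, so $\Theta_{k+1}$ is well defined. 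Its boundedness needs an upper bound on $\mathbf{P}_{k}$, which comes from the monotonicity below.
\item Subtract the Lyapunov equations for $\mathbf{P}_{k}$ and $\mathbf{P}_{k+1}$ and complete the square. The difference $\mathbf{\Delta}_{k}=\mathbf{P}_{k}-\mathbf{P}_{k+1}$ then solves an equation of the form \eqref{M0-ODE}, with coefficients $\mathcal{A}=A(\cdot;\mathbf{\Theta}_{k+1})$ (and similarly $\mathcal{C}$, $\mathcal{E}_{j}$), $\mathcal{G}=0$, and
\[
\mathcal{Q}=(\Theta_{k}-\Theta_{k+1})^{\top}\mathcal{N}(\cdot;\mathbf{P}_{k})(\Theta_{k}-\Theta_{k+1})\geq 0 .
\]
The jump cross terms have the same structure, since $\mathcal{N}$ already contains $\sum_{j}\pi_{ij}F_{j}^{\top}P(j)F_{j}$. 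Proposition~\ref{prop-M0} then gives $\mathbf{\Delta}_{k}\geq 0$.
\end{itemize}
Hence $\mathbf{P}_{k}$ is decreasing and bounded below by $\gamma I$, so it converges pointwise to some $\mathbf{P}$. The bounds $\gamma I\leq P_{k}\leq P_{0}$ make the $\Theta_{k}$ uniformly bounded.

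To upgrade pointwise to uniform convergence, I would write the iteration in integral form and estimate $\sup|P_{k+1}-P_{k}|$ and $|\Theta_{k+1}-\Theta_{k}|$ on $[t,T]$. The aim is a contraction-type Gronwall estimate, for instance $|\Delta_{k}(s)|\leq K\int_{s}^{T}|\Delta_{k-1}|$, which yields $\sum_{k}\|\Delta_{k}\|_{\infty}<\infty$. Dominated convergence in the integral equation then shows that $\mathbf{P}$ is continuous and solves \eqref{CDREs}, with $\mathcal{N}(\cdot,i;\mathbf{P})\geq\mu I$ inherited in the limit. Uniqueness follows from Corollary~\ref{coro-closed-representation-0}, since $\big<P(t,i)x,x\big>=V^{0}(t,x,i)$ is determined by the problem.

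The main obstacle is this last step: the uniform bounds on $\Theta_{k}$, and the convergence of $\mathbf{\Theta}_{k}$ that lets one pass to the limit. This needs the uniform lower bound $\mathcal{N}\geq\mu I$ from Proposition~\ref{lem-2}. It also needs control of the coupled jump terms $P(s,j)-P(s,i)$, which mix components across regimes, so every estimate has to be done on the sum over $i\in\mathcal{S}$.
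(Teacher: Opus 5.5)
Your proposal is correct and follows the paper's proof essentially step for step. Sufficiency uses the completed-square identity with $\Theta^{*}=-\mathcal{N}^{-1}\mathcal{L}^{\top}$ plus Lemma~\ref{lem-3}. Necessity uses the same Lyapunov policy iteration, with Proposition~\ref{lem-2} supplying $\mathcal{N}(\cdot,i;\mathbf{P}_k)\geq\mu I$ and $P_k\geq\gamma I$, and Proposition~\ref{prop-M0} applied to $\mathbf{\Delta}_k$ with source $(\Theta_k-\Theta_{k+1})^{\top}\mathcal{N}(\cdot,i;\mathbf{P}_k)(\Theta_k-\Theta_{k+1})\geq 0$; the paper then does exactly the Gronwall estimate you sketch, bounding $|\Theta_k-\Theta_{k+1}|$ linearly by $\|\mathbf{\Delta}_{k-1}\|$ and iterating to a factorial bound. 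The step you flag as the main obstacle can in fact be skipped: monotone pointwise convergence, the uniform bounds, and continuity of $\mathbf{P}\mapsto-\mathcal{N}(s,i;\mathbf{P})^{-1}\mathcal{L}(s,i;\mathbf{P})^{\top}$ on $\{\mathcal{N}\geq\mu I\}$ already give $\Theta_k\to\Theta$ pointwise, and dominated convergence in the integral equation then yields a continuous limit solving \eqref{CDREs}.
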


\begin{proof}
\textbf{Necessity.} Let $\mathbf{P^{(0)}}(\cdot)\in\mathcal{D}\left(C(0,T;\mathbb{S}^{n})\right)$ be the solution of
\begin{equation}\label{P0}
\left\{
\begin{aligned}
-\dot{P}^{(0)}(s,i)&=\mathcal{M}(s,i;\mathbf{P^{(0)}}),\quad s\in[0,T],\\
P^{(0)}(T,i)&=G(T,i).
\end{aligned}
\right.
\end{equation}
Then by Proposition \ref{lem-2} with $\mathbf{\Theta}(\cdot)\equiv \mathbf{0}$, we have
$$\mathcal{N}(s,i;\mathbf{P^{(0)}})\geq \mu I \quad \text{and} \quad P^{(0)}(s,i)\geq \gamma I,\quad s\in[0,T],\quad i\in\mathcal{S}.$$
Now, for $k=0,1,2,\cdots$ and $s\in [0,T]$, we defined
 $$\left\{
 \begin{array}{l}
 \Theta^{(k)}(s,i)=-\mathcal{N}(s,i;\mathbf{P^{(k)}})^{-1}\mathcal{L}(s,i;\mathbf{P^{(k)}})^{\top},\quad
 A^{(k)}(s,i)=A(s,i)+B(s,i)\Theta^{(k)}(s,i),\\[2mm]
 C^{(k)}(s,i)=C(s,i)+D(s,i)\Theta^{(k)}(s,i),\quad
E_{j}^{(k)}(s,i)=E_{j}(s,i)+F_{j}(s,i)\Theta^{(k)}(s,i),\\[2mm]
Q^{(k)}(s,i)=Q(s,i)+S(s,i)^{\top}\Theta^{(k)}(s,i)+\Theta^{(k)}(s,i)^{\top}S(s,i)
+\Theta^{(k)}(s,i)^{\top}R(s,i)\Theta^{(k)}(s,i),
 \end{array}
 \right.
 $$
and let $\mathbf{P^{(k+1)}}(\cdot)$ be the solution of
\begin{equation}\label{PK}
  \left\{
  \begin{array}{l}
-\dot{P}^{(k+1)}(s,i)=P^{(k+1)}(s,i)A^{(k)}(s,i)+A^{(k)}(s,i)^{\top}P^{(k+1)}(s,i)
+C^{(k)}(s,i)^{\top}P^{(k+1)}(s,i)C^{(k)}(s,i)\\[2mm]
\quad +\sum_{j\neq i}^{L}\pi_{ij}(s)\big\{\big[P^{(k+1)}(s,j)-P^{(k+1)}(s,i)\big]E_{j}^{(k)}(s,i)
+E_{j}^{(k)}(s,i)^{\top}\big[P^{(k+1)}(s,j)-P^{(k+1)}(s,i)\big]\\[2mm]
\quad +E_{j}^{(k)}(s,i)^{\top}P^{(k+1)}(s,j)E_{j}^{(k)}(s,i)+P^{(k+1)}(s,j)-P^{(k+1)}(s,i)\big\}+Q^{(k)}(s,i),\\[2mm]
P^{(k+1)}(T,i)=G(T,i),\quad k=0,1,2,\cdots .
\end{array}
  \right.
\end{equation}
Then by Proposition \ref{lem-2} again, we have
$$\mathcal{N}(s,i;\mathbf{P^{(k+1)}})\geq \mu I \quad \text{and} \quad P^{(k+1)}(s,i)\geq \gamma I,\quad s\in[0,T],\quad i\in\mathcal{S},\quad k=0,1,2,\cdots.$$

Next, we will prove that for any $i\in\mathcal{S}$, $\{P^{(k)}(\cdot,i)\}_{k=1}^{\infty}$ uniformly converges in $C(0,T;\mathbb{S}^{n})$. To this end, for $ k=1,2,\cdots,$ we let
$$\Delta^{(k)}(s,i)\triangleq P^{(k)}(s,i)-P^{(k+1)}(s,i),\quad \Lambda^{(k)}(s,i)\triangleq \Theta^{(k-1)}(s,i)-\Theta^{(k)}(s,i),\quad s\in[0,T], \quad i\in\mathcal{S}.$$
Then we have
\begin{align*}
&-\dot{\Delta}^{(k)}(s,i)=P^{(k)}(s,i)A^{(k-1)}(s,i)+A^{(k-1)}(s,i)^{\top}P^{(k)}(s,i)+C^{(k-1)}(s,i)^{\top}P^{(k)}(s,i)C^{(k-1)}(s,i)\\
&\quad+\sum_{j\neq i}^{L}\pi_{ij}(s)\big\{\big[P^{(k)}(s,j)-P^{(k)}(s,i)\big]E_{j}^{(k-1)}(s,i)+E_{j}^{(k-1)}(s,i)^{\top}\big[P^{(k)}(s,j)-P^{(k)}(s,i)\big]\\
&\quad +E_{j}^{(k-1)}(s,i)^{\top}P^{(k)}(s,j)E_{j}^{(k-1)}(s,i)+P^{(k)}(s,j)-P^{(k)}(s,i)\big\} +Q^{(k-1)}(s,i)\\
&\quad-P^{(k+1)}(s,i)A^{(k)}(s,i)-A^{(k)}(s,i)^{\top}P^{(k+1)}(s,i)-C^{(k)}(s,i)^{\top}P^{(k+1)}(s,i)C^{(k)}(s,i)-Q^{(k)}(s,i)\\
&\quad-\sum_{j\neq i}^{L}\pi_{ij}(s)\big\{\big[P^{(k+1)}(s,j)-P^{(k+1)}(s,i)\big]E_{j}^{(k)}(s,i)+E_{j}^{(k)}(s,i)^{\top}\big[P^{(k+1)}(s,j)-P^{(k+1)}(s,i)\big]\\
&\quad +E_{j}^{(k)}(s,i)^{\top}P^{(k+1)}(s,j)E_{j}^{(k)}(s,i)+P^{(k+1)}(s,j)-P^{(k+1)}(s,i)\big\}\\
&=\Delta^{(k)}(s,i)A^{(k)}(s,i)+A^{(k)}(s,i)^{\top}\Delta^{(k)}(s,i)+C^{(k)}(s,i)^{\top}\Delta^{(k)}(s,i)C^{(k)}(s,i)\\
&\quad+P^{(k)}(s,i)B(s,i)\Lambda^{(k)}(s,i)+\Lambda^{(k)}(s,i)^{\top}B(s,i)^{\top}P^{(k)}(s,i)+C^{(k-1)}(s,i)^{\top}P^{(k)}(s,i)C^{(k-1)}(s,i)\\
&\quad-C^{(k)}(s,i)^{\top}P^{(k)}(s,i)C^{(k)}(s,i) +S(s,i)^{\top}\Lambda^{(k)}(s,i)+\Lambda^{(k)}(s,i)^{\top}S(s,i)\\
&\quad+\Theta^{(k-1)}(s,i)^{\top}R(s,i)\Theta^{(k-1)}(s,i)-\Theta^{(k)}(s,i)^{\top}R(s,i)\Theta^{(k)}(s,i)\\
&\quad+\sum_{j\neq i}^{L}\pi_{ij}(s)\big\{\big[\Delta^{(k)}(s,j)-\Delta^{(k)}(s,i)\big]E_{j}^{(k)}(s,i)+E_{j}^{(k)}(s,i)^{\top}\big[\Delta^{(k)}(s,j)-\Delta^{(k)}(s,i)\big]\\
&\quad+\big[P^{(k)}(s,j)-P^{(k)}(s,i)\big]F_{j}(s,i)\Lambda^{(k)}(s,i)+\Lambda^{(k)}(s,i)^{\top}F_{j}(s,i)^{\top}\big[P^{(k)}(s,j)-P^{(k)}(s,i)\big]\\
&\quad +E_{j}^{(k-1)}(s,i)^{\top}P^{(k)}(s,j)E_{j}^{(k-1)}(s,i)-E_{j}^{(k)}(s,i)^{\top}P^{(k+1)}(s,j)E_{j}^{(k)}(s,i)+\Delta^{(k)}(s,j)-\Delta^{(k)}(s,i)\big\}.
\end{align*}

Noting that
\begin{align*}
& C^{(k-1)}(s,i)^{\top}P^{(k)}(s,i)C^{(k-1)}(s,i)-C^{(k)}(s,i)^{\top}P^{(k)}(s,i)C^{(k)}(s,i)\\
&\quad=\Lambda^{(k)}(s,i)^{\top}D(s,i)^{\top}P^{(k)}(s,i)D(s,i)\Lambda^{(k)}(s,i)
+\Lambda^{(k)}(s,i)^{\top}D(s,i)^{\top}P^{(k)}(s,i)C^{(k)}(s,i)\\
&\quad\quad+C^{(k)}(s,i)^{\top}P^{(k)}(s,i)D(s,i)\Lambda^{(k)}(s,i),\\
&\Theta^{(k-1)}(s,i)^{\top}R(s,i)\Theta^{(k-1)}(s,i)-\Theta^{(k)}(s,i)^{\top}R(s,i)\Theta^{(k)}(s,i)\\
&\quad=\Lambda^{(k)}(s,i)^{\top}R(s,i)\Lambda^{(k)}(s,i)+\Lambda^{(k)}(s,i)^{\top}R(s,i)\Theta^{(k)}(s,i)
+\Theta^{(k)}(s,i)^{\top}R(s,i)\Lambda^{(k)}(s,i),\\
&E_{j}^{(k-1)}(s,i)^{\top}P^{(k)}(s,j)E_{j}^{(k-1)}(s,i)-E_{j}^{(k)}(s,i)^{\top}P^{(k+1)}(s,j)E_{j}^{(k)}(s,i)\\
&\quad=\Lambda^{(k)}(s,i)^{\top}F_{j}(s,i)^{\top}P^{(k)}(s,i)F_{j}(s,i)\Lambda^{(k)}(s,i)
+\Lambda^{(k)}(s,i)^{\top}F_{j}(s,i)^{\top}P^{(k)}(s,i)E_{j}^{(k)}(s,i)\\
&\quad\quad+E_{j}^{(k)}(s,i)^{\top}P^{(k)}(s,i)F_{j}(s,i)\Lambda^{(k)}(s,i).
\end{align*}
further implies
\begin{equation}\label{Delta-2}
\begin{array}{l}
-\dot{\Delta}^{(k)}(s,i)=\Delta^{(k)}(s,i)A^{(k)}(s,i)+A^{(k)}(s,i)^{\top}\Delta^{(k)}(s,i)
+C^{(k)}(s,i)^{\top}\Delta^{(k)}(s,i)C^{(k)}(s,i)\\[2mm]
\quad+\sum_{j\neq i}^{L}\pi_{ij}(s)\big\{\big[\Delta^{(k)}(s,j)-\Delta^{(k)}(s,i)\big]E_{j}^{(k)}(s,i)
+E_{j}^{(k)}(s,i)^{\top}\big[\Delta^{(k)}(s,j)-\Delta^{(k)}(s,i)\big]\\[2mm]
\quad +\Delta^{(k)}(s,j)-\Delta^{(k)}(s,i)\big\}
+\Lambda^{(k)}(s,i)^{\top}\big[\mathcal{L}(s,i;\mathbf{P^{(k)}})^{\top}+\mathcal{N}(s,i;\mathbf{P^{(k)}})\Theta^{(k)}(s,i)\big]\\[2mm]
\quad +\big[\mathcal{L}(s,i;\mathbf{P^{(k)}})^{\top}+\mathcal{N}(s,i;\mathbf{P^{(k)}})\Theta^{(k)}(s,i)\big]^{\top}\Lambda^{(k)}(s,i)
+\Lambda^{(k)}(s,i)^{\top}\mathcal{N}(s,i;\mathbf{P^{(k)}})\Lambda^{(k)}(s,i)\\[2mm]
=\Delta^{(k)}(s,i)A^{(k)}(s,i)+A^{(k)}(s,i)^{\top}\Delta^{(k)}(s,i)
+C^{(k)}(s,i)^{\top}\Delta^{(k)}(s,i)C^{(k)}(s,i)\\[2mm]
\quad+\sum_{j\neq i}^{L}\pi_{ij}(s)\big\{\big[\Delta^{(k)}(s,j)-\Delta^{(k)}(s,i)\big]E_{j}^{(k)}(s,i)
+E_{j}^{(k)}(s,i)^{\top}\big[\Delta^{(k)}(s,j)-\Delta^{(k)}(s,i)\big]\\[2mm]
\quad +\Delta^{(k)}(s,j)-\Delta^{(k)}(s,i)\big\}
+\Lambda^{(k)}(s,i)^{\top}\mathcal{N}(s,i;\mathbf{P^{(k)}})\Lambda^{(k)}(s,i).
\end{array}
\end{equation}
Note that
$$\Delta(T,i)=0,\quad \text{and} \quad  \mathcal{N}(\cdot,i;\mathbf{P^{(k)}})\geq \mu I,\quad \forall i\in\mathcal{S}.$$
By Proposition \ref{prop-M0}, we have
$$\Delta(s,i)\geq 0,\quad s\in[0,T],\quad i\in\mathcal{S},$$
which implies
$$
P^{(1)}(s,i)\geq P^{(k)}(s,i)\geq P^{(k+1)}(s,i)\geq\gamma I,\quad s\in[0,T],\quad i\in\mathcal{S}.
$$
Consequently, $\{P^{(k)}(\cdot,i)\}_{k=1}^{\infty}$ is uniformly bounded for any $i\in\mathcal{S}$.

On the other hand, observe that
\begin{align*}
\Lambda^{(k)}(s,i)&=\Theta^{(k-1)}(s,i)-\Theta^{(k)}(s,i)\\
&=\mathcal{N}(s,i;\mathbf{P^{(k)}})^{-1}\mathcal{L}(s,i;\mathbf{P^{(k)}})^{\top}
-\mathcal{N}(s,i;\mathbf{P^{(k-1)}})^{-1}\mathcal{L}(s;P^{(k-1)},i)^{\top}\\
&=\big[\mathcal{N}(s,i;\mathbf{P^{(k)}})^{-1}-\mathcal{N}(s,i;\mathbf{P^{(k-1)}})^{-1}\big]\mathcal{L}(s,i;\mathbf{P^{(k)}})^{\top}\\
&\quad-\mathcal{N}(s,i;\mathbf{P^{(k-1)}})^{-1}\big[\mathcal{L}(s,i;\mathbf{P^{(k-1)}})^{\top}-\mathcal{L}(s,i;\mathbf{P^{(k)}})^{\top}\big]\\
&=\mathcal{N}(s,i;\mathbf{P^{(k)}})^{-1}\big[\mathcal{N}(s,i;\mathbf{P^{(k-1)}})
-\mathcal{N}(s,i;\mathbf{P^{(k)}})\big]\mathcal{N}(s,i;\mathbf{P^{(k-1)}})^{-1}\mathcal{L}(s,i;\mathbf{P^{(k)}})^{\top}\\
&\quad -\mathcal{N}(s,i;\mathbf{P^{(k-1)}})^{-1}\big[\mathcal{L}(s,i;\mathbf{P^{(k-1)}})-\mathcal{L}(s,i;\mathbf{P^{(k)}})\big]^{\top}\\
&=\mathcal{N}(s,i;\mathbf{P^{(k)}})^{-1}\widetilde{\mathcal{N}}(s,i;\mathbf{\Delta^{(k-1)}})\mathcal{N}(s,i;\mathbf{P^{(k-1)}})^{-1}\mathcal{L}(s,i;\mathbf{P^{(k)}})^{\top}\\
&\quad -\mathcal{N}(s,i;\mathbf{P^{(k-1)}})^{-1}\widetilde{\mathcal{L}}(s,i;\mathbf{\Delta^{(k-1)}})^{\top},
\end{align*}
where for any $i\in\mathcal{S}$ and $s\in[0,T]$,
\begin{align*}
\widetilde{\mathcal{N}}(s,i;\mathbf{\Delta^{(k-1)}})&\triangleq D(s,i)^{\top}\Delta^{(k-1)}(s,i)D(s,i)+\sum_{j\neq i}^{L} \pi_{ij}(s)F_{j}(s,i)^{\top}\Delta^{(k-1)}(s,j)F_{j}(s,i),\\
\widetilde{\mathcal{L}}(s,i;\mathbf{\Delta^{(k-1)}})&\triangleq \Delta^{(k-1)}(s,i)B(s,i)+C(s,i)^{\top}\Delta^{(k-1)}(s,i)D(s,i)
  \\
  &\quad+\sum_{j\neq i}^{L} \pi_{ij}(s)\big\{[\Delta^{(k-1)}(s,j)-\Delta^{(k-1)}(s,i)]F_{j}(s,i)+E_{j}(s,i)^{\top}\Delta^{(k-1)}(s,j)F_{j}(s,i)\big\}.
\end{align*}
Then by assumptions (A1)-(A2) and the uniform boundedness of $\{P^{(k)}(\cdot,i)\}_{k=1}^{\infty}$, there exist a sufficient large constant $M>0$ such that
\begin{align*}
 \big|\Lambda^{(k)}(s,i)^{\top}\mathcal{N}(s,i;\mathbf{P^{(k)}})\Lambda^{(k)}(s,i)\big|
 &\leq \big(|\Theta^{(k)}(s,i)|+|\Theta^{(k-1)}(s,i)|\big)\big|\mathcal{N}(s,i;\mathbf{P^{(k)}})\big|\big|\Lambda^{(k)}(s,i)\big|\\
&\leq M ||\mathbf{\Delta^{(k-1)}}(s)||,
\end{align*}
where
$$
||\mathbf{\Delta^{(k)}}(s)||\triangleq \max_{i\in\mathcal{S}}|\Delta^{(k)}(s,i)|,\quad k= 0,1,2,\cdots .
$$
Consequently, the equation \eqref{Delta-2} further yields
$$
||\mathbf{\Delta^{(k)}}(t)||\leq\int_{t}^{T}M\big[||\mathbf{\Delta^{(k)}}(s)||+||\mathbf{\Delta^{(k-1)}}(s)||\big]ds,\quad \forall s\in[0,T],\quad k\geq 1.
$$
By Gronwall's inequality, we obtain
$$
||\mathbf{\Delta^{(k)}}(t)||\leq Me^{MT}\int_{t}^{T}||\mathbf{\Delta^{(k-1)}}(s)||ds.
$$
Set
$$a\triangleq \max_{s\in[0,T]}||\mathbf{\Delta^{(0)}}(s)||.$$
By induction, one has
$$
||\mathbf{\Delta^{(k)}}(t)||\leq a\frac{(MTe^{MT})^{k}}{k!},\quad \forall t\in[0,T], \quad k=0,1,2,\cdots,
$$
which implies the uniform convergence of $\{\mathbf{P^{(k)}}(\cdot)\}_{k=0}^{\infty}$. Let $\mathbf{P}(\cdot)$ be  the limit of $\{\mathbf{P^{(k)}}(\cdot)\}_{k=0}^{\infty}$, then we have
$$
\mathcal{N}(s,i;\mathbf{P})\geq \mu I,\quad s\in[0,T],\quad i\in\mathcal{S},
$$
and as $k\rightarrow\infty$,
$$\left\{
 \begin{array}{l}
 \Theta^{(k)}(s,i)\rightarrow -\mathcal{N}(s,i;\mathbf{P})^{-1}\mathcal{L}(s,i;\mathbf{P})^{\top}\triangleq \Theta(s,i), \quad \text{in } L^{\infty}(0,T;\mathbb{R}^{m\times n}),\\[2mm]
 A^{(k)}(s,i)\rightarrow A(s,i)+B(s,i)\Theta(s,i),\quad \text{in } L^{\infty}(0,T;\mathbb{R}^{n\times n}),\\[2mm]
 C^{(k)}(s,i)\rightarrow C(s,i)+D(s,i)\Theta(s,i),\quad \text{in } L^{\infty}(0,T;\mathbb{R}^{n\times n}),\\[2mm]
E_{j}^{(k)}(s,i)\rightarrow E_{j}(s,i)+F_{j}(s,i)\Theta(s,i),\quad \text{in } L^{\infty}(0,T;\mathbb{R}^{n\times n}),\\[2mm]
Q^{(k)}(s,i)\rightarrow Q(s,i)+S(s,i)^{\top}\Theta(s,i)+\Theta(s,i)^{\top}S(s,i)
+\Theta(s,i)^{\top}R(s,i)\Theta(s,i),\quad \text{in } L^{\infty}(0,T;\mathbb{S}^{n}).
 \end{array}
 \right.
 $$
 Consequently, $\mathbf{P}(\cdot)$  satisfied the differential equation \eqref{lyapunov-1} with $\Theta(s,i)=-\mathcal{N}(s,i;\mathbf{P})^{-1}\mathcal{L}(s,i;\mathbf{P})^{\top}$, which is equivalent to CDREs \eqref{CDREs}.

\textbf{Sufficiency.} Let $\mathbf{P}(\cdot)$  be the solution of CDREs \eqref{CDREs} and satisfy the condition
$$\mathcal{N}(s,i;\mathbf{P})\geq \mu I,\quad a.e. \text{ }s\in[0,T],\quad \forall i\in\mathcal{S}.$$
Set
$$
\Theta(s,i)=-\mathcal{N}(s,i;\mathbf{P})^{-1}\mathcal{L}(s,i;\mathbf{P})^{\top}, \quad a.e. \text{ }s\in[0,T],\quad \forall i\in\mathcal{S}.
$$
Then by the proof of Theorem \ref{thm-closed-representation} and Lemma \ref{lem-3}, we have
\begin{align*}
J^{0}(0,0,i,;u(\cdot))&=\mathbb{E}\int_{0}^{T}\big<\mathcal{N}(s,\alpha(s);\mathbf{P})\big[u(s)-\Theta(s,\alpha(s))X^{0}(s;0,0,i,u)\big],u(s)\\
&\quad-\Theta(s,\alpha(s))X^{0}(s;0,0,i,u)\big>ds\\
&\geq \mu\delta\mathbb{E}\int_{0}^{T}\big<u(s),u(s)\big>ds,\quad \forall u(\cdot)\in\mathcal{U}[0,T],\quad \forall i\in\mathcal{S}.
\end{align*}
This completes the proof.
\end{proof}

\begin{remark}\rm
From the first part of the proof of Theorem \ref{thm-CDRE-uniform-convex}, we see that if \eqref{uniformly-convex-condition} holds, then the strongly regular solution of \eqref{CDREs} satisfies
$$
\mathcal{N}(s,i;\mathbf{P})\geq \mu I,
$$
with the same constant $\mu>0$.
\end{remark}

Theorem \ref{thm-CDRE-uniform-convex} establishes the existence of a strongly regular solution to CDREs \eqref{CDREs}. Combining with the Corollary \ref{coro-closed-representation-0}, we can obtain that a strongly regular solution to CDREs \eqref{CDREs} must satisfy
$$V^{0}(t,x,i)=\left<P(t,i)x,x\right>,\quad \forall (t,x,i)\in [0,T]\times\mathbb{R}^{n}\times\mathcal{S}.$$
By the unique representation of the value function for Problem (M-SLQ)$^{0}$, one can further confirm that the CDREs \eqref{CDREs} admit at most one strongly regular solution. Together with Theorem \ref{thm-closed-representation},  we have the following corollary.

\begin{corollary}
Suppose that the assumptions (A1)-(A2) and \eqref{uniformly-convex-condition} hold. Then CDREs \eqref{CDREs} admits a unique solution $\mathbf{P}(\cdot)\in\mathcal{D}(C(0,T;\mathbb{S}^{n}))$ such that $\mathcal{N}(\cdot;P,i)\gg 0$ for any $i\in\mathcal{S}$, and Problem (M-SLQ) is uniquely open-loop solvable for any given initial value $(t,x,i)\in[0,T]\times \mathbb{R}^{n}\times\mathcal{S}$. In this case, the open-loop optimal control admits the closed-loop representation \eqref{closed-loop-representation}, and the value function is given by \eqref{value-SLQ}.
\end{corollary}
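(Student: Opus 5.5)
The corollary follows by combining three results already established: Theorem \ref{thm-CDRE-uniform-convex} for existence, Corollary \ref{coro-closed-representation-0} for uniqueness, and Theorem \ref{thm-closed-representation} together with Theorem \ref{thm-open-loop-solvability-uniform-convex} for the control statements.

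\emph{Existence.} By the necessity part of Theorem \ref{thm-CDRE-uniform-convex}, the uniform convexity condition \eqref{uniformly-convex-condition} yields a solution $\mathbf{P}(\cdot)\in\mathcal{D}(C(0,T;\mathbb{S}^{n}))$ of the CDREs \eqref{CDREs} with $\mathcal{N}(\cdot,i;\mathbf{P})\geq \mu I$ for every $i\in\mathcal{S}$. Thus a strongly regular solution exists.

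\emph{Uniqueness.} Let $\mathbf{P}_1(\cdot)$ and $\mathbf{P}_2(\cdot)$ be two strongly regular solutions. Corollary \ref{coro-closed-representation-0} applies to each of them and gives
$$\langle P_1(t,i)x,x\rangle = V^{0}(t,x,i)=\langle P_2(t,i)x,x\rangle,\quad \forall (t,x,i)\in[0,T]\times\mathbb{R}^{n}\times\mathcal{S}.$$
The value function $V^{0}$ is defined intrinsically by Problem (M-SLQ)$^{0}$, so it does not depend on which solution is used. Since $P_1(t,i)-P_2(t,i)$ is symmetric and its quadratic form vanishes identically, polarization gives $P_1(t,i)=P_2(t,i)$ for all $t$ and $i$. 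The one point to check is that Corollary \ref{coro-closed-representation-0} assumes only strong regularity and not convexity, so it can be applied to an arbitrary strongly regular solution. This holds, because its proof is the completion-of-squares identity from Theorem \ref{thm-closed-representation}, which uses only $\mathcal{N}\gg0$.

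\emph{Control statements.} Unique open-loop solvability for every initial value $(t,x,i)$ is Theorem \ref{thm-open-loop-solvability-uniform-convex}. Since a strongly regular $\mathbf{P}(\cdot)$ exists, Theorem \ref{thm-closed-representation} applies once the linear BSDE \eqref{eta} has a solution $(\eta,\zeta,\mathbf{z})$. Its coefficients are bounded: $\mathcal{N}^{-1}$ is bounded by $\mu^{-1}$, and $\mathbf{P}$ and $\mathcal{L}$ are continuous hence bounded. Its inhomogeneous terms lie in $L^2$ by (A1)--(A2). Standard well-posedness of linear BSDEs driven by $W$ and $\widetilde N_j$ (e.g. \cite{cohen2015stochastic}) therefore gives a unique solution.

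Theorem \ref{thm-closed-representation} then shows that \eqref{closed-loop-representation} is an open-loop optimal control and that $V$ is given by \eqref{value-SLQ}. By uniqueness of the open-loop optimal control, this is \emph{the} optimal control. Its proof gives
$$J(t,x,i;u)=V(t,x,i)+\mathbb{E}\int_t^T\langle \mathcal{N}(u-u^*),u-u^*\rangle\,ds,$$
and since $\mathcal{N}\gg0$ this independently confirms that $u^*$ is the unique minimizer.

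The only potentially delicate step is solvability of the BSDE \eqref{eta}, which may need a brief justification. The rest is assembling cited results.
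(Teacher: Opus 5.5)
Your proposal is correct and follows essentially the same route as the paper: existence comes from the necessity part of Theorem \ref{thm-CDRE-uniform-convex}; uniqueness of the strongly regular solution follows because any such $\mathbf{P}$ satisfies $V^{0}(t,x,i)=\langle P(t,i)x,x\rangle$ by Corollary \ref{coro-closed-representation-0}; and the closed-loop representation and value function come from Theorem \ref{thm-closed-representation}. Your explicit check that the linear BSDE \eqref{eta} is well posed fills in a point the paper leaves implicit, and the only small imprecision is your ``independent'' uniqueness remark: the completion-of-squares identity actually reads $J=V+\mathbb{E}\int_{t}^{T}\langle\mathcal{N}w,w\rangle\,ds$ with $w(s)=u(s)+\mathcal{N}^{-1}\mathcal{L}^{\top}X(s;t,x,i,u)+\mathcal{N}^{-1}\widetilde{\rho}(s)$ (state driven by $u$ itself), not $w=u-u^{*}$ for the fixed process $u^{*}$, so that remark needs an extra appeal to uniqueness for the closed-loop SDE \eqref{state-optimal}, though uniqueness already follows from Theorem \ref{thm-open-loop-solvability-uniform-convex}, as you note.
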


\section{Application to mean-variance portfolio selection}\label{section-7}
In this section, we will solve a mean-variance (MV, for short) portfolio selection problem based on the developed results in previous sections.
%\subsection{Problem formulation}
Consider a financial market that consists of one risk-free asset (e.g., savings account) and $m$ risky assets (e.g., stocks). Suppose that the prices of the risk-free asset and risky assets are affected by the market mode $\alpha(\cdot)$ that switches among a finite number of states. More specifically, we assume that the price of the risk-free asset evolves according to
$$dS_{0}(t)=r(t,\alpha(t))S_{0}(t)dt,\qquad S_{0}(0)=1,$$
and the price processes of the risky assets are governed by %is given by
\begin{equation}\label{risky-asset-price}
\left\{
\begin{array}{l}
dS_{k}(t)=S_{k}(t-)\big[\mu_{k}(t,\alpha(t))dt+\sigma_{k}(t,\alpha(t))dW(t)
+\sum_{j=1}^{L}\gamma_{kj}(t,\alpha(t-))d\widetilde{N}_{j}(t)\big],\quad s\in[0,T],\\
S_{k}(0)=s_{k_0},\quad k=1,2,\cdots,m.
\end{array}
\right.
\end{equation}
If $\gamma_{kj}(\cdot,i)\equiv 0$ for all $k=1,2,\cdots,m$ and $i,\text{ }j\in\mathcal{S}$, then the price of risky assets will degenerate into the one studied in Zhou and Yin \cite{Zhou-2003-MV}. Here, we allow the prices of risky assets to experience a jump in growth or decline with the switching of market mode. Therefore, our model is more general than the one investigated in Zhou and Yin \cite{Zhou-2003-MV}.

Let $u(\cdot)=\big[u_{1}(\cdot),u_{2}(\cdot),\cdots,u_{m}(\cdot)\big]^{\top}$ be an investment strategy for a investor, in which $u_{k}(t)$ represents the \emph{dollar amount} invested in the $k$-th risky asset at time $t$. The set of all admissible investment strategies is denoted by $\mathcal{A}\triangleq
L_{\mathcal{P}}^{2}(0,T;\mathbb{R}^{m})$.
Thus, the wealth process $X(\cdot)$ associated with strategy $u(\cdot)$  is given by
\begin{equation}\label{wealth}
\left\{
\begin{array}{l}
dX(t)=\big[r(t,\alpha(t))X(t)+B(t,\alpha(t))u(t)\big]dt+\sigma(t,\alpha(t))u(t)dW(t)
+\sum_{j=1}^{L}\gamma_{j}(t,\alpha(t-))u(t)d\widetilde{N}_{j}(t),\\[2mm]
X(0)=x,
\end{array}
\right.
\end{equation}
where
\begin{align*}
&B(\cdot,i)=\big[\mu_{1}(\cdot,i)-r(\cdot,i),\mu_{2}(\cdot,i)-r(\cdot,i),\cdots, \mu_{m}(\cdot,i)-r(\cdot,i)\big]\in L^{\infty}(0,T;\mathbb{R}^{m}),\\
&\sigma(\cdot,i)=\big[\sigma_{1}(\cdot,i),\sigma_{2}(\cdot,i),\cdots, \sigma_{m}(\cdot,i)\big]\in L^{\infty}(0,T;\mathbb{R}^{m}),\\
&\gamma_{j}(\cdot,i)=\big[\gamma_{1j}(\cdot,i),\gamma_{2j}(\cdot,i),\cdots, \gamma_{mj}(\cdot,i)\big]\in L^{\infty}(0,T;(-1,+\infty)^{m}).
\end{align*}
Throughout this section, %we fixed the initial value of Markov chain with $\alpha(0)=i\in\mathcal{S}$ and
 denote the solution to \eqref{wealth} as $X^{u}(\cdot;x,i)$ to emphasize the dependence of the wealth process on initial values $(x,i)$ and investment strategy $u$. Clearly, given $u(\cdot)\in\mathcal{A}$ and $(x,i)\in\mathbb{R}\times \mathcal{S}$, the SDE \eqref{wealth} admits a unique solution $X^{u}(\cdot;x,i)\in L_{\mathbb{F}}^{2}(0,T;\mathbb{R})$. In the rest of this section, we assume the following uniformly nondegenerate condition always holds:
\begin{equation}\label{uniformly-nondegenerate-condition}
\begin{array}{l}
\sigma(t,i)^{\top}\sigma(t,i)+\sum_{j=1}^{L}\lambda_{j}(t)\gamma_{j}(t,i)^{\top}\gamma_{j}(t,i)\geq \delta I,\quad a.e.\text{ }t\in[0,T], \text{ }i\in\mathcal{S},\text{ for some } \delta>0.
\end{array}
\end{equation}

We now proceed to formulate the MV portfolio selection problem that is of pivotal interest to this section.

\textbf{Problem (MV):} For $(x,i)\in\mathbb{R}\times\mathcal{S}$, find an admissible portfolio $u^{*}(\cdot)\in\mathcal{A}$ such that
%Under the MV criterion, the investor seeks to solve the following portfolio selection problem:
\begin{equation}\label{MV}
 % \textbf{Problem MV: }
  \left\{
  \begin{aligned}
  &J_{MV}(x,i;u^{*}(\cdot))=\min_{u(\cdot)\in \mathcal{A}}\text{ } J_{MV}(x,i;u(\cdot)):=
  %\min_{\pi(\cdot)\in\mathcal{A}}
  \mathbb{E}\big[\big(X^{u}(T;x,i)-z\big)^{2}\big],\\
 & \text{subject to }\left\{\begin{array}{l}
  \mathbb{E}[X^{u}(T;x,i)\big]=z,\\[2mm]
  X^{u}(\cdot;x,i) \text{ is the solution to \eqref{wealth}.}
  \end{array}
  \right.
  \end{aligned}
  \right.
\end{equation}
\begin{definition}
  The Problem (MV) is called feasible if there is at least one portfolio satisfying all the constraints. The problem is called finite if it is feasible and the infimum of $J_{MV}(x,i,u(\cdot))$ is finite. Finally, an optimal portfolio to the above problem, if it ever exists, is called an efficient portfolio corresponding to $z$, and the corresponding $(\text{Var } X^{u}(T;x,i),z)$ is called an efficient point. The set of all the efficient points is called the efficient frontier.
\end{definition}

We point out that the formulation of Problem (MV) is consistent with the MV portfolio selection problems studied in \cite{Lim-2002-MV,Zhou-2003-MV,Zhang-Elliott-Siu-2012,Shen-2022-MV}, et al.
%\subsection{Feasibility}
We begin to solve this problem by considering the following system of linear ordinary differential equations (ODEs, for short):
\begin{equation}\label{psi}
  \left\{
  \begin{array}{l}
  \dot{\psi}(t,i)=-r(t,i)\psi(t,i)-\sum_{j=1}^{L}\pi_{ij}(t)\psi(t,j),\\[2mm]
  \psi(T,i)=1,\quad i\in\mathcal{S}.
  \end{array}
  \right.
\end{equation}
Then we have the following result.

\begin{proposition}\label{prop-feasibility}
  Let $\psi(\cdot,i),\text{ }i\in\mathcal{S}$, be the solutions of the coupled linear ODEs \eqref{psi}. Then the Problem (MV) is feasible for every $z\in\mathbb{R}$ if and only if
  \begin{equation}\label{feasibility-condition}
  \begin{array}{c}
  \delta:=\mathbb{E}\int_{0}^{T}\big|\psi(t,\alpha(t))B(t,\alpha(t))+\sum_{j=1}^{L}\lambda_{j}(t)\big[\psi(t,j)-\psi(t,\alpha(t))\big]\gamma_{j}(t,\alpha(t))\big|^{2}dt>0.
   \end{array}
  \end{equation}
\end{proposition}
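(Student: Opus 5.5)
The plan is to compute $\mathbb{E}[X^{u}(T;x,i)]$ explicitly as an affine functional of $u$, using the process $\psi(t,\alpha(t))$ as a duality weight. First I will apply It\^o's formula to $\psi(t,\alpha(t))X^{u}(t)$, with $X^{u}(\cdot)\equiv X^{u}(\cdot;x,i)$. The differential of $\psi(t,\alpha(t))$, exactly as in the proof of Proposition \ref{prop-M0}, is
\[
d\psi(t,\alpha(t))=\Big\{\dot\psi(t,\alpha(t))+\sum_{j=1}^{L}\lambda_{j}(t)\big[\psi(t,j)-\psi(t,\alpha(t))\big]\Big\}dt+\sum_{j=1}^{L}\big[\psi(t,j)-\psi(t,\alpha(t-))\big]d\widetilde{N}_{j}(t).
\]
The product rule then involves the drift terms, the Brownian and compensated-jump martingale terms, and the quadratic covariation coming from simultaneous jumps. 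The jumps of $\psi(t,\alpha(t))$ and of $X^{u}$ both occur through $N_{j}$, so their covariation has compensator $\sum_{j}\lambda_{j}(t)[\psi(t,j)-\psi(t,\alpha(t))]\gamma_{j}(t,\alpha(t))u(t)\,dt$.

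Next I will use \eqref{psi}, which gives $\dot\psi(t,i)+\sum_{j}\pi_{ij}\psi(t,j)=-r(t,i)\psi(t,i)$. Together with $\sum_{j}\lambda_j(t)[\psi(t,j)-\psi(t,\alpha(t))]=\sum_j\pi_{\alpha(t)j}(t)\psi(t,j)$, which holds because $\pi$ has zero row sums, this shows that the $X$-terms in the drift cancel. Taking expectations (the martingale terms are true martingales since $u\in L^2_{\mathcal P}$ and $X^u\in\mathcal S^2_{\mathbb F}$) and using $\psi(T,\cdot)=1$ should give
\[
\mathbb{E}[X^{u}(T;x,i)]=\psi(0,i)x+\mathbb{E}\int_{0}^{T}\big\langle \beta(t),u(t)\big\rangle dt,
\]
\[
\beta(t):=\Big[\psi(t,\alpha(t))B(t,\alpha(t))+\sum_{j=1}^{L}\lambda_{j}(t)\big[\psi(t,j)-\psi(t,\alpha(t))\big]\gamma_{j}(t,\alpha(t))\Big]^{\top}.
\]
The feasibility condition \eqref{feasibility-condition} is precisely $\mathbb{E}\int_0^T|\beta|^2dt>0$.

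With this formula both directions are short. For sufficiency, if $\delta>0$, take $u=c\,\beta(\cdot)$, which is predictable up to modification, since $\alpha(t)=\alpha(t-)$ for a.e. $t$, and bounded. Then $\mathbb{E}[X^u(T)]=\psi(0,i)x+c\delta$, and choosing $c=(z-\psi(0,i)x)/\delta$ meets the constraint for every $z$. For necessity, if $\delta=0$, then $\beta=0$ $dt\otimes d\mathbb P$-a.e., so $\mathbb{E}[X^u(T)]=\psi(0,i)x$ for every $u\in\mathcal A$. Hence any $z\neq\psi(0,i)x$ is infeasible, contradicting feasibility for every $z$.

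I expect the main obstacle to be the It\^o product computation with the regime jumps. Specifically, I must correctly identify the covariation term between the jump of $\psi(t,\alpha(t))$ into state $j$ and the state jump $\gamma_j(t,\alpha(t-))u(t)\,\Delta N_j(t)$, and verify that the compensator terms combine with \eqref{psi} so that every term in $X$ cancels. The interchange of $\alpha(t-)$ and $\alpha(t)$ inside the $dt$-integrals is harmless, because the two differ only on a Lebesgue-null set of times.
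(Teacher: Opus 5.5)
Your proposal is correct and follows essentially the same route as the paper: It\^o's formula applied to $\psi(t,\alpha(t))X^{u}(t)$ turns $\mathbb{E}[X^{u}(T)]$ into an affine functional of $u$ with kernel $\beta(\cdot)$. Sufficiency then comes from a control proportional to $\beta$, and necessity from $\beta=0$ a.e.

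The differences are cosmetic. The paper separates the $x$-part as $z^{0}=\mathbb{E}[xe^{\int_0^T r(t,\alpha(t))dt}]$, which equals your $\psi(0,i)x$. Your choice $u=c\,\beta$ is also the right one: the control written in the paper's sufficiency step uses $\sum_{j}\pi_{\alpha(t-)j}\psi(t,j)\gamma_{j}$, which does not coincide with $\beta$ when $\gamma_j$ genuinely depends on $j$.
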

\begin{proof}
\textbf{Sufficiency.}
  Let
  $$
  u_{0}(t)\equiv [0,0,\cdots,0]^{\top},\quad
 u(t)=\psi(t,\alpha(t-))B(t,\alpha(t-))+\sum_{j=1}^{L}\pi_{\alpha(t-)j}\psi(t,j)\gamma_{j}(t,\alpha(t-)).
  $$
  Then for any $\beta\in\mathbb{R}$, the associated expected terminal wealth to the admissible strategy $u^{\beta}(\cdot):=u_{0}(\cdot)+\beta u(\cdot)$ is given by
  $$
  \mathbb{E}[X^{u^{\beta}}(T;x,i)]=\mathbb{E} [X^{u_{0}}(T;x,i)]+\beta\mathbb{E}[X^{u}(T;0,i)],
  $$
  with
\begin{equation}\label{z0}
\mathbb{E} [X^{u_{0}}(T;x,i)]=\mathbb{E} [xe^{\int_{0}^{T}r(t,\alpha(t))dt}]\triangleq z^{0}.
\end{equation}
%and $X_{0}^{\pi}(\cdot)$ is the solution to the following SDE:
%\begin{equation}\label{wealth-0}
%\left\{
%\begin{array}{l}
%dX_{0}^{\pi}(t)=\big[r(t,\alpha(t))X_{0}^{\pi}(t)+B(t,\alpha(t))\pi(t)\big]dt+\sigma(t,\alpha(t))\pi(t)dW(t)
%+\sum_{j=1}^{L}\gamma_{j}(t,\alpha(t-))\pi(t)d\widetilde{N}_{j}(t),\\[2mm]
%X_{0}^{\pi}(0)=0,\quad \alpha(0)=i.
%\end{array}
%\right.
%\end{equation}
Applying the It\^o's rule to $\psi(t,\alpha(t))X^{u}(t;0,i)$ and taking the expectation, we have
\begin{equation}\label{p-1}
\begin{array}{l}
  \mathbb{E}[X^{u}(T;0,i)]=\mathbb{E}[\psi(T,\alpha(T))X^{u}(T;0,i)]\\[2mm]
 =\mathbb{E}\int_{0}^{T}\big\{\psi(t,\alpha(t))\big[r(t,\alpha(t))X^{u}(t;0,i)+B(t,\alpha(t))u(t)\big]\\
  \quad+\big[\dot{\psi}(t,\alpha(t))+\sum_{j=1}^{L}\lambda_{j}(t)(\psi(t,j)-\psi(t,\alpha(t)))\big]X^{u}(t;0,i)\\[2mm]
  \quad+\sum_{j=1}^{L}\lambda_{j}(t)(\psi(t,j)-\psi(t,\alpha(t)))\gamma_{j}(t,\alpha(t-))u(t)\big\}dt\\[2mm]
 =\mathbb{E}\int_{0}^{T}\big\{\big[\psi(t,\alpha(t))r(t,\alpha(t))
 +\dot{\psi}(t,\alpha(t))+\sum_{j=1}^{L}\lambda_{j}(t)(\psi(t,j)-\psi(t,\alpha(t)))\big]X^{u}(t;0,i)\\[2mm]
 \quad+\big[\psi(t,\alpha(t))B(t,\alpha(t))+\sum_{j=1}^{L}\lambda_{j}(t)(\psi(t,j)-\psi(t,\alpha(t)))\gamma_{j}(t,\alpha(t-))\big]u(t)\big\}dt\\[2mm]
  = \mathbb{E}\int_{0}^{T}\big|\psi(t,\alpha(t))B(t,\alpha(t))+\sum_{j=1}^{L}\lambda_{j}(t)(\psi(t,j)-\psi(t,\alpha(t)))\gamma_{j}(t,\alpha(t))\big|^{2}dt>0.
  \end{array}
\end{equation}
Consequently, for any given $z\in\mathbb{R}$, we complete the sufficiency proof by selecting $\beta=\frac{z-z^{0}}{\delta}$.

\textbf{Necessity.} Suppose that the condition \eqref{feasibility-condition} invalid. Then for any $i\in\mathcal{S}$, the following holds:
\begin{equation}\label{pp}
  \psi(t,i)B_{k}(t,i)+\sum_{j\neq i}^{L}\pi_{ij}\big[\psi(t,j)-\psi(t,i)\big]\gamma_{kj}(t,i)=0,\quad a.e. \text{ }t\in[0,T], \quad \forall k=1,2,\cdots,m.
\end{equation}
In this case, for any admissible portfolio $u(\cdot)$, we have
$$
\begin{aligned}
\mathbb{E}[X^{u}(T;x,i)]&=z^{0}+\mathbb{E}[X^{u}(T;0,i)]\\
&=z^{0}+\mathbb{E}\int_{0}^{T}\big[\psi(t,\alpha(t))B(t,\alpha(t))
+\sum_{j=1}^{L}\lambda_{j}(t)(\psi(t,j)-\psi(t,\alpha(t)))\gamma_{j}(t,\alpha(t-))\big]u(t)dt\\
&=z^{0}.
\end{aligned}
$$
Hence, Problem (MV) is feasible only if $z=z^{0}$, which contradicts the feasibility of Problem (MV) for every $z\in\mathbb{R}$. Consequently, we complete the necessity proof by contradiction.
\end{proof}

\begin{remark}\rm
  Based on the above analysis, we can find that the condition \eqref{feasibility-condition} is very mild, and it is invalid if and only if for any $i\in\mathcal{S}$, the condition \eqref{pp} holds. Clearly, this is impossible for complex and volatile financial markets.
\end{remark}

Notice that Problem (MV) is a constrained stochastic optimization problem, with an equality constraint. To solve this problem, we consider the following modified objective:
\begin{equation}\label{J1}
  J_{1}(x,i;u(\cdot),\theta)\triangleq \mathbb{E}\big[\big(X^{u}(T)-z\big)^{2}+2\theta\big(\mathbb{E}[X^{u}(T)]-z\big)\big]
  =\mathbb{E}\big[\big(X^{u}(T)+(\theta-z)\big)^{2}\big]-\theta^{2},
\end{equation}
where $\theta \in \mathbb{R}$ is the Lagrange multiplier. Then, by the Lagrangian duality theorem, we know that solving the Problem  (MV) is equivalent to solving the following max-min problem:
\begin{equation}\label{max-min}
 %\textbf{Problem 1: }
 \max_{\theta\in\mathbb{R}}\min_{u(\cdot)\in \mathcal{A}} \text{ }J_{1}(x,i;u(\cdot),\theta)
 % :=\max_{\theta\in\mathbb{R}}\min_{\pi(\cdot)\in \mathcal{A}}
 % \mathbb{E}\big[\big(X^{u}(T)+(\theta-z)\big)^{2}\big]-\theta^{2}.
\end{equation}
Consequently, to solve the above problem, we first need to solve the following unconstrained minimization problem:
\begin{equation}\label{min}
%\textbf{Problem 2: }
\min_{u(\cdot)\in \mathcal{A}} \text{ }J_{2}(x,i;u(\cdot),c)
  :=%\min_{\pi(\cdot)\in \mathcal{A}}
  \mathbb{E}\big[\big(X^{u}(T)+c\big)^{2}\big].
\end{equation}
Once we obtain the optimizer $u^{*}(\cdot;c)$ to problem \eqref{min} parameterized by $c\in\mathbb{R}$, we can further solve the problem \eqref{max-min} by considering
\begin{equation}\label{max}
\max_{\theta\in\mathbb{R}}\text{ }J_{3}(x,i;\theta)
:=%\max_{\theta\in\mathbb{R}}
J_{2}(x,i;u^{*}(\cdot;\theta-z),\theta-z)-\theta^{2}.
\end{equation}
Let $\theta^{*}$ be the maximizer of problem \eqref{max}. Then the efficient strategy for Problem (MV) is given by
\begin{equation}\label{efficient-strategy-theta}
 u^{*}(\cdot)=u^{*}(\cdot;\theta^{*}-z).
\end{equation}

%\subsection{Solution to the unconstrained problem}
Now, we return to solve the unconstrained problem \eqref{min}, which is a standard SLQ control problem for a general regime-switching jump diffusion model.
%with $A(s,\alpha(s))$ and $Q(s,\alpha(s))\equiv 0,\text{ }S(s,\alpha(s))\equiv 0,\text{ }R(s,\alpha(s))\equiv 0,\text{ }G(T,\alpha(T))\equiv 1,\text{ }g\equiv c$.
To adopt the derived results, we first need to verify that problem \eqref{min} is uniformly convex, that is to say, we need to verify that  there exists a constant $\mu>0$ such that
\begin{equation}\label{uniformly-convex-condition-exam}
  \mathbb{E} [X^{u}(T;0,i)^{2}]>\mu\mathbb{E}\int_{0}^{T}\big|u(t)\big|^{2}dt,\quad \forall u(\cdot)\in\mathcal{A},\quad \forall i\in\mathcal{S}.
\end{equation}
%where $X_{0}^{\pi}(\cdot)$ is the solution to \eqref{wealth-0}.

\begin{proposition}\label{prop-convex-exam}
  If the uniformly nondegenerate condition \eqref{uniformly-nondegenerate-condition} holds, then the following holds:
  $$\mathbb{E} [X^{u}(T;x,i)^{2}]>\mu\mathbb{E}\int_{0}^{T}\big|u(t)\big|^{2}dt,\quad \forall u(\cdot)\in\mathcal{A},\quad \forall (x,i)\in\mathbb{R}\times \mathcal{S}.$$
\end{proposition}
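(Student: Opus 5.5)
The plan is to reduce the statement to the uniform convexity of Problem \eqref{min}. We then handle a nonzero initial wealth through the Riccati representation of Corollary \ref{coro-closed-representation-0}, applied to the shifted cost $J_{\mu}(x,i;u)\triangleq\mathbb{E}[X^{u}(T;x,i)^{2}]-\mu\mathbb{E}\int_{0}^{T}|u(t)|^{2}dt$. Problem \eqref{min} is the special case $n=1$, $A=r$, $B=B$, $C=0$, $D=\sigma$, $E_{j}=0$, $F_{j}=\gamma_{j}$, $Q=S=0$, $R=0$ (for $J_{\mu}$, $R=-\mu$), $G=1$ of Problem (M-SLQ).

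\emph{Step 1 (the case $x=0$).} We show that there is $\mu_{0}>0$ with $\mathbb{E}[X^{u}(T;0,i)^{2}]\ge\mu_{0}\mathbb{E}\int_{0}^{T}|u|^{2}dt$, using Theorem \ref{thm-CDRE-uniform-convex}. The scalar CDREs \eqref{CDREs} here read $-\dot P(s,i)=2r P(s,i)+\sum_{j}\pi_{ij}[P(s,j)-P(s,i)]-\mathcal{L}\mathcal{N}^{-1}\mathcal{L}^{\top}$, with
\begin{align*}
\mathcal{N}(s,i;\mathbf{P})&=P(s,i)\sigma^{\top}\sigma+\sum_{j\ne i}\pi_{ij}P(s,j)\gamma_{j}^{\top}\gamma_{j},\\
\mathcal{L}(s,i;\mathbf{P})&=P(s,i)B+\sum_{j\ne i}\pi_{ij}[P(s,j)-P(s,i)]\gamma_{j}.
\end{align*}
We would obtain a strongly regular solution by comparison: as long as all $P(\cdot,j)\ge c>0$, \eqref{uniformly-nondegenerate-condition} gives $\mathcal{N}\ge c\delta I$.

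To keep $P$ away from zero, rewrite the generator. Writing $w_{j}=\pi_{ij}P(s,j)$, the vector $\mathcal{L}$ is a combination of the columns $\sigma$, $\gamma_{j}$ whose Gram matrix is $\mathcal{N}$. A Cauchy--Schwarz/projection bound then gives $\mathcal{L}\mathcal{N}^{-1}\mathcal{L}^{\top}\le K\,\max_{j}P(s,j)$. Consequently $\underline{p}(s)\triangleq\min_{i}P(s,i)$ satisfies a linear differential inequality, and Gronwall backward from $P(T,i)=1$ yields $P(s,i)\ge e^{-K'T}>0$. The sufficiency part of Theorem \ref{thm-CDRE-uniform-convex} then gives uniform convexity with some $\mu_{0}>0$.

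\emph{Step 2 (general $x$).} Fix $\mu\in(0,\mu_{0})$. Since $J_{\mu}^{0}(0,0,i;u)\ge(\mu_{0}-\mu)\mathbb{E}\int|u|^{2}$, the cost $J_{\mu}$ is still uniformly convex. Theorem \ref{thm-CDRE-uniform-convex} and Corollary \ref{coro-closed-representation-0} then give a strongly regular $\mathbf{P}_{\mu}$ with $\inf_{u}J_{\mu}(x,i;u)=P_{\mu}(0,i)x^{2}$. Hence
$$\mathbb{E}[X^{u}(T;x,i)^{2}]-\mu\mathbb{E}\int_{0}^{T}|u|^{2}dt\ge P_{\mu}(0,i)x^{2}+\mathbb{E}\int_{0}^{T}\mathcal{N}_{\mu}|u-u^{*}_{\mu}|^{2}dt,$$
by the completion-of-squares identity in the proof of Theorem \ref{thm-closed-representation}. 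Strict inequality will follow once we show $P_{\mu}(0,i)>0$. Indeed, the right-hand side vanishes only if $x=0$ and $u=u^{*}_{\mu}=0$, and the statement is trivial in that degenerate case, where both sides are $0$; so we read ``$>$'' for $(x,u)\ne(0,0)$.

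\emph{Main obstacle: positivity of $P_{\mu}(0,i)$ for small $\mu>0$.} The Riccati equation for $J_{\mu}$ has $R=-\mu<0$, so the lower-bound argument of Step 1 must absorb the $-\mu$ in $\mathcal{N}_{\mu}=\mathcal{N}-\mu$.

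First, I would prove the positive lower bound $P_{0}(s,i)\ge e^{-K'T}$ of Step 1 uniformly, as above. Second, I would show $\mathbf{P}_{\mu}\to\mathbf{P}_{0}$ uniformly as $\mu\downarrow0$. For this I would use the iteration in the proof of Theorem \ref{thm-CDRE-uniform-convex}: all iterates obey the bounds of Proposition \ref{lem-2} uniformly for $\mu\le\mu_{0}/2$, so the Gronwall estimate makes the fixed point Lipschitz in $\mu$. Choosing $\mu$ small then gives $P_{\mu}(0,i)\ge\frac12 e^{-K'T}>0$ for every $i\in\mathcal{S}$, which completes the argument.

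If the projection bound in Step 1 proves too lossy, the fallback is the stochastic representation from the proof of Proposition \ref{prop-M0}. Under the closed-loop feedback, $P_{0}(0,i)x^{2}=\mathbb{E}[X_{0}^{*}(T)^{2}]$, with $X_{0}^{*}$ a linear SDE in the state. Positivity would then reduce to showing that the closed-loop jump factors $1-\gamma_{j}\Theta$ cannot make $X_{0}^{*}(T)=0$ almost surely.
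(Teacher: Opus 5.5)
Your overall route is viable: build a strongly regular CDRE solution, use the sufficiency half of Theorem~\ref{thm-CDRE-uniform-convex}, then treat $x\neq0$. The gap is in Step~1, which carries all the positivity content, since it must deliver both $\mu_0$ and $P_0(0,i)>0$. Its key estimate is false, and it would not close the argument even if it were true.

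\textbf{The bound fails.} Take $m=2$, $\sigma(\cdot,i)=(1,0)$ and $\gamma_j(\cdot,i)=(0,1)$, which is compatible with \eqref{uniformly-nondegenerate-condition}. Then $\mathcal{L}\mathcal{N}^{-1}\mathcal{L}^{\top}=P(s,i)B_1^2+[P(s,i)B_2+\pi_{ij}(P(s,j)-P(s,i))]^2/(\pi_{ij}P(s,j))$. This blows up as $P(s,j)\downarrow0$ with $P(s,i)$ fixed, so $\mathcal{L}\mathcal{N}^{-1}\mathcal{L}^{\top}\le K\max_j P(s,j)$ cannot hold.

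\textbf{It would not close anyway.} A bound by $\max_jP$ only gives $-\dot{\underline p}\ge -c\,\underline p-K\max_jP$ at the minimizing index. That inequality does not keep $\underline p$ away from $0$.

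\textbf{What works instead.} Estimate only at an index $i$ with $P(s,i)=\min_kP(s,k)$. Write $B(s,i)=\beta^{\top}\mathcal{H}(s,i)$ with $\beta=\mathcal{H}(s,i)^{-1}B(s,i)^{\top}$, and expand $\mathcal{L}$ in $\sigma$ and the $\gamma_j$. Pair each coefficient with its own weight in $\mathcal{N}$ (projection bound). Then use $P(s,j)\ge P(s,i)$, i.e.\ $[P(s,j)-P(s,i)]^2/P(s,j)\le P(s,j)-P(s,i)$, to obtain
\[
\mathcal{L}(s,i;\mathbf{P})\mathcal{N}(s,i;\mathbf{P})^{-1}\mathcal{L}(s,i;\mathbf{P})^{\top}\le K P(s,i)+\sum_{j\neq i}\pi_{ij}(s)\big[P(s,j)-P(s,i)\big].
\]
The coupling term of $\mathcal{M}$ now cancels the jump part with coefficient exactly one. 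A crude splitting $2a\mathcal{N}^{-1}a^{\top}+2b\mathcal{N}^{-1}b^{\top}$ would fail at exactly this point. You get $-\dot{\underline p}\ge-K'\underline p$ in the Dini sense. Together with an upper bound, local existence and continuation, this gives the strongly regular solution with $P\ge e^{-K'T}$.

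\textbf{Step~2.} With that repair, Step~2 is sound, and you can drop the $\mu$-perturbation. Set $v=u-\Theta X^u$. Completion of squares gives $\mathbb{E}[X^u(T;x,i)^2]=P_0(0,i)x^2+\mathbb{E}\int_0^T\langle\mathcal{N}v,v\rangle ds$. Since $X^u$ is the closed-loop state driven by $v$ from $x$, and $x^2\le\mathbb{E}[X^u(T;x,i)^2]/P_0(0,i)$, you get $\mathbb{E}\int_0^T|u|^2ds\le C(\mathbb{E}\int_0^T|v|^2ds+x^2)\le C'\mathbb{E}[X^u(T;x,i)^2]$. Your reading of ``$>$'' as applying only when $(x,u)\neq(0,0)$ is correct. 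The paper's argument likewise gives only ``$\ge$'', with strictness for $(x,u)\neq(0,0)$.

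\textbf{Comparison with the paper.} The paper's proof is much shorter and uses no Riccati theory. It reads the wealth equation backward: $(\mathcal{Y},\mathcal{Z},\Xi_j)=(X^u,\sigma u,\gamma_j u)$ solves a BSDE with terminal value $X^u(T;x,i)$. Its driver is linear and homogeneous, because \eqref{uniformly-nondegenerate-condition} lets one recover $u=\mathcal{H}^{-1}[\sigma^{\top}\mathcal{Z}+\sum_j\lambda_j\gamma_j^{\top}\Xi_j]$. The standard a priori estimate then gives $\delta\,\mathbb{E}\int_0^T|u|^2dt\le\mathbb{E}\int_0^Tu^{\top}\mathcal{H}u\,dt\le K\,\mathbb{E}[X^u(T;x,i)^2]$. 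Because the estimate sees only the terminal value, it handles every $x$ at once, so neither your Step~2 nor any lower bound on $\mathbf{P}$ is needed. Your fallback remark, that the closed loop cannot steer $X(T)$ to $0$ unless $x=0$, is exactly the backward-uniqueness fact this argument encodes. What your route gains is an explicit lower bound $P(\cdot,i)\ge e^{-K'T}$ read off from \eqref{CDREs-exam} itself. The cost is considerably higher, and you only get it once the minimum-index estimate above is supplied.
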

\begin{proof}
To obtain the estimation  \eqref{uniformly-convex-condition}, we need to review SDE \eqref{wealth} as a BSDE. For any given $u(\cdot)\in\mathcal{A}$ and $i\in\mathcal{S}$, let
$$
\mathcal{Y}(t)=X^{u}(t;x,i),\quad \mathcal{Z}(t)=\sigma(t,\alpha(t))u(t),\quad \Xi_{j}(t)=\gamma_{j}(t,\alpha(t-))u(t),\quad a.e.\text{ }t\in[0,T],\quad a.s.,
$$
and
$$
\begin{array}{c}
\mathcal{H}(t,i)=\sigma(t,i)^{\top}\sigma(t,i)+\sum_{j=1}^{L}\lambda_{j}(t)\gamma_{j}(t,i)^{\top}\gamma_{j}(t,i)\geq \delta I,
\quad a.e.\text{ }t\in[0,T].
\end{array}
$$
Then we have
$$
\begin{array}{c}
u(t)=\mathcal{H}(t,\alpha(t-))^{-1}\big[\sigma(t,\alpha(t-))^{\top}\mathcal{Z}(t)
+\sum_{j=1}^{L}\lambda_{j}(t)\gamma_{j}(t,\alpha(t-))^{\top}\Xi_{j}(t)\big],\quad a.e.\text{ }t\in[0,T],\quad a.s.,
\end{array}
$$
and $(\mathcal{Y}(\cdot),\mathcal{Z}(\cdot),\mathbf{\Xi}(\cdot))$ solves the following BSDE:
\begin{equation}\label{Y-BSDE}
\left\{
\begin{array}{l}
  d\mathcal{Y}(t)=\big\{B(t,\alpha(t))\mathcal{H}(t,\alpha(t))^{-1}\big[\sigma(t,\alpha(t))^{\top}\mathcal{Z}(t)+\sum_{j=1}^{L}\lambda_{j}(t) \gamma_{j}(t,\alpha(t))^{\top}\Xi_{j}(t)\big]\\[2mm]
   \qquad \qquad+r(t,\alpha(t))\mathcal{Y}(t)\big\}dt+\mathcal{Z}(t)dW(t)+\sum_{j=1}^{L}\Xi_{j}(t)d\widetilde{N}_{j}(t),\quad t\in [0,T],\\[2mm]
   \mathcal{Y}(T)=X^{u}(T;x,i).
\end{array}
\right.
\end{equation}
Based on the priori estimate of the BSDE (see, for example, Delong \cite[Lemma 3.1.1]{Delong-2013}), we can further obtain
\begin{equation}\label{estmation}
  \begin{array}{c}
  \mathbb{E}\Big[\sup_{t\in[0,T]}\mathcal{Y}(t)^{2}
  +\int_{0}^{T}\big(\mathcal{Z}(t)^{2}+\sum_{j=1}^{L}\lambda_{j}(t)\Xi_{j}(t)^{2}\big)dt\Big]\leq K\mathbb{E}[X^{u}(T;x,i)^{2}],\quad \text{for some } K>0.
  \end{array}
\end{equation}
On the other hand, note that
%$\sigma(\cdot,i)\in L^{\infty}(0,T;\mathbb{R}^{m})$ and $\gamma_{j}(\cdot,i)\in L^{\infty}(0,T;(-1,+\infty)^{m})$,
$$
\begin{array}{l}
 \quad \mathbb{E}\Big[\int_{0}^{T}\big(\mathcal{Z}(t)^{2}+\sum_{j=1}^{L}\lambda_{j}(t)\Xi_{j}(t)^{2}\big)dt\Big]\\[2mm]
=\mathbb{E}\int_{0}^{T}u(t)^{\top}\big(\sigma(t,\alpha(t))^{\top}\sigma(t,\alpha(t))
+\sum_{j=1}^{L}\lambda_{j}(t)\gamma_{j}(t,\alpha(t))^{\top}\gamma_{j}(t,\alpha(t))\big)u(t)dt\\[2mm]
\geq \delta \mathbb{E}\int_{0}^{T}|u(t)|^{2}dt.
\end{array}
$$
Hence, substituting the above estimation into \eqref{estmation}, we obtain the desired result.
\end{proof}

Having obtained the uniform convexity of problem \eqref{min}, we proceed with the study of its optimal control. First, the notation \eqref{notation-MLN} in this special case becomes:
\begin{equation}\label{notation-MLN-exam}
\left\{
  \begin{array}{l}
  \mathcal{M}(s,i;\mathbf{P})\triangleq 2P(s,i)r(s,i)+\sum_{j\neq i}^{L} \pi_{ij}(s)\big[P(s,j)-P(s,i)\big],\\[2mm]
  \mathcal{L}(s,i;\mathbf{P})\triangleq P(s,i)B(s,i)+\sum_{j\neq i}^{L} \pi_{ij}(s)\big[P(s,j)-P(s,i)\big]\gamma_{j}(s,i),\\[2mm]
   \mathcal{N}(s,i;\mathbf{P})\triangleq P(s,i)\sigma(s,i)^{\top}\sigma(s,i)+\sum_{j\neq i}^{L} \pi_{ij}(s)P(s,j)\gamma_{j}(s,i)^{\top}\gamma_{j}(s,i),\quad s\in[0,T],\text{ }i\in\mathcal{S}.
  \end{array}
  \right.
\end{equation}
By Theorem \ref{thm-CDRE-uniform-convex}, the CDREs:
\begin{equation}\label{CDREs-exam}
 \left\{
 \begin{aligned}
 -\dot{P}(s,i)&=\mathcal{M}(s,i;\mathbf{P})-\mathcal{L}(s,i;\mathbf{P})\mathcal{N}(s,i;\mathbf{P})^{-1}\mathcal{L}(s,i;\mathbf{P})^{\top},\quad s\in[0,T],\\
 P(T,i)&=1,\quad i\in\mathcal{S},
 \end{aligned}
 \right.
\end{equation}
admits a unique solution $\mathbf{P}(\cdot)\in\mathcal{D}(C(0,T;\mathbb{R}))$ such that $\mathcal{N}(\cdot,i;\mathbf{P})\gg 0$. In fact, by Proposition \ref{prop-convex-exam} and Corollary \ref{coro-closed-representation-0}, we can further obtain that $P(\cdot,i)\gg 0$ for all $i\in\mathcal{S}$.

On the other hand, the associated BSDE \eqref{eta} for problem \eqref{min} becomes:
\begin{equation}\label{eta-exam}
\left\{
\begin{array}{l}
d\eta(s)=\Big\{\big[\mathcal{L}(s,\alpha(s);\mathbf{P})\mathcal{N}(s,\alpha(s);\mathbf{P})^{-1}B(s,\alpha(s))^{\top}-r(s,\alpha(s))\big]\eta(s)\\[2mm]
\qquad\quad+\sum_{j=1}^{L}\lambda_{j}(s)\mathcal{L}(s,\alpha(s);\mathbf{P})\mathcal{N}(s,\alpha(s);\mathbf{P})^{-1}\gamma_{j}(s,\alpha(s))^{\top}z_{j}(s)\Big\}dt
+\sum_{j=1}^{L}z_{j}(s)d \widetilde{N}_{j}(s),\\[2mm]
\eta(T)=c.
\end{array}
\right.
\end{equation}
The following result provides a visual result for the solution to BSDE \eqref{eta}.
\begin{proposition}\label{prop-H}
Let $\mathbf{H}(\cdot)\in \mathcal{D}(C(0,T;\mathbb{R}))$ be the solution to linear differential equations:
\begin{equation}\label{H}
\left\{
\begin{array}{l}
\dot{H}(s,i)=\big\{\big[\mathcal{L}(s,i;\mathbf{P})\mathcal{N}(s,i;\mathbf{P})^{-1}B(s,i)^{\top}-r(s,i)\big]H(s,i)\\[2mm]
\qquad \qquad +\sum_{j=1}^{L}\pi_{ij}(s)\big[\mathcal{L}(s,i;\mathbf{P})\mathcal{N}(s,i;\mathbf{P})^{-1}\gamma_{j}(s,i)^{\top}-1\big]\big[H(s,j)-H(s,i)\big],\\[2mm]
H(T,i)=1,\quad i\in\mathcal{S}.
\end{array}
\right.
\end{equation}
Then the unique solution $(\eta(\cdot),\mathbf{z}(\cdot))\in \mathcal{S}_{\mathbb{F}}^{2}(0,T;\mathbb{R})\times\mathcal{D}(L_{\mathcal{P}}^{2}(0,T;\mathbb{R}))$ to BSDE \eqref{eta-exam} can be represented as
\begin{equation}\label{eta-representation}
  \eta(s)=cH(s,\alpha(s)),\quad z_{j}(s)=c\big[H(s,j)-H(s,\alpha(s-))\big],\quad j\in\mathcal{S},\quad a.e.\text{ }s\in[0,T],\quad a.s..
\end{equation}
\end{proposition}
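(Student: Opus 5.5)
The plan is a direct verification: define the candidate pair by \eqref{eta-representation}, show it solves BSDE \eqref{eta-exam}, and then invoke uniqueness. The linear ODE system \eqref{H} has bounded coefficients. This holds because $\mathcal{N}(\cdot,i;\mathbf{P})\gg 0$ and $\mathbf{P}(\cdot)$ is continuous, hence bounded, so $\mathcal{L}\mathcal{N}^{-1}$ is bounded. The system therefore admits a unique solution $\mathbf{H}(\cdot)\in\mathcal{D}(C(0,T;\mathbb{R}))$. Likewise, \eqref{eta-exam} is a linear BSDE driven by the compensated Markov jump martingales, with bounded coefficients and square-integrable terminal value $c$. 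By standard BSDE theory (e.g.\ \cite{cohen2015stochastic}) it has a unique solution in $\mathcal{S}_{\mathbb{F}}^{2}(0,T;\mathbb{R})\times\mathcal{D}(L_{\mathcal{P}}^{2}(0,T;\mathbb{R}))$, and the $\zeta$-component vanishes, since $\sigma=0$, $b=q=\rho=0$ in this specialization. It thus suffices to check that the candidate solves \eqref{eta-exam}.

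The key step is Itô's rule for $cH(s,\alpha(s))$, exactly as for $\Delta(s,\alpha(s))$ in the proof of Proposition \ref{prop-M0}:
$$
d\big[cH(s,\alpha(s))\big]=c\Big\{\dot H(s,\alpha(s))+\sum_{j=1}^{L}\lambda_{j}(s)\big[H(s,j)-H(s,\alpha(s))\big]\Big\}ds+\sum_{j=1}^{L}c\big[H(s,j)-H(s,\alpha(s-))\big]d\widetilde N_{j}(s).
$$
The martingale part immediately identifies $z_{j}(s)=c[H(s,j)-H(s,\alpha(s-))]$, which is bounded and predictable. The terminal condition $cH(T,\alpha(T))=c$ also holds.

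Next, I match the drifts. I substitute \eqref{H} at $i=\alpha(s)$, using $\lambda_j(s)=\pi_{\alpha(s)j}(s)$ for $j\neq\alpha(s)$ and noting that the $j=\alpha(s)$ terms vanish because $H(s,j)-H(s,\alpha(s))=0$ there. This gives the drift
$$
c\big[\mathcal{L}\mathcal{N}^{-1}B^{\top}-r\big]H(s,\alpha(s))+c\sum_{j}\lambda_j(s)\mathcal{L}\mathcal{N}^{-1}\gamma_j^{\top}\big[H(s,j)-H(s,\alpha(s))\big],
$$
where the $-1$ term in \eqref{H} cancels the compensator term $\sum_j\lambda_j[H(s,j)-H(s,\alpha(s))]$. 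This is precisely the drift of \eqref{eta-exam} evaluated at $\eta=cH(s,\alpha(s))$ and $z_j=c[H(s,j)-H(s,\alpha(s))]$. Here we use that $\alpha(s)=\alpha(s-)$ for a.e.\ $s$, so inside $ds$-integrals we may replace $\alpha(s-)$ by $\alpha(s)$.

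The main point requiring care is exactly this bookkeeping. First, the sign convention of \eqref{eta-exam} must agree with that of \eqref{eta} once the scalar specialization is made: there $A=r$, $B=B$, $C=0$, $D=\sigma$, $E_j=0$, $F_j=\gamma_j$, and the leading minus sign combines with $A^\top-\mathcal{L}\mathcal{N}^{-1}B^\top$. Second, the compensator from the jumps of $\alpha$ must cancel against the ``$-1$'' in \eqref{H}. Once these are checked, uniqueness of the BSDE solution yields \eqref{eta-representation}.
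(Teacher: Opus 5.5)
Your proposal is correct and follows the paper's own argument. You apply It\^o's rule to $cH(s,\alpha(s))$, read off $z_j$ from the $d\widetilde{N}_j$ part, and use \eqref{H} so that its $-1$ term cancels the compensator $\sum_{j}\lambda_j(s)[H(s,j)-H(s,\alpha(s))]$ and the drift matches \eqref{eta-exam}; you then conclude by uniqueness.

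One slip: $\zeta\equiv 0$ does not follow from the vanishing of the inhomogeneous terms alone, since the volatility, playing the role of $D$, still multiplies $\zeta$ in the honest specialization of \eqref{eta}. It follows instead from the deterministic terminal value $c$, which is exactly what your verification-plus-uniqueness argument establishes.
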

\begin{proof}
  Clearly, by \eqref{H}, we have
$$
\left\{
\begin{array}{l}
d\big[cH(s,\alpha(s))\big]=\big\{\big[\mathcal{L}(s,\alpha(s);\mathbf{P})\mathcal{N}(s,\alpha(s);\mathbf{P})^{-1}B(s,\alpha(s))^{\top}-r(s,\alpha(s))\big]cH(s,\alpha(s))\\[2mm]
\qquad \qquad +\sum_{j=1}^{L}\lambda_{j}(s)\mathcal{L}(s,\alpha(s);\mathbf{P})\mathcal{N}(s,\alpha(s);\mathbf{P})^{-1}\gamma_{j}(s,\alpha(s))^{\top}\big[cH(s,j)-cH(s,\alpha(s))\big]ds\\[2mm]
\qquad \qquad
+\sum_{j=1}^{L} \big[cH(s,j)-cH(s,\alpha(s-))\big]d\widetilde{N}_{j}(s),\quad s\in[0,T],\\[2mm]
cH(T,\alpha(T))=c,\quad i\in\mathcal{S}.
\end{array}
\right.
$$
The desired result follows from the unique solvability of BSDE \eqref{eta-exam}.
\end{proof}

By Theorem \ref{thm-closed-representation}, we have the following result directly.
\begin{theorem}\label{thm-min-problem}
   Suppose that the condition \eqref{uniformly-nondegenerate-condition} holds. Let $\mathbf{P}(\cdot)\in\mathcal{D}(C(0,T;\mathbb{R}))$ and $\mathbf{H}(\cdot)\in\mathcal{D}(C(0,T;\mathbb{R}))$ be the solution to CDREs \eqref{CDREs-exam} and ODEs \eqref{H}, respectively.
  Then the problem \eqref{min}  admits a unique open-loop optimal control:
  \begin{equation}\label{optimal-control-min}
    u^{*}(s;c)=-\mathcal{N}(s,\alpha(s-);\mathbf{P})^{-1}\big[\mathcal{L}(s,\alpha(s-);\mathbf{P})^{\top}X^{*}(s-;c)
 +c\widetilde{\rho}(s,\alpha(s-))\big],\quad a.s.,\text{ }a.e.\text{ }s\in[t,T],
  \end{equation}
  where $X^{*}(\cdot;c)$ is the solution to SDE:
  \begin{equation}\label{wealth-optimal}
\left\{
\begin{array}{l}
dX^{*}(s;c)=\big\{-B(s,\alpha(s))\mathcal{N}(s,\alpha(s);\mathbf{P})^{-1}\big[\mathcal{L}(s,\alpha(s);\mathbf{P})^{\top}X^{*}(s;c)
+c\widetilde{\rho}(s,\alpha(s))\big]\\[2mm]
\quad+r(s,\alpha(s))X^{*}(s;c)\big\}ds-\sigma(s,\alpha(s))\mathcal{N}(s,\alpha(s);\mathbf{P})^{-1}\big[\mathcal{L}(s,\alpha(s);\mathbf{P})^{\top}X^{*}(s;c)
+c\widetilde{\rho}(s,\alpha(s))\big]dW(s)\\[2mm]
\quad-\sum_{j=1}^{L}\gamma_{j}(s,\alpha(s-))\mathcal{N}(s,\alpha(s-);\mathbf{P})^{-1}\big[\mathcal{L}(s,\alpha(s-);\mathbf{P})^{\top}X^{*}(s-;c)
+c\widetilde{\rho}(s,\alpha(s-))\big]d\widetilde{N}_{j}(s),\\[2mm]
X^{*}(0;c)=x,\quad \alpha(0)=i,
\end{array}
\right.
\end{equation}
 and
 \begin{equation}\label{widetilde-rho-exam}
 \begin{array}{l}
  \widetilde{\rho}(s,i)=B(s,i))^{\top}H(s,i)+\sum_{j\neq i}^{L}\pi_{ij}(s)\gamma_{j}(s,i)^{\top}\big[H(s,j)-H(s,i)\big],\quad i\in\mathcal{S},\quad  a.s., \quad a.e.\text{ }s\in[t,T].
  \end{array}
  \end{equation}
   In this case, the value function of problem \eqref{min} is given by
  \begin{equation}\label{value-SLQ-exam}
   \begin{array}{l}
    J_{2}(x,i;u^{*}(\cdot;c),c)=P(0,i)x^{2}+2cH(0,i)x-\kappa c^{2}+c^{2},
   \end{array}
  \end{equation}
  where
 \begin{equation}\label{kappa}
  \kappa:=\mathbb{E}\Big[\int_{0}^{T}
  \sum_{j=1}^{L}\lambda_{j}(s)\big[\big<\mathcal{N}(s,\alpha(s);\mathbf{P})^{-1}\widetilde{\rho}(s,\alpha(s)),\widetilde{\rho}(s,\alpha(s))\big> ds\Big].
  \end{equation}
\end{theorem}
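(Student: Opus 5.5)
This is a specialization of Theorem \ref{thm-closed-representation} to problem \eqref{min}. Problem \eqref{min} is an instance of Problem (M-SLQ) with data
\[
n=1,\quad A=r,\quad C=0,\quad E_j=0,\quad B=B,\quad D=\sigma,\quad F_j=\gamma_j,
\]
all inhomogeneous terms $b,\sigma(\cdot)$-forcing, $\gamma_j$-forcing, $q,\rho$ zero, $Q=S=R=0$, $G(T,i)=1$ and $g=c$. Indeed $\mathbb{E}[(X(T)+c)^2]=\mathbb{E}[X(T)^2+2cX(T)]+c^2$, so $J_2=J+c^2$ with $J$ the cost \eqref{cost}.

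\textbf{Step 1: existence and uniqueness of the optimal control.} By Proposition \ref{prop-convex-exam} (with $x=0$), the uniform convexity condition \eqref{uniformly-convex-condition} holds. Theorem \ref{thm-CDRE-uniform-convex} and the subsequent corollary then give a unique strongly regular solution $\mathbf{P}$ of \eqref{CDREs-exam}. They also give unique open-loop solvability, which is the uniqueness claim.

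\textbf{Step 2: the closed-loop form.} The BSDE \eqref{eta} reduces to \eqref{eta-exam}, whose solution is $\eta=cH(s,\alpha(s))$, $\zeta=0$, $z_j=c[H(s,j)-H(s,\alpha(s-))]$ by Proposition \ref{prop-H}. Substitute these into \eqref{widetilde-rho}, using that the forcing terms vanish and $\lambda_j(s)=\pi_{\alpha(s-)j}I_{\{\alpha(s-)\neq j\}}$. This yields
\[
\widetilde\rho(s)=c\Big(B^\top H(s,\alpha(s))+\sum_{j\neq\alpha(s)}\pi_{\alpha(s)j}\gamma_j^\top[H(s,j)-H(s,\alpha(s))]\Big)=c\,\widetilde\rho(s,\alpha(s)),
\]
with $\widetilde\rho(s,i)$ as in \eqref{widetilde-rho-exam}. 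Then \eqref{closed-loop-representation} and \eqref{state-optimal} become \eqref{optimal-control-min} and \eqref{wealth-optimal} directly.

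\textbf{Step 3: the value.} From \eqref{value-SLQ} with $t=0$, every integral term vanishes except $-\langle\mathcal{N}^{-1}\widetilde\rho,\widetilde\rho\rangle=-c^2\langle\mathcal{N}^{-1}\widetilde\rho(s,\alpha(s)),\widetilde\rho(s,\alpha(s))\rangle$. Since $\alpha(0)=i$ is deterministic, $\mathbb{E}\langle\eta(0),x\rangle=cH(0,i)x$. Adding $c^2$ gives
\[
P(0,i)x^2+2cH(0,i)x-\kappa c^2+c^2 .
\]
One point needs care: the displayed $\kappa$ in \eqref{kappa} contains a stray $\sum_j\lambda_j$. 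I would check it against the derivation, and I expect the correct constant to be $\kappa=\mathbb{E}\int_0^T\langle\mathcal{N}^{-1}\widetilde\rho,\widetilde\rho\rangle ds$.

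\textbf{Main obstacle.} The delicate step is the bookkeeping of the jump terms in Step 2. One must carefully match $\lambda_j(s)$ with $\pi_{\alpha(s)j}$ (a.e. in $s$, $\alpha(s)=\alpha(s-)$). One must also confirm that $\mathcal{L}$, $\mathcal{N}$ in \eqref{notation-MLN} specialize exactly to \eqref{notation-MLN-exam}: with $E_j=0$, the term $[P(j)-P(i)]F_j$ survives in $\mathcal{L}$ and $P(j)\gamma_j^\top\gamma_j$ survives in $\mathcal{N}$. Everything else follows by citation.
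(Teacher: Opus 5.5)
Your proposal is correct and matches the paper's argument: the paper obtains this theorem by specializing Theorem \ref{thm-closed-representation}, with strong regularity from Proposition \ref{prop-convex-exam} and Theorem \ref{thm-CDRE-uniform-convex}, $\eta$ from Proposition \ref{prop-H}, and $J_2=J+c^2$ with $g=c$, exactly as you set it up. Your reading of \eqref{kappa} is also right: in \eqref{value-SLQ} the factor $\sum_{j}\lambda_{j}(s)$ multiplies only the $\gamma_j$-forcing terms, which vanish here, so the correct constant is $\kappa=\mathbb{E}\int_{0}^{T}\langle\mathcal{N}(s,\alpha(s);\mathbf{P})^{-1}\widetilde{\rho}(s,\alpha(s)),\widetilde{\rho}(s,\alpha(s))\rangle ds$, and the extra $\sum_{j}\lambda_{j}(s)$ in the displayed formula is a typo.
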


\begin{remark}\rm
  If $\gamma_{kj}(\cdot,i)\equiv 0$ for all $k=1,2,\cdots,m$ and $i,\text{ }j\in\mathcal{S}$, then the corresponding CDREs \eqref{CDREs-exam} and ODEs \eqref{H} are respectively degenerated into
  \begin{equation}\label{CDREs-exam-2}
    \left\{
 \begin{array}{l}
 -\dot{P}(s,i)=\big[2r(s,i)-B(s,i)\big(\sigma(s,i)^{\top}\sigma(s,i)\big)^{-1}B(s,i)^{\top}\big]P(s,i)+\sum_{j=1}^{L}\pi_{ij}(s)P(s,j)
 ,\quad s\in[0,T],\\[2mm]
 P(T,i)=1,\quad i\in\mathcal{S},
 \end{array}
 \right.
  \end{equation}
  and
  \begin{equation}\label{H-2}
    \left\{
 \begin{array}{l}
 \dot{H}(s,i)=\big[B(s,i)\big(\sigma(s,i)^{\top}\sigma(s,i)\big)^{-1}B(s,i)^{\top}-r(s,i)\big]H(s,i)-\sum_{j=1}^{L}\pi_{ij}(s)H(s,j)\quad s\in[0,T],\\[2mm]
 H(T,i)=1,\quad i\in\mathcal{S}.
 \end{array}
 \right.
  \end{equation}
  Let $\widehat{H}(\cdot,i)\triangleq P(\cdot,i)^{-1}H(\cdot,i)$, $i\in\mathcal{S}$. Then it follows from \eqref{CDREs-exam-2}-\eqref{H-2} that
    \begin{equation}\label{H-3}
    \left\{
 \begin{array}{l}
 \dot{\widehat{H}}(s,i)=r(s,i)\widehat{H}(s,i)-\frac{1}{P(s,i)}\sum_{j=1}^{L}\pi_{ij}(s)P(s,j)\big[\widehat{H}(s,j)-\widehat{H}(s,i)\big]\quad s\in[0,T],\\[2mm]
 \widehat{H}(T,i)=1,\quad i\in\mathcal{S}.
 \end{array}
 \right.
  \end{equation}
Consequently, the optimal feedback control of problem \eqref{min} for this special case is given by
$$
u^{*}(s,x,i;c)=-\big[\sigma(s,i)^{\top}\sigma(s,i)\big]^{-1}B(s,i)^{\top}\big[x+c\widehat{H}(s,i)\big],
$$
which is consistent with the result provided in Zhou and Yin \cite{Zhou-2003-MV}.
\end{remark}

%\subsection{Efficient frontier}
%In this section,
In the following, we proceed to derive the efficient frontier for the original MV portfolio selection problem \eqref{MV}. Firstly, based on Theorem \ref{thm-min-problem}, we have
\begin{equation}\label{J3-star}
\begin{aligned}
 J_{3}(x,i;\theta)&=J_{2}(x,i;u^{*}(\cdot;\theta-z),\theta-z)-\theta^{2}\\
 &=P(0,i)x^{2}+2(\theta-z)H(0,i)x-\kappa (\theta-z)^{2}+(\theta-z)^{2}-\theta^{2}\\
 %&=-\kappa\big[\theta-\frac{1}{\kappa}\big(H(0,i)x+\kappa z-z\big)\big]^{2}
 %-\frac{1}{\kappa}\big[H(0,i)x+\kappa z-z\big]^{2}+P(0,i)x^{2}-2H(0,i)xz+(1-\kappa)z^{2}.
 &=-\kappa(\theta-z)^{2}+2\big[H(0,i)x-z\big](\theta-z)+P(0,i)x^{2}-z^{2}.
 \end{aligned}
\end{equation}
It follows from assumption \eqref{feasibility-condition} and Proposition \ref{prop-feasibility} that the Problem (MV) is feasible for any $z\in\mathbb{R}$. Additionally, one can easily verify that the Problem (MV) without the constraint $\mathbb{E}X^{u}(T;x,i)=z$ has a finite optimal value, which implies that the Problem (MV) is also finite for any $z\in\mathbb{R}$. Adopting the well-known duality theorem, we obtain the optimal value of Problem (MV) given by
\begin{equation}\label{J3-value}
 J_{MV}^{*}(x,i)=\sup_{\theta\in\mathbb{R}} J_{3}(x,i;\theta)>-\infty.
\end{equation}
%Noting that $J_{3}(x,i;\theta)$ is a quadratic function in $\theta-z$, it follows from the finiteness of the supremum value of this quadratic function that $\kappa\geq 0.$
It follows from $\mathcal{N}(\cdot; P,i)\gg 0$ for all $i\in\mathcal{S}$ and equation \eqref{kappa} that $\kappa\geq 0$. If $\kappa=0$, then equations \eqref{J3-star} and \eqref{J3-value} yield $z=H(0,i)x$ for all $z\in\mathbb{R}$, which is a contradiction. Hence, we must have $\kappa>0$.
Consequently, let $\theta^{*}$ be the maximizer of $J_{3}(x,i;\theta)$. Then it must satisfy
$$
\theta^{*}-z=\frac{1}{\kappa}\big[H(0,i)x-z\big],
$$
which implies that
\begin{equation}\label{theta-star}
\theta^{*}=\frac{1}{\kappa}\big[H(0,i)x-z\big]+z.
\end{equation}
In this case, the efficient portfolio corresponding to $z$ is
\begin{equation}\label{efficient-portfolio}
  u^{*}(s)=-\mathcal{N}(s,\alpha(s-);\mathbf{P})^{-1}\big[\mathcal{L}(s,\alpha(s-);\mathbf{P})^{\top}X^{*}(s-;\theta^{*}-z)
 +(\theta^{*}-z)\widetilde{\rho}(s,\alpha(s-))\big],\quad a.s.,\text{ }a.e.
\end{equation}
where $X^{*}(\cdot;\theta^{*}-z)$ defined in \eqref{wealth-optimal} with $c=\theta^{*}-z$ and $\widetilde{\rho}(\cdot,i)$ defined in \eqref{widetilde-rho-exam}. Furthermore, the optimal value of Problem (MV) is given by
\begin{equation}\label{MV-value}
\begin{aligned}
 Var X^{u^{*}}(T;x,i)&=-\kappa(\theta^{*}-z)^{2}+2\big[H(0,i)x-z\big](\theta^{*}-z)+P(0,i)x^{2}-z^{2}\\
 &=\frac{1}{\kappa}\big[H(0,i)x-z\big]^{2}+P(0,i)x^{2}-z^{2}.
 \end{aligned}
\end{equation}

To sum up, we have the main result of this section.

\begin{theorem}\label{thm-min-problem-2}
 Suppose the conditions \eqref{uniformly-nondegenerate-condition} and  \eqref{feasibility-condition} hold. Let $\mathbf{P}(\cdot)\in\mathcal{D}(C(0,T;\mathbb{R}))$ and $\mathbf{H}(\cdot)\in\mathcal{D}(C(0,T;\mathbb{R}))$ be the solution to CDREs \eqref{CDREs-exam} and ODEs \eqref{H}, respectively. Then the Problem (MV) admits a unique efficient portfolio for any $z\in\mathbb{R}$ and the corresponding efficient point is given by \eqref{efficient-portfolio}-\eqref{MV-value}.

\end{theorem}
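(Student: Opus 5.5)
The plan is to assemble the result from the pieces already established in this section, via the Lagrangian duality reduction \eqref{J1}--\eqref{efficient-strategy-theta}. First, by \eqref{feasibility-condition} and Proposition \ref{prop-feasibility}, Problem (MV) is feasible for every $z\in\mathbb{R}$. Since $J_{MV}\ge 0$, it is also finite. Because the constraint $\mathbb{E}[X^{u}(T)]=z$ is affine in $u$ and the objective is a convex quadratic functional of $u$ (the state is linear in $u$), the Lagrangian duality theorem applies. This gives $J^{*}_{MV}(x,i)=\sup_\theta \min_u J_1(x,i;u,\theta)=\sup_\theta J_3(x,i;\theta)$, as recorded in \eqref{J3-value}.

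For the inner minimization I would invoke Theorem \ref{thm-min-problem}. Its hypotheses reduce to the uniform convexity verified in Proposition \ref{prop-convex-exam} under \eqref{uniformly-nondegenerate-condition}, together with Theorem \ref{thm-CDRE-uniform-convex}, which supplies the strongly regular solution $\mathbf{P}$ of \eqref{CDREs-exam}. Proposition \ref{prop-H} supplies the representation of $\eta$ through $\mathbf{H}$. This gives, for each $c=\theta-z$, a unique minimizer $u^{*}(\cdot;c)$ and the value \eqref{value-SLQ-exam}. Hence $J_3$ is the explicit quadratic \eqref{J3-star} in $\theta-z$.

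Next I must show $\kappa>0$ so that $J_3$ is strictly concave with a unique maximizer. Nonnegativity of $\kappa$ follows from $\mathcal{N}\gg 0$. If $\kappa=0$, then $J_3$ would be affine in $\theta$, with slope $2[H(0,i)x-z]$. Finiteness of the supremum forces $z=H(0,i)x$, and this cannot hold for every $z$ since the problem is feasible and finite for all $z$. So $\kappa>0$. The first-order condition then gives $\theta^{*}$ in \eqref{theta-star}, and substituting it back yields the optimal value \eqref{MV-value}. Since $\mathbb{E}[X^{u^*}(T)]=z$, this optimal value equals $\operatorname{Var}X^{u^{*}}(T)$.

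The remaining and most delicate step is to confirm that $u^{*}(\cdot;\theta^{*}-z)$ actually attains the constrained minimum and is unique. I would argue directly rather than rely on abstract duality. Take any feasible $u$. Then $J_{MV}(u)=J_1(u,\theta^{*})\ge J_1(u^{*}(\cdot;\theta^{*}-z),\theta^{*})=J_3(\theta^{*})$. For the bound to be attained, the candidate must itself be feasible. For this I use the envelope-type identity
\[
\frac{d}{d\theta}J_3(\theta)=2\big(\mathbb{E}[X^{u^{*}(\cdot;\theta-z)}(T)]-z\big),
\]
which follows because $u^{*}(\cdot;c)$ is affine in $c$ and is a minimizer. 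At $\theta^{*}$ the derivative vanishes, so the constraint holds. Uniqueness follows from the strict convexity in $u$ supplied by uniform convexity: any other feasible optimizer would also minimize $J_1(\cdot,\theta^{*})$, whose minimizer is unique by Theorem \ref{thm-min-problem}. Justifying this envelope identity cleanly is where I expect the main technical effort.
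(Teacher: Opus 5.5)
Your proposal is correct and follows the paper's route: Lagrangian relaxation, Theorem~\ref{thm-min-problem} for the inner problem, the explicit quadratic \eqref{J3-star}, $\kappa>0$ by contradiction, and maximization over $\theta$. Your direct check that $u^{*}(\cdot;\theta^{*}-z)$ is feasible and unique simply makes explicit what the paper takes from the duality theorem. The envelope identity you flag as the main effort is cheap: since $J_{1}(u,\theta)$ is affine in $\theta$ for fixed $u$, one has $J_{3}(\theta')\le J_{1}(u^{*}(\cdot;\theta-z),\theta')=J_{3}(\theta)+2(\theta'-\theta)\big(\mathbb{E}[X^{u^{*}(\cdot;\theta-z)}(T)]-z\big)$ for all $\theta'$, and differentiability of the explicit quadratic $J_{3}$ forces this slope to equal $J_{3}'(\theta)$, which vanishes at $\theta^{*}$.
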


%\bibliography{references}
%\bibliographystyle{siamplain}

\end{document}